\date{\today}
\newcommand{\M}{\mathbb{M}}
\renewcommand{\le}{\leqslant}
\renewcommand{\ge}{\geqslant}
\newcommand{\LL}{\mathbb{L}}
\newcommand{\cal}{\mathcal}
\newcommand{\PP}{\mathbb{P}}
\newcommand{\cc}{\mathbf{\mathbb{C}}}
\newcommand{\nn}{\mathbf{\mathbb{N}}}
\newcommand{\ord}{\operatorname{ord}}
\newcommand{\graph}{\operatorname{graph}}
\newcommand{\OO}{\operatorname{\mathcal{O}}}
\newcommand{\II}{\operatorname{\mathcal{I}}}
\newcommand{\JJ}{\operatorname{\mathcal{J}}}
\newcommand{\rk}{\operatorname{rank}}
\newtheorem{twB}{\textsc{Bertini theorem}}%[section]
\newtheorem{tw}{\textsc{Theorem}}[section]
\newtheorem{wn}[tw]{\textsc{Corollary}}
\newtheorem{lem}[tw]{\textsc{Lemma}}
\newtheorem{pr}[tw]{\textsc{Proposition}}
\newtheorem{fact}[tw]{\textsc{Fact}}
\theoremstyle{remark}
\newtheorem{exa}[tw]{\textsc{Example}}
\theoremstyle{remark}
\newtheorem{rem}[tw]{\textsc{Remark}}
\begin{document}
\baselineskip15pt

\title[Effective Bertini theorem]{Effective Bertini theorem\\
 and formulas for multiplicity\\ and the local {\L}ojasiewicz exponent}
\author{Tomasz Rodak, Adam R\'o\.zycki, Stanis{\l}aw Spodzieja}
\keywords{transversal intersection, multiplicity, Łojasiewicz exponent, finite mapping, effective formulas.}
\subjclass[2010]{32B10, 14Q99, 12Y99}

\address[Tomasz Rodak]{Faculty of Mathematics and Computer Science,
University of Lodz, S. Banacha 22,
90-238 {\L}\'od\'z, Poland}
\email{tomasz.rodak@wmii.uni.lodz.pl}

\address[Adam R\'o\.zycki]{Faculty of Mathematics and Computer Science,
University of Lodz, S. Banacha 22,
90-238 {\L}\'od\'z, Poland.}
\email{adam.rozycki@wmii.uni.lodz.pl}

\address[Stanis{\l}aw Spodzieja]{Faculty of Mathematics and Computer Science,
University of Lodz,
S. Banacha 22,
90-238 {\L}\'od\'z, Poland}
\email{stanislaw.spodzieja@wmii.uni.lodz.pl}

%\thanks{This research was partially supported by the Polish National Science Centre, grant 2012/07/B/ST1/03293.}
%This research was partially supported by the Polish OPUS Grant No 2012/07/B/ST1/03293.}

\begin{abstract}
The classical Bertini theorem on generic intersection of an algebraic set with hyperplanes states the following:
\emph{Let X be a nonsingular closed subvariety
of $\mathbb{P}^n_k$, where $k$ is an algebraically closed field. Then there exists a hyperplane $H\subset \mathbb{P}^n_k$ not containing $X$ and such that the scheme $H\cap X$ is regular at every point.  Furthermore, the set of hyperplanes with this property forms an
open dense subset of the complete linear system $|H|$ considered as a projective space. }
We will show that one can effectively indicate a finite family of hyperplanes $ H $ such that at least one of them satisfies the assertion of the Bertini theorem.
As an application of the method used in the proof we will give effective formulas for the multiplicity and the {\L}ojasiewicz exponent of polynomial mappings.
\end{abstract}
\maketitle

\section*{Introduction}

The classical Bertini theorem on generic intersection of an algebraic set with hyperplanes is as follows (see   \cite[Theorem II.8.18]{H}):

\begin{twB}%[Bertini's Theorem]
\label{Bertiniclasical}
Let X be a nonsingular closed subvariety
of $\mathbb{P}^m_k$, where $k$ is an algebraically closed field. Then there exists a hyperplane $H\subset \mathbb{P}^m_k$ not containing $X$ and such that the scheme $H\cap X$ is regular at every point. (In fact, %we will see later (III, 7.9.1) that
if $\dim X \ge 2$, then $H \cap X$ is connected, hence irreducible, and so $H\cap X$ is a nonsingular variety.) Furthermore, the set of hyperplanes with this property forms an
open dense subset of the complete linear system $|H|$ considered as a projective space.
\end{twB}

This theorem is one of the fundamental tools in algebraic geometry. For more details we refer to  \cite{Kleiman}.

We will show that one can effectively indicate a finite family of hyperplanes $H$ such that  at least one of them satisfies the assertion of the Bertini theorem (see Theorems \ref{Bertini}, \ref{BertiniProj} and \ref{Bertinimultiple}). Theorems \ref{Bertini} and \ref{Bertinimultiple} are formulated in terms of algebraic cones, i.e., algebraic sets defined by homogeneous polynomials. The most general is Theorem \ref{Bertinimultiple} which says (in the projective case)  the following:
\emph{Let $d,m,q,s$ be positive integers such that $m\ge q \ge s$, let
$$
\ell=  d^{m-q}[(m-q)(d-1)+q]+mq-1,
$$
 and let $N_j:\cc^{(m+1)q}\to\cc$, $1\le j\le \ell$, be a system of independent linear functions, i.e., any $(m+1)q$ of these functions are linearly independent. Then
\begin{multline*}
\!\!\! K_{s,j_1,\ldots,j_{(m+1)q-1}}=\{x\in\PP^m:\exists_{a=(a_1,\ldots,a_{q})\in (\PP^m)^{q}}\;N_{j_1}(a)=\cdots =N_{j_{(m+1)q-1}}(a)=0,\\  a_1x^T=\cdots=a_{s}x^T=0\}\quad\hbox{for } 1\le j_1<\cdots<j_{(m+1)q-1}\le \ell
\end{multline*}
is a system of linear subspaces of dimension $m-s$ such that for any irreducible and smooth algebraic set $V\subset \PP^m$ of dimension $q$ and degree  $d$, there are $1\le j_1<\cdots<j_{(m+1)q-1}\le \ell$ such that
 the intersection $X=V\cap K_{s,j_1,\ldots,j_{(m+1)q-1}}$ is transversal at any point $x\in X$ and the set  $X$ is smooth. If moreover $q \ge s+1$ then $X$ is irreducible.}

As a consequence of the Bertini theorem we obtain Corollary \ref{Bertinimultipleextra} which says that
\emph{for any  algebraic set $V=V(f_1,\ldots,f_r)\subset \cc^m$ of pure dimension $ q$, where $f_j\in\cc[x]$, $\deg f_j\le d$ for $1\le j\le r$, we have
$$
\deg _0V=\max_{1\le j_1<\cdots<j_{m(q-1)-1}\le \ell}\deg_0 (V\cap K_{s,j_1,\ldots,j_{m(q-1)-1}}) \quad\hbox{for any $1\le s\le q$.}
$$
}

A crucial role in the proof of the Bertini theorem is played by the following observation (see Lemma \ref{arange0}): \emph{Let $m,q,d$ be positive integers with $m\ge 2$, $m\ge q$, let $\ell=d(m-q)+q$, and let $N_j:\cc^m\to\cc$, $1\le j\le \ell$, be a system of independent linear functions. Then for any algebraic cone $C_0\subset \cc^m$ with $\dim C_0\le q$ and with total degree $\delta(C_0)\le d$, there are $1\le i_1<\cdots<i_q\le \ell$ such that}
\begin{equation*}%\label{arrangeN}
C_0\cap V(N_{i_1},\ldots,N_{i_q})=\{0\}.% \quad\hbox{for some }i\in\{1,\ldots,s\}.
\end{equation*}
The number $\ell$ in the above observation is optimal in terms of $d$, $m$ and $q$ (see Remark \ref{lemthebest}).

There are a lot of numerical invariants that could be associated with a polynomial map $f$.
In this note we are interested in two of them, namely the multiplicity and the Łojasiewicz exponent.
The multiplicity is a basic biholomorphic invariant characterizing the singularity of $f$ at zero (see \cite{ATW,Fulton,Lo3,M,Serre}). As a corollary from the above observation we will prove the following effective formula for the multiplicity $i_0(f)$ of a finite polynomial mapping $f $ (see Theorem \ref{twmultiplicityformula}):
%\begin{tw}\label{twmultiplicityformula}
\emph{Let $\ell=  d^{n}(m-n)+n$.
For any independent system $L_j:\cc^m\to\cc$, $1\le j\le \ell$,
and any polynomial mapping $f\colon\cc^n\to\cc^m$ of degree $d$,  finite at $0$, we have}
$$
i_0(f)=\min_{1\le i_1<\cdots<i_n\le\ell}% d^{n-1}(m-n)+n}
\dim_\cc\OO/(L_{i_1}\circ f,\ldots,L_{i_n}\circ f).
$$
%\end{tw}
Note that in the case $m>n$, we only have $i_0(f)\ge \dim_\cc\OO/(f_1,\ldots,f_m)$.

For the {\L}ojasiewicz exponent (see \cite{Lo1,Lo2,KS1,KSS}, the definition is given in Section~\ref{Lojexpsect}),  we obtain the following formulas (see Corollaries \ref{reduct1} and \ref{reduct11}):
\emph{Let $m\ge n$ be positive integers, let $\ell=d^n(m-n)+n$, and let $L_j:\cc^m\to\cc$, $1\le j\le \ell$, be a system of independent linear functions. Then for any polynomial mapping $f:(\cc^n,0)\to(\cc^m,0)$ finite at $0$ with $\deg f\le d$ we have
$$
{\cal{L}}_0(f)=\min_{1\le i_1<\cdots<i_n\le \ell}{\cal{L}}_0((L_{i_1},\ldots,L_{i_n})\circ f)=\min_{1\le i_1<\cdots<i_n\le \ell}{\cal{L}}_0(H_{f,(L_{i_1},\ldots,L_{i_n})}),
$$
where $
H_{f,L}(z)=L\circ f(z)+(z_1^{{d^n}+1},\ldots,z_n^{{d^n}+1})$, $z=(z_1,\ldots,z_n)\in\cc^n$
for a linear mapping $L:\cc^m\to\cc^n$.}
Then by using  P{\l}oski's formula for the {\L}ojasiewicz exponent of mappings $\cc^n\to\cc^n$ (see \cite{P2,RS}) we obtain an effective formula for the exponent in terms of orders of polynomials $P_{f,L,N}\in\cc[y_1,\ldots,y_n,t]$ defining the sets of values of the mappings
$$%\begin{equation}\label{1}
\Phi_{f,L,N}(z)=(H_{f,L}(z),N(z)), %\qquad N\in \LL(n,a).
$$
where $N:\cc^n\to\cc$ is a linear function.
More precisely, we obtain the following formula (see Theorem \ref{maintheorem1}): \emph{Let  $\ell_L=d^n(m-n)+n$, $\ell_N=n+[(d^n+1)^n-1]n(n-1)$, and let
$\mathbb{I}_{L}=\{{\bf s}=(s_1,\ldots,s_n)\in \nn^n:  1\le s_1<\cdots<s_n\le \ell_L\}$. Then for any sequences $N_i:\cc^n\to\cc$, $1\le i\le \ell_N$, $L_j:\cc^m\to\cc$, $1\le j\le \ell_L$, of independent linear functions we have
$$
{\cal{L}}_0(f)=\min_{{\bf s}\in \mathbb{I}_L}\max_{1\le i\le\ell_N}\frac{1}{\Delta (P_{f,L_{\bf s},N_i})},
$$
where
$$%\begin{equation}
  \Delta(P)=\min_{j=0}^r\frac{\ord_y P_j}{r+1-j}
  %\label{e101}
$$
if $P(y,t)=\sum_{j=0}^\infty P_j(y)t^j\in \cc\{y,t\}$ is a  power
series regular of order $r+1$ in~$t$.}

%\begin{rem}\label{rem2tomainthm1}
In \cite[Theorem 7]{RS} we obtained a similar result to Theorem \ref{maintheorem1}. There, however, we used the linear mappings $L$ and $N$ with generic coefficients. %, i.e. their coefficients are variables.
 An important limitation in using \cite[Theorem 7]{RS} is a quite large number, $n(m + 1)$, of additional variables (i.e., coefficients of $L$ and $N$) needed to determine the polynomial $P_{f,L,N}$. This results in an exponential extension of the time needed for calculations. Theorem \ref{maintheorem1} provides a formula for the Łojasiewicz exponent without using additional variables. However, we need to repeat the calculations $\ell_N\binom{\ell_L}{n}$ times. From the point of view of the formula's effectiveness and algorithmic implementation, this is a significant simplification  and  reduces the time of computer calculations considerably.
%\end{rem}

\section{Preliminaries}

\subsection{Basic notations}
By $f\colon(\cc^m,0)\to(\cc^n,0)$ we denote a mapping defined in a neighbourhood of $0 \in \cc^m$ with values in $\cc^n$ such that $f(0)=0$.
A holomorphic mapping $f\colon(\cc^m,0)\to(\cc^n,0)$ is called \emph{finite at $0$} if $0$ is an isolated point of $f^{-1}(0)$.

We denote by $\cc[x]$  the ring of complex polynomials in  $x=(x_1,\ldots,x_m)$. The degree of a polynomial $f\in \cc[x]$ is denoted by $\deg f$. We put $\deg 0=-\infty$. Let $f=(f_1,\ldots,f_r):\cc^m\to\cc^r$ be a polynomial mapping, i.e., $f_j\in\cc[x]$ for $1\le j\le r$. By the degree of $f$, denoted  by $\deg f$, we mean $\max\{\deg f_j:1\le j\le r\}$.

For any set $\II\subset \cc[x]$,  we denote by $V(\II)\subset \cc^m$ the set of common zeros of all polynomials $f\in\II$;
%% ??
any set of the form $V(\II)$ is called \emph{algebraic}. We put $V(\emptyset)=\cc^m$. By the Hilbert basis theorem $V(\II)=V(\JJ)=V(f_1,\ldots,f_r)$ for some polynomials $f_j\in\cc[x]$, $1\le j\le r$, belonging to the ideal $\JJ\subset \cc[x]$ generated by $\II$. If the polynomials $f_j$, $1\le j\le r$, are homogeneous, we call  the set $V(f_1,\ldots,f_r)$ an \emph{algebraic cone}.

 We denote by $\dim V$ the dimension of an algebraic (or locally analytic) set $V\subset \cc^n$. We set $\dim\emptyset=-1$. Let $\dim_a V$  denote the dimension of $V$ at $a\in\cc^n$, i.e., the dimension of the germ of the set $V$ at $a$. The set $V$ is called \emph{pure dimensional} if all irreducible components of $V$ have the same dimension.

\subsection{Tangent cone}
Let $C_0(V)$ be the \emph{tangent cone} to a set $V\subset \cc^m$ at $0$ in the sense of Whitney \cite{Whitney}, i.e., the set of vectors $w\in\cc^m$ for which there are sequences $p_\nu\in V$ with $p_\nu\to 0$ and $\alpha_\nu\in\cc$ such that $\alpha_\nu p_\nu\to w$ as $\nu\to\infty$.  We have the following fact (cf. \cite[Theorem 8.10 and Lemma 8.11]{Whitney}).

\begin{fact}\label{factpropertiestangcone}
Let $V\subset \cc^m$ be an analytic set in a neighbourhood of the origin
 with $0\in V$. Then $C_0(V)$ is an algebraic cone and $\dim C_0(V)=\dim_0V$. If $0$ is a simple point of $V$ then $C_0(V)$ is the tangent space $T_0(V)$ to $V$ at $0$.
\end{fact}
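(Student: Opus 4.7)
The plan is to identify $C_0(V)$ with the zero set of an explicit homogeneous ideal. Let $\II\subset \cc\{x_1,\ldots,x_m\}$ denote the ideal of convergent power series that vanish on the germ of $V$ at $0$, and let $\II^{*}\subset \cc[x_1,\ldots,x_m]$ be the homogeneous ideal generated by the initial (lowest-degree homogeneous) forms $f^{*}$ of all $f\in \II$. I will show
\begin{equation*}
C_0(V)=V(\II^{*}).
\end{equation*}
This equality immediately gives that $C_0(V)$ is an algebraic cone, since $V(\II^{*})$ is defined by homogeneous polynomials.

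The easy inclusion $C_0(V)\subset V(\II^{*})$ is a direct computation with the defining sequences. Suppose $p_\nu\in V$, $p_\nu\to 0$, and $\alpha_\nu p_\nu\to w$. If $w=0$ there is nothing to show. Otherwise $|\alpha_\nu|\to\infty$, and for any $f\in\II$ with initial form $f^{*}$ of degree $d$ one writes $f=f^{*}+g$ with $g$ vanishing to order $\ge d+1$. Then $\alpha_\nu^{d}f(p_\nu)=f^{*}(\alpha_\nu p_\nu)+\alpha_\nu^{d}g(p_\nu)$; the left-hand side is zero, the last term tends to $0$ since $|\alpha_\nu|\cdot\|p_\nu\|$ is bounded while one further factor of $p_\nu$ is eaten by $g$, and by continuity the middle term tends to $f^{*}(w)$. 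Hence $f^{*}(w)=0$ for every $f\in\II$, i.e.\ $w\in V(\II^{*})$.

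The reverse inclusion $V(\II^{*})\subset C_0(V)$ is the main obstacle and is the substantive content of Whitney's Theorem 8.10. I would establish it by a curve selection argument applied to the blow-up $\pi\colon\widetilde{\cc^m}\to \cc^m$ of the origin: the strict transform $\widetilde{V}$ is analytic near $\pi^{-1}(0)$, and one checks that the trace of $\widetilde{V}$ on the exceptional divisor is precisely the projectivization of $V(\II^{*})$. Picking $w\in V(\II^{*})\setminus\{0\}$ and a point of $\widetilde{V}$ over $[w]\in\PP^{m-1}$, the analytic curve selection lemma produces a real-analytic arc on $\widetilde{V}$ through this point, and pushing it back to $V$ and rescaling yields sequences $p_\nu\in V$, $\alpha_\nu\in\cc$ with $\alpha_\nu p_\nu\to w$.

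With $C_0(V)=V(\II^{*})$ established, the dimension assertion reduces to a commutative-algebra statement: the associated graded ring of the analytic local ring $\cc\{x\}/\II$ with respect to its maximal ideal is $\cc[x]/\II^{*}$, and passing to the associated graded preserves Krull dimension; the latter equals $\dim_0 V$ by definition. For the last assertion, if $0$ is a simple point, then after a linear change of coordinates $V$ is locally the graph of a holomorphic map vanishing to order $\ge 1$ at $0$, so $\II$ is generated by functions whose initial forms cut out exactly $T_0(V)$; hence $C_0(V)=V(\II^{*})=T_0(V)$.
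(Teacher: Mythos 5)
The paper gives no proof of this Fact; it is stated with a pointer to Whitney's Theorem~8.10 and Lemma~8.11, so there is no argument in the text to compare against. Your proposal in effect reconstructs the content of the cited reference: the identification $C_0(V)=V(\II^{*})$, with $\II^{*}$ the ideal of initial forms of the local ideal $\II$ of $V$ at $0$, is exactly Whitney's Theorem~8.10, and your forward inclusion and the dimension count via $\operatorname{gr}_{\mathfrak m}(\cc\{x\}/\II)\cong \cc[x]/\II^{*}$ are the standard way to extract the remaining assertions from it. Thus you are on the same route as the cited source; the only genuine divergence is the method proposed for the hard inclusion $V(\II^{*})\subset C_0(V)$ (blow-up of the origin together with curve selection on the strict transform, rather than Whitney's direct argument), and that is a legitimate and well-known alternative, provided one notes that the exceptional fibre of the strict transform equals $\mathbb{P}(V(\II^{*}))$ by a direct coordinate computation, independently of the identity being proved, so there is no circularity.

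One small slip in the simple-point step: after a linear change placing $T_0(V)=\cc^q\times\{0\}$, the graphing map $\phi\colon(\cc^q,0)\to(\cc^{m-q},0)$ must vanish to order $\ge 2$ (equivalently $D\phi(0)=0$), not merely order $\ge 1$; if $\phi$ had a nonzero linear part the initial forms of the generators $x_{q+j}-\phi_j$ would not cut out $T_0(V)$. A cleaner way to close that step: the initial forms $x_{q+1},\ldots,x_m$ lie in $\II^{*}$, so $V(\II^{*})\subset T_0(V)$; since both are cones of dimension $q$ and $T_0(V)$ is irreducible, they coincide.
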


\subsection{Degree of algebraic sets}
 The \emph{degree} of a pure dimensional algebraic set $V\subset \cc^m$, denoted by $\deg V$, 
is defined as the  number of common points of $V$ and a generic affine subspace $X\subset \cc^m$ of dimension $m-\dim V$;
and the \emph{degree of  $V$ at a point} $a\in V$, denoted by $\deg_a V$,  is the multiplicity $i_a(V,X)$ of proper intersection of $V$ with  $X$ as above such that $a\in X$  (see \cite{Draper}).  Obviously, $\deg_a V\le \deg V$.

Let $V=V_1\cup\cdots\cup V_s$ be the decomposition of an algebraic set $V\subset \cc^m$   into irreducible components. The number
%By $\delta(V)$ we denote the \emph{total degree} of the algebraic set $V\subset \cc^m$, i.e., the number
$$
\delta (V):=\deg V_1+\cdots+\deg V_s
$$
is called the \emph{total degree} of  $V$ (see \cite{Lo3}).%, where $\deg V_j$ is the degree of $V_j$.

We have the following useful properties of the total degree (see \cite{Lo3}).

\begin{fact}\label{deltainters}\label{Fact1}\label{Fact2}
If $V,W\subset \cc^m$ are algebraic sets, then
\begin{align}
&\delta(V\cap W)\le \delta(V)\delta(W),\label{deltaintersection}\\
&\delta(V)\le \deg f_1\cdots \deg f_r,\quad\hbox{where }V=V(f_1,\ldots,f_r),\label{deltaproduct}\\
&\delta(\overline{L(V)})\le \delta(V)\quad\hbox{for any linear mapping }L:\cc^m\to\cc^k.\label{deltalinear}
%&\delta(C_0(V))\le \delta(V).\label{deltaoftangentcone}
\end{align}
\end{fact}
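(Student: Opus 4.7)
The plan is to establish the three inequalities \eqref{deltaintersection}, \eqref{deltaproduct}, \eqref{deltalinear} separately, in each case reducing to irreducible components and invoking a Bezout-style bound. Throughout I would pass to projective closures whenever it simplifies the intersection theory.

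For \eqref{deltaintersection}, I would decompose $V=V_1\cup\cdots\cup V_p$ and $W=W_1\cup\cdots\cup W_q$ into irreducible components, so that $V\cap W=\bigcup_{i,j}(V_i\cap W_j)$ and every irreducible component of $V\cap W$ lies in some $V_i\cap W_j$. It then suffices to check that $\delta(V_i\cap W_j)\le \deg V_i\cdot \deg W_j$ for irreducible $V_i,W_j$. In the case of proper intersection this is the classical Bezout theorem; the non-proper case follows from a refined Bezout bound on the total degree of the union of components of an improper intersection (the cleanest route is projective closure followed by a van der Waerden / Fulton--MacPherson type argument). Summing over pairs $(i,j)$ and noting that coincidences among the $V_i\cap W_j$ only decrease the total degree gives the claimed inequality.

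For \eqref{deltaproduct}, I would write $V=V(f_1)\cap\cdots\cap V(f_r)$ and iterate \eqref{deltaintersection}. Each hypersurface satisfies $\delta(V(f_j))\le \deg f_j$, since its irreducible components are exactly the zero sets of the distinct irreducible factors of $f_j$, whose degrees add up to at most $\deg f_j$. The bound then follows by induction on $r$. For \eqref{deltalinear}, I would again reduce to $V$ irreducible by writing $\overline{L(V)}=\bigcup_i \overline{L(V_i)}$. Setting $W_i=\overline{L(V_i)}$ and $r=\dim W_i$, a generic affine subspace $H\subset\cc^k$ of codimension $r$ meets $W_i$ in exactly $\deg W_i$ points. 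The preimage $L^{-1}(H)\cap V_i$ is cut out on $V_i$ by the pullback of generic linear forms defining $H$; a generic linear section of the irreducible variety $V_i$ by linear forms has at most $\deg V_i$ isolated points (Bezout in the projective closure), and every point of $H\cap W_i$ is the image of at least one such point. Hence $\deg W_i\le \deg V_i$, and summing over components yields the inequality.

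The main obstacle is making the non-proper case of \eqref{deltaintersection} rigorous, since excess-dimensional components of $V_i\cap W_j$ must still obey the product bound on their total degree. The honest route is either a homogenization argument on the projective closures combined with a classical form of Bezout, or a direct appeal to the refined Bezout theorem as presented in \cite{Lo3}. A related but milder issue arises in \eqref{deltalinear} when the generic fiber of $L|_{V_i}$ is positive-dimensional, where one must verify that the generic section really does surject onto $H\cap W_i$; this reduces to a standard generic-flatness / dimension-count argument on the irreducible variety $V_i$.
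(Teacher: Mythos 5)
The paper offers no proof of this Fact; it is stated with a bare citation to \cite{Lo3}, so there is no in-paper argument to compare against. On its own terms your outline is fine for \eqref{deltaintersection} and \eqref{deltaproduct}: reducing \eqref{deltaintersection} to pairs of irreducible components is correct, and the pairwise bound $\delta(V_i\cap W_j)\le \deg V_i\,\deg W_j$ is precisely the refined Bezout inequality, which one quotes from \cite{Lo3} exactly as the paper does; iterating it over the hypersurfaces $V(f_j)$ and using that $\delta(V(f_j))\le\deg f_j$ (via the factorization of $f_j$ into distinct irreducible factors) yields \eqref{deltaproduct}.

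For \eqref{deltalinear}, however, there is a genuine gap, which you flag but do not close. Reduce to $V$ irreducible of dimension $p$ with $W=\overline{L(V)}$ of dimension $r$. You cut $V$ only by $L^{-1}(H)$, $H\subset\cc^k$ a generic affine subspace of codimension $r$, and then count isolated points. But when $p>r$ the set $V\cap L^{-1}(H)$ has pure dimension $p-r>0$ (over each of the finitely many points of $W\cap H$ sits a fiber of $L|_V$ of dimension $p-r$), so it has no isolated points whatsoever, and the assertion that each point of $W\cap H$ lifts to an isolated point is simply false in this case. The repair is to slice further: choose in addition a generic affine $C\subset\cc^m$ of dimension $m-p+r$, so that $A=L^{-1}(H)\cap C$ has dimension $m-p$ and $V\cap A$ is finite with $\#(V\cap A)\le\deg V$ (proper intersection with an affine subspace of complementary dimension). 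A dimension count on each fiber $L^{-1}(y)\cap V$, $y\in W\cap H$, shows that a generic $C$ of this dimension meets every such fiber, whence $L(V\cap A)\supset W\cap H$ and $\deg W=\#(W\cap H)\le\#(V\cap A)\le\deg V$. This extra slicing is what your appeal to a ``generic-flatness / dimension-count argument'' must actually produce; as written, the step is not there.
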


\begin{fact}\label{factdegreeoftangcone}
If $V\subset \cc^m$ is an algebraic set, then $\delta (C_0(V))\le \delta (V)$. If $f:\cc^n\to\cc^m$ is a polynomial mapping of degree $d$,  finite at $0$, then $\delta\big(C_0\big(\overline{f(\cc^n)}\big)\big)\le d^{n}$.%, where $W=\overline{f(\cc^n)}$.
\end{fact}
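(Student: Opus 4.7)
The plan is to prove the two inequalities in order, with the second following from the first combined with a B\'ezout-type estimate on the degree of the image $\overline{f(\cc^n)}$.

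For the inequality $\delta(C_0(V))\le \delta(V)$, I would reduce to the case of irreducible $V$ with $0\in V$. Writing $V=V_1\cup\cdots\cup V_s$ for the decomposition into irreducible components, the limit-of-secants description of the Whitney tangent cone yields $C_0(V)=\bigcup_{0\in V_i}C_0(V_i)$, so the additivity of $\delta$ reduces the claim to showing $\delta(C_0(V_i))\le \deg V_i$ whenever $0\in V_i$. For an irreducible $W$ passing through $0$, the tangent cone $C_0(W)$ is pure dimensional of dimension $\dim W$, and its degree as an affine cone equals the multiplicity $\deg_0 W$ of $W$ at the origin; both numbers coincide with the leading coefficient (times $(\dim W)!$) of the Hilbert polynomial of the associated graded algebra $\operatorname{gr}_{\mathfrak m_0}\mathcal{O}_{W,0}$, whose homogeneous spectrum is exactly $C_0(W)$. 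Since the multiplicity at one prescribed point is at most the total degree, $\deg_0 W\le \deg W$, and summing over the components yields $\delta(C_0(V))\le \delta(V)$.

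For the second inequality I set $W=\overline{f(\cc^n)}$ and aim at $\delta(W)\le d^n$, after which the first part of the fact gives $\delta(C_0(W))\le d^n$. Finiteness of $f$ at $0$ forces $\dim_0 f^{-1}(0)=0$, so $\dim_0 W=n$; and $W$ is irreducible as the closure of the image of $\cc^n$, whence $\dim W=n$ and $\delta(W)=\deg W$. Pick a generic affine subspace $X=V(L_1,\ldots,L_n)\subset \cc^m$ of dimension $m-n$ with the $L_i$ affine-linear. For generic such $X$, by Chevalley's theorem the intersection $X\cap W$ consists of $\deg W$ distinct points lying in the dense constructible set $f(\cc^n)$, while $f^{-1}(X)=V(L_1\circ f,\ldots,L_n\circ f)$ is a zero-dimensional subvariety of $\cc^n$. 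Each $L_i\circ f$ has degree at most $d$, so the affine B\'ezout theorem gives $|f^{-1}(X)|\le d^n$. Since the natural map $f^{-1}(X)\to X\cap W$ is surjective for generic $X$, we conclude $\deg W\le |f^{-1}(X)|\le d^n$.

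I expect the main obstacle to lie in the first step: establishing rigorously the identification $\deg C_0(W)=\deg_0 W$ together with the pure dimensionality of the tangent cone for irreducible $W\ni 0$. Both facts ultimately rest on the flatness of the deformation to the tangent cone and, while classical, require careful bookkeeping with Hilbert polynomials and with the components of $V$ that happen to avoid the origin. The B\'ezout part, in contrast, reduces to verifying standard genericity conditions and applying affine intersection theory.
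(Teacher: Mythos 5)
Your argument is essentially correct and close in spirit to the paper's. For the first inequality the paper simply cites \cite[Theorem 6.4]{Draper}, whereas you unpack the proof: reduce to irreducible components through the origin, note the tangent cone of an irreducible germ is pure dimensional of the same dimension, and bound its degree by the Samuel multiplicity, which in turn is bounded by $\deg V_i$. This is a valid and more self-contained route. One small overclaim to fix: you assert that the degree of the reduced cone $C_0(W)$ \emph{equals} the multiplicity $\deg_0 W$; in general the projectivized tangent cone is a non-reduced scheme, so what you actually have is $\delta(C_0(W)) \le e(\mathcal{O}_{W,0}) = \deg_0 W$, with equality only when the tangent cone is reduced. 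The inequality is all you need, so the chain $\delta(C_0(W)) \le \deg_0 W \le \deg W$ stands, but the wording should be corrected.

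For the second inequality you argue just as the paper does, modulo packaging: intersect $\overline{f(\cc^n)}$ with a generic $(m-n)$-dimensional affine subspace $X = V(L_1,\ldots,L_n)$, pull back to $V(L_1\circ f,\ldots,L_n\circ f)\subset\cc^n$, and apply B\'ezout. The paper phrases the pullback estimate via the total-degree property $\delta(V(g_1,\ldots,g_r))\le \deg g_1\cdots\deg g_r$ (equation \eqref{deltaproduct}), which has the small advantage of not requiring you to verify that the fiber $f^{-1}(X)$ is zero-dimensional for generic $X$; your version needs, and asserts, that genericity of $X$ forces $X\cap W$ into the locus of finite fibers, which is true but worth stating explicitly. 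Also note the paper works with $k=\dim\overline{f(\cc^n)}\le n$ rather than fixing $k=n$ up front; since $f$ is finite at $0$ you correctly observe $k=n$, so the two are equivalent. Overall: same ideas, your version reproves the cited Draper step, and the one place to tighten is the equality-versus-inequality for the tangent cone degree.
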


\begin{proof} The first  assertion immediately follows from \cite[Theorem 6.4]{Draper}. We will prove the second assertion. Let $W=f(\cc^n)$,  $k=\dim \overline{W}$ and  $D=\deg \overline{W}$. Then $k\le n$ and by the definition of the degree of an algebraic set and the fact that  $\dim\overline{\overline{W}\setminus W}<k$, for a generic linear mapping $L:\cc^m\to\cc^k$ and a generic $z\in\cc^k$, $\#L^{-1}(z)\cap W=D$,
  %% ??
  where $\#$ stands for cardinality.
  Hence $D\le \delta((L\circ f)^{-1}(z))$. Since for $y\in\cc^m$ such that $L(y)=z$ we have $(L\circ f)^{-1}(z)=(L\circ (f-y))^{-1}(0)$, \eqref{deltaproduct} gives $D\le d^k\le d^n$. Consequently, the first assertion implies the second.
\end{proof}

%\subsection{Auxiliary lemma}
We denote by
$\mathbb{L}(m,r)$ the set of linear mappings $\mathbb{C}^m\rightarrow \mathbb{C}^r$, where $m,r \in \mathbb{N}$.  We set $\mathbb{L}(m,r)= \{0\}$ if  $r=0$.

We will need the following lemma (cf. \cite[Lemma 3.20]{KOSS} and  \cite[Lemma 1.1]{S0}).

\begin{lem}\label{Fact3} Let  $f=(f_1,\ldots,f_r):\cc^n\to\cc^r$ be a polynomial mapping such that $\dim V(f)=q$. Let $W$ be the union of the irreducible components  of dimension $q$ of the set $V(f)$. Then for a generic linear mapping $L=(L_1,\ldots,L_{n-q})\in\mathbb{L}(r,n-q)$ the set $V_1=V(L_1\circ f,\ldots,L_{n-q}\circ f)$ has dimension $q$, $W$ is the union of some irreducible components of $V_1$, and
$$
\deg W\le \delta(V_1)\le \deg (L_1\circ f)\cdots\deg (L_{n-q}\circ f).
$$
\end{lem}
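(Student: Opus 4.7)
The plan is to realize $V_1$ as the preimage $f^{-1}(H)$ of a linear subspace $H\subset \cc^r$ of codimension $n-q$, and then to verify, via Krull's principal ideal theorem plus an incidence-variety dimension count, that $V_1$ has pure dimension $q$ for generic $L$.

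Concretely, for generic $L=(L_1,\ldots,L_{n-q})\in\LL(r,n-q)$ the map $L\colon\cc^r\to\cc^{n-q}$ is surjective, so $H:=L^{-1}(0)$ has codimension $n-q$ in $\cc^r$ and $V_1=(L\circ f)^{-1}(0)=f^{-1}(H)$. Since $V_1$ is cut out in $\cc^n$ by $n-q$ equations, the principal ideal theorem forces every irreducible component of $V_1$ to have dimension $\ge q$, so $\dim V_1\ge q$. For the matching upper bound I would introduce the incidence set
$$\Sigma=\{(x,L)\in \cc^n\times\LL(r,n-q):L(f(x))=0\}.$$
Projecting to $\cc^n$: the fiber over $x\in V(f)$ is all of $\LL(r,n-q)$, contributing $q+\dim\LL(r,n-q)$ to $\dim\Sigma$; the fiber over $x\notin V(f)$ is cut out by $n-q$ linearly independent conditions on $L$ (since $f(x)\ne 0$), contributing $n+\dim\LL(r,n-q)-(n-q)=q+\dim\LL(r,n-q)$ as well. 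Hence $\dim\Sigma\le q+\dim\LL(r,n-q)$. The second projection $\Sigma\to\LL(r,n-q)$ is surjective because $V(f)\ne\emptyset$, so generic fiber dimension gives $\dim f^{-1}(H)\le q$ for $L$ in a dense open subset of $\LL(r,n-q)$.

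Combining both bounds, $V_1$ is of pure dimension $q$ for generic $L$. Since $W\subset V(f)\subset V_1$ is itself of pure dimension $q$, every irreducible component of $W$ must coincide with some irreducible component of $V_1$; that is, $W$ is a union of some of the components of $V_1$. Summing degrees over those components yields $\deg W\le \delta(V_1)$, while the remaining inequality $\delta(V_1)\le \deg(L_1\circ f)\cdots\deg(L_{n-q}\circ f)$ is immediate from \eqref{deltaproduct} in Fact~\ref{Fact2}.

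The main subtlety lies in the dimension upper bound $\dim V_1\le q$: a direct ``generic hyperplane section'' argument applied to $X=\overline{f(\cc^n)}$ is slightly awkward because $H$ is constrained to contain $0\in X$, so the naive formula $\dim H\cap X=\dim X-(n-q)$ may fail at the origin. The incidence construction sidesteps this by bounding the fibers of $f$ over generic $H$ directly, rather than first bounding $\dim H\cap X$.
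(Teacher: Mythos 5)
Your proof is correct, and it takes a genuinely cleaner route than the paper's. The paper first passes to $Y=\overline{f(\cc^n)}$ via Chevalley's theorem, sets $k=\dim C_0(Y)=\dim Y$, and then \emph{splits into two cases}: when $k<r$ it argues through the finiteness of $Y\cap L^{-1}(0)$ and the structure of generic fibers of $f$ over $Y$, and only when $k=r$ (i.e.\ $f$ dominant onto $\cc^r$) does it introduce the incidence variety $Z=\{(x,L):L\circ f(x)=0\}$ and compute $\dim Z=r(n-q)+q$ -- a computation that implicitly relies on $Y=\cc^r$. You instead run the incidence-variety count over \emph{all} of $\cc^n$ without assuming dominance, which makes the case distinction unnecessary: the fiber over $x\in V(f)$ is all of $\LL(r,n-q)$, the fiber over $x\notin V(f)$ is a codimension-$(n-q)$ linear slice of $\LL(r,n-q)$, and both strata contribute at most $q+\dim\LL(r,n-q)$ to $\dim\Sigma$. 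Pairing this upper bound with Krull's principal ideal theorem is a nice touch: it gives you \emph{pure} dimension $q$ for $V_1$, which the lemma does not assert but which makes the ``$W$ is a union of components of $V_1$'' step immediate. (Strictly speaking Krull is dispensable for the stated conclusion -- the components of $W$ are $q$-dimensional and those of $V_1$ are $\le q$-dimensional, which already forces coincidence -- but it streamlines the presentation.) Your closing remark on why a naive generic-hyperplane-section argument on $\overline{f(\cc^n)}$ is awkward near $0$ is exactly the issue the paper's $k<r$ branch has to navigate, and your construction sidesteps it.
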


\begin{proof} Without loss of generality we may assume that $q\ge 0$.
Let $Y=\overline{f(\cc^n)}\subset \cc^r$. By Chevalley's theorem (see \cite[p. 395]{Lo3}) the set $Y$ is algebraic and obviously irreducible. Moreover,  \cite[Theorem  3.13]{M}  easily implies that $r\ge \dim_0 Y=\dim Y\ge n-q$, and there exists a Zariski open and dense subset $U$ of $Y$ such that $\dim f^{-1}(y)\le q$ for $y\in U$ and obviously $\dim (Y\setminus U)\le \dim Y-1$.

Let $k=\dim C_0(Y)$. By Fact \ref{factpropertiestangcone}, $k=\dim Y$ and so $r\ge k\ge n-q$.

If $k<r$, by definition of the degree of an algebraic set, for a generic linear mapping $L:\cc^r\to\cc^{k}$ the set $Y\cap L^{-1}(0)$ is finite,  $(Y\setminus U)\cap L^{-1}(0)\subset \{0\}$ and obviously $0\in Y\cap L^{-1}(0)$. So,  $V(L\circ f)$ has dimension $q$ and contains $W$. Consequently, $W$ is the union of some irreducible components of $V(L\circ f)$. Hence, it suffices to consider the case $k=r$, and then obviously $Y=\cc^r$ and $r\le n$.

Let
$$
Z=\{(x,L)\in \cc^n\times \mathbb{L}(r,n-q):L\circ f(x)=0\},
$$
and let
$$
\pi:Z\ni (x,L)\mapsto L\in \mathbb{L}(r,n-q).
$$
Obviously $\dim \mathbb{L}(r,n-q)=r(n-q)$. Since $Y=\cc^r$ and $r\le n$, we have $\overline{\pi(Z)}=\mathbb{L}(r,n-q)$, $\dim Z=r(n-q)+q$ and for a generic $L\in \mathbb{L}(r,n-q)$ the set $\pi^{-1}(L)$ has dimension $q$. This gives the first  assertion. The second follows immediately from the first  and Fact \ref{deltainters}.
\end{proof}

From Lemma \ref{Fact3} we immediately obtain

\begin{wn}\label{factdeltaofacone}
If $V=V(f_1,\ldots,f_r)\subset \cc^n$ is an algebraic cone of pure dimension $q$, where $f_1,\ldots,f_r:\cc^n\to\cc$ are nonzero homogeneous polynomials with $\deg f_j\le d$ for $1\le j\le r$, then
$$
\delta(V)\le d^{n-q}.
$$
\end{wn}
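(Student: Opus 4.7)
My plan is to apply Lemma \ref{Fact3} directly to the polynomial mapping $f=(f_1,\ldots,f_r)\colon\cc^n\to\cc^r$. Since $V=V(f)$ has pure dimension $q$, the set $W$ appearing in the lemma---the union of the $q$-dimensional irreducible components of $V(f)$---coincides with $V$ itself. Hence for a generic linear mapping $L=(L_1,\ldots,L_{n-q})\in\LL(r,n-q)$, the lemma yields
$$
\deg V=\deg W\le \deg(L_1\circ f)\cdots\deg(L_{n-q}\circ f).
$$
Each coordinate $L_i\circ f=\sum_j c_{ij}f_j$ is a $\cc$-linear combination of the $f_j$, so $\deg(L_i\circ f)\le \max_j\deg f_j\le d$, and the product is bounded by $d^{n-q}$.

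To finish, I would upgrade $\deg V$ to $\delta(V)$. Writing $V=V_1\cup\cdots\cup V_s$ for the irreducible decomposition, purity forces $\dim V_i=q$ for every $i$, so a generic affine subspace of dimension $n-q$ meets the distinct components $V_i$ in disjoint sets of $\deg V_i$ points; summing yields $\deg V=\sum_i\deg V_i=\delta(V)$. Combined with the previous bound, this gives $\delta(V)\le d^{n-q}$.

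I do not anticipate a genuine obstacle: the real content is carried by Lemma \ref{Fact3}, and what remains is essentially bookkeeping. The one point worth spelling out is the identity $\deg V=\delta(V)$ for pure dimensional $V$, which is immediate from the definition of degree via a generic transverse slice. It is also worth noting that the homogeneity of the $f_j$ (i.e.\ the fact that $V$ is a cone) is not actually used in this deduction---only the pure dimensionality of $V$ enters the argument.
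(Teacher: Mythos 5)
Your proposal is correct and matches the paper's (implicit) argument, which simply cites Lemma~\ref{Fact3}: with $V$ pure dimensional, $W=V$ there, and the degree bound $\deg(L_i\circ f)\le d$ gives the estimate. The only cosmetic difference is that you pass through $\deg V$ and then prove $\deg V=\delta(V)$ by a generic transverse slice, whereas one can reach $\delta(V)\le\delta(V_1)$ even more directly from the lemma's assertion that $W$ is a union of components of $V_1$; both routes are fine.
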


\subsection{Projective algebraic sets}
Let $\PP^m$ be the $m$-dimensional complex projective space.
Let $V\subset \PP^m$ be an algebraic set, i.e., the set of common zeros of some system $S\subset \cc[x_0,\ldots,x_m]$ of homogeneous polynomials. Let  $\II\subset \cc[x_0,\ldots,x_m]$ be the radical of the ideal generated by $S$. Then $\II$ is the ideal of all polynomials vanishing on $V$ and
$$
V^\cc:=\{(x_0,\ldots,x_m)\in \cc^{m+1}:[x_0:\ldots:x_m]\in V\}\cup\{0\}=V(\II),
$$
where $V(\II)=\{x\in \cc^{m+1}:f(x)=0\hbox{ for any }f\in \II\}$, is an algebraic cone. We have $\dim V=\dim V^\cc-1$. The set $V$ is called \emph{pure dimensional} if all irreducible components of $V$ have the same dimension.

Since any hyperplane $H\subset \cc^{m+1}$ of dimension $k$ can be written in the form $V(L_1,\ldots,L_{m-k},L_{m-k+1}-a)$ for some $L_1,\ldots,L_{m-k+1}\in \mathbb{L}(m,1)$ and $a\in\cc$, we can define the \emph{degree} of  $V$ by $\deg V=\deg V^\cc$ (if $V$ is of pure dimension), and its \emph{total degree} by $\delta(V)=\delta(V^\cc)$.

\section{Auxiliary results}

\subsection{Main lemma}

A system of functions $N_1,\ldots,N_s\in\mathbb{L}(m,1)$, $s\ge m$, will be called \emph{independent} if for any sequence $1\le i_1<\cdots<i_m\le s$ the system  $N_{i_1},\ldots,N_{i_m}$ is linearly independent over $\mathbb{C}$.

\begin{rem}\label{exa1}
Take a system $N_1,\ldots,N_s\in \mathbb{L}(m,1)$ with $s\ge n$, and let $N_i(x)=a_{i,1}x_1+\cdots+a_{i,m}x_m$. Then the system $N_1,\ldots,N_s$ is independent if and only if $\prod_{1\le i_1<\cdots<i_m\le s}\det[a_{i_j,l}]_{1\le j,l\le m}\ne 0$. So, the set of independent systems is a dense subset of $\mathbb{L}(m,s)$ with algebraic complement.
\end{rem}

\begin{exa}\label{exaexample}
For any injective sequence $a_i\in\cc$, $1\le i\le s$, $s\ge m$, the system
$$
N_i(x_1,\ldots,x_m)=x_1+a_ix_2+\cdots+a_i^{m-1}x_m,\quad 1\le i\le s,
$$
is independent. Indeed, this follows from the properties of the Vandermonde determinant.
 \end{exa}

A crucial role in the discussions below will be played by the following lemma.

\begin{lem}\label{arange0} Let $m,q,d$ be positive integers with $m\ge 2$, $m\ge q$, let $\ell=d(m-q)+q$, and let $N_j\in\mathbb{L}(m,1)$, $1\le j\le \ell$, be a system of independent linear functions. Then for any algebraic cone $C_0\subset \cc^m$ with $\dim C_0\le q$ and $\delta(C_0)\le d$, there exist $1\le i_1<\cdots<i_q\le \ell$ such that
\begin{equation*}%\label{arrangeN}
C_0\cap V(N_{i_1},\ldots,N_{i_q})=\{0\}.% \quad\hbox{for some }i\in\{1,\ldots,s\}.
\end{equation*}
\end{lem}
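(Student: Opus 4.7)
The plan is to build the indices inductively so that for each $k = 0, 1, \ldots, q$ the cone $C_k := C_0 \cap V(N_{i_1}, \ldots, N_{i_k})$ satisfies $\dim C_k \le q - k$ and $\delta(C_k) \le d$. The base case $k = 0$ is the hypothesis on $C_0$. For the inductive step, it is enough to choose an unused index $i_k$ so that the hyperplane $V(N_{i_k})$ does not contain any irreducible component of $C_{k-1}$ of the top dimension $q - k + 1$; then $\dim C_k \le q - k$, while $\delta(C_k) \le \delta(C_{k-1})\cdot \delta(V(N_{i_k})) \le d$ by \eqref{deltaintersection}. After $q$ steps, $C_q$ is a cone of dimension $\le 0$ containing the origin, hence $C_q = \{0\}$, and sorting the chosen indices yields the required sequence $1 \le i_1 < \cdots < i_q \le \ell$.

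The heart of the proof is a count of admissible indices at step $k$. Let $W_1, \ldots, W_r$ be the irreducible components of $C_{k-1}$ of dimension $q - k + 1$; since each contributes at least $1$ to the total degree, $r \le \delta(C_{k-1}) \le d$. For every $W_j$, the linear span of $W_j$ in $\cc^m$ has dimension at least $q - k + 1$, so the annihilator $U_j := \{L \in \mathbb{L}(m,1) : L|_{W_j} \equiv 0\}$ has dimension at most $m - q + k - 1$. The independence hypothesis gives the combinatorial lemma that any linear subspace of $\mathbb{L}(m,1)$ of dimension $t < m$ contains at most $t$ of the $N_j$: otherwise $t + 1 \le m$ of the $N_j$ would be linearly dependent, contradicting that any subset of size $\le m$ of an independent system is linearly independent (extend it to an $m$-subset, which would then also be dependent). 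Thus $U_j$ contains at most $m - q + k - 1$ of the $N_j$; since $N_{i_1}, \ldots, N_{i_{k-1}}$ all vanish on $C_{k-1}\supset W_j$, they account for $k - 1$ of these, so at most $m - q$ of the remaining $\ell - (k - 1)$ forms lie in $U_j$.

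Summing over the $r \le d$ top-dimensional components, at most $d(m - q)$ of the $\ell - (k-1)$ remaining indices are bad, leaving at least $\ell - (k-1) - d(m-q) = q - k + 1 \ge 1$ admissible candidates for $i_k$ (and if $\dim C_{k-1} < q - k + 1$ to begin with, any unused index works automatically). This completes the induction. The main obstacle is calibrating the counting: the combinatorial bound on how many independent $N_j$ can lie in a common annihilator must be combined with the algebraic bound on how many top-dimensional components a cone of total degree $\le d$ can have, and it is precisely this matching that forces the threshold $\ell = d(m-q) + q$.
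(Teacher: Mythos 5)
Your proof is correct. The key counting step—each top-dimensional irreducible component of $C_{k-1}$ contributes an annihilator of dimension at most $m-q+k-1$, which by independence contains at most that many of the $N_j$, of which $k-1$ are already spent—matches the essential combinatorial content of the paper's argument, and the arithmetic $\ell-(k-1)-d(m-q)=q-k+1\ge 1$ closes the greedy step. The difference is in the inductive scaffolding. The paper inducts on $q$: it produces a single $N_i$ dropping the dimension (by the same annihilator count), then changes coordinates so that $N_i=x_m$ and passes to the hyperplane $\cc^{m-1}\times\{0\}$, invoking the lemma for $(m-1,q-1)$ with the $\ell-1$ restricted forms; this requires the auxiliary observation that the restrictions of an independent system to that hyperplane remain independent. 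You instead stay in $\cc^m$ throughout and run a greedy induction on the number of chosen indices, explicitly tracking which of the $\ell$ forms have been used and showing that the already-used ones are automatically "bad" for every top-dimensional component of $C_{k-1}$, so they do not waste budget. This buys a more elementary argument—no coordinate change, no claim about restricted systems, a single self-contained count—at the cost of slightly heavier bookkeeping; the paper's version isolates the one-hyperplane step cleanly and then recurses. Both routes expose exactly why $\ell=d(m-q)+q$ is the right threshold.
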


\begin{proof} The proof is by induction on $q\in\{1,\ldots,m\}$. Take any algebraic cone $C_0\subset \cc^m$ with $\dim C_0=1$ and $\delta(C_0)\le d$. Then there are $w_1,\ldots, w_d\in\cc^m\setminus\{0\}$ such that  $C_0=w_1\cc\cup\cdots\cup w_d\cc$. Suppose that
for any $i\in\{1,\ldots,\ell\}$ %, where $\ell=d(m-1)+1$,
there exists $\varphi(i)\in\{1,\ldots,d\}$ such that $N_i(w_{\varphi(i)})=0$. Since $w_{\varphi(i)}\ne 0$,  the choice of $N_1,\ldots,N_\ell$ yields $\#\varphi^{-1}(j)\le m-1$, so% for any $j\in \{1,\ldots,d\}$. So,
$$
d(m-1)+1=\ell= \#\varphi^{-1}(\{1,\ldots,d\}) = \#\varphi^{-1}(1)+\cdots+\#\varphi^{-1}(d)\le d(m-1),
$$
which is impossible.
% (by $\# A$ we denote the cardinality of a set $A$).
This gives the assertion for $q=1$.

Assume that the assertion holds for $1\le q-1<m$. Take any algebraic cone $C_0$ with $\dim C_0\le q$ and $\delta(C_0)\le d$. If $q=m$ then the assertion is obvious. Assume that $q<m$. Let $C_0=V_1\cup\cdots\cup V_s$ be the decomposition into irreducible components. Since $\delta(C_0)\le d$, we have  $s\le d$. Observe that there exists $i\in\{1,\ldots,\ell\}$, %where $\ell=d(m-q)+q$,
 such that $\dim C_0\cap V(N_i)<q$. Indeed, suppose that for any $i\in\{1,\ldots,\ell\}$ there exists $\psi(i)\in \{1,\ldots,s\}$ such that $V_{\psi(i)}\subset V(N_i)$ and $\dim V_{\psi(i)}=q$. Analogously to the above, the choice of $N_1,\ldots,N_\ell$ yields $\#\psi^{-1}(j)\le m-q$, so
$$
d(m-q)+q=\ell=\psi^{-1}(\{1,\ldots,s\})%=\#\psi^{-1}(1)+\cdots+\#\psi^{-1}(s)
\le s(m-q)\le d(m-q),
$$
which is impossible and gives the announced observation.

By the above observation we can take a function $N_i$ with $\dim C_1\le q-1$ and $\delta(C_1)\le d$, where $C_1=C_0\cap V(N_i)$. After a linear change of coordinates, we can assume that $N_i(x_1,\ldots,x_m)=x_m$. % and then $M_j=N_j|_{\cc^{m-1}\times\{0\}}$ for $j\ne i$ is an independent function system. Moreover
Then the family $N_j|_{\cc^{m-1}\times\{0\}}$, $j\ne i$, contains $d(m-q)+q-1=d((m-1)-(q-1))+q-1$ independent linear functions in $x_1,\ldots,x_{m-1}$. So, the induction hypothesis gives the assertion for $q$ and the proof  is complete. 
\end{proof}

\begin{rem}\label{lemthebest}
The number $\ell=d(m-q)+q$ in Lemma \ref{arange0} is optimal. Indeed, take an independent system of functions $N_j\in\mathbb{L}(m,1)$, $1\le j\le \ell$. Let $$
V_i=V(N_{(i-1)(m-q)+1},\ldots,N_{(i-1)(m-q)+m-q}),\quad i=1,\ldots,d.
$$
Then $C_0=V_1\cup\cdots\cup V_d\subset \cc^m$  is an algebraic cone, $\dim(C_0)=q$, $\delta(C_0)=d$, and
\begin{align}
&C_0\cap V(N_{i_1},\ldots,N_{i_q})\ne \{0\}\quad \hbox{for }1\le i_1<\cdots<i_q\le \ell\hbox{ with }i_1\le d(m-q),\nonumber\\
&C_0\cap V(N_{d(m-q)+1},\ldots,N_{d(m-q)+q})= \{0\}.\nonumber
\end{align}
\end{rem}

\subsection{Corollaries from Lemma \ref{arange0}}

From Lemma \ref{arange0} we immediately obtain (see \cite[Theorem 6.3]{Draper})

\begin{wn}%\label{Bertinimultiple}
Let $d,m,q$ be positive integers such that $m\ge q$, let
$
\ell=  d(m-q)+q%d^{m-q}((m-q)(d-1)+q-1)+m(q-1)-1,
$,
 and let $N_j\in\mathbb{L}(m(q-1),1)$, $1\le j\le \ell$, be a system of independent linear functions.
Then  for any  algebraic set $V=V(f_1,\ldots,f_r)\subset \cc^m$ of pure dimension $q$, where $f_j\in\cc[x]$, $\deg f_j\le d$ for $1\le j\le r$, we have
$$
\deg _0V=\min_{1\le i_1<\cdots<i_{q}\le \ell}i_0(V,V(N_{i_1},\ldots,N_{i_q})).%=\min_{1\le i_1<\cdots<i_{q}\le \ell}i(C_0(V),V(N_{i_1},\ldots,N_{i_q})).
$$
\end{wn}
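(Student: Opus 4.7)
The plan is to apply Lemma \ref{arange0} to the tangent cone $C_0(V)$. By Fact \ref{factpropertiestangcone}, $C_0(V)$ is an algebraic cone with $\dim C_0(V)=q$; Lemma \ref{Fact3} applied to $f=(f_1,\ldots,f_r)$ together with Fact \ref{factdegreeoftangcone} gives the degree estimate $\delta(C_0(V))\le \delta(V)\le d^{m-q}$, obtained by taking $m-q$ generic linear combinations of the $f_j$ and observing that $V$ is pure of dimension $q$. With $\dim C_0(V)\le q$ and this bound on the total degree, Lemma \ref{arange0} produces indices $1\le i_1<\cdots<i_q\le \ell$ with
\[
C_0(V)\cap V(N_{i_1},\ldots,N_{i_q})=\{0\}.
\]

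The crucial ingredient at this point is the tangent-cone description of multiplicity: for any linear subspace $X\subset\mathbb{C}^m$ of codimension $q$ through $0$ satisfying $X\cap C_0(V)=\{0\}$, the intersection $V\cap X$ is proper at $0$ and $i_0(V,X)=\deg_0 V$. This is the classical fact underlying Draper's intersection theory that the multiplicity of $V$ at $0$ can be read off from any linear slice transverse to the tangent cone. Applied to $X=V(N_{i_1},\ldots,N_{i_q})$ obtained above, it exhibits one choice of indices for which $i_0(V,X)=\deg_0V$, so the minimum on the right-hand side is at most $\deg_0V$.

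For the opposite inequality, $\deg_0 V$ is by definition the value of $i_0(V,X)$ for generic $X$ of complementary dimension through $0$, which by the semicontinuity of intersection multiplicity is the minimum over all $X$ yielding a proper intersection. In particular, $i_0(V,V(N_{i_1},\ldots,N_{i_q}))\ge \deg_0V$ for every choice of indices producing a proper intersection, while improper intersections contribute $+\infty$ to the minimum. Combining the two inequalities yields the stated formula.

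The main obstacle, and the only point where something nontrivial must be checked, is the numerical bound on $\ell$: Lemma \ref{arange0} requires $\ell\ge \delta(C_0(V))(m-q)+q$, so the key input is the estimate $\delta(C_0(V))\le d^{m-q}$ just derived (since defining polynomials of degree $\le d$ in an ambient space of dimension $m$ control the total degree of a pure-$q$-dimensional variety by $d^{m-q}$ via Lemma \ref{Fact3}, not by $d$). Once the numerical condition on $\ell$ is in place, the rest is a direct reading of Lemma \ref{arange0} and of the tangent-cone multiplicity formula.
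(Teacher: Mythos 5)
Your approach coincides with the paper's: Lemma \ref{arange0} applied to the tangent cone $C_0(V)$, plus Draper's tangent-cone characterisation of $\deg_0 V$ (the paper simply cites \cite[Theorem 6.3]{Draper} at this point). But your own bookkeeping reveals a numerical mismatch that you notice and then pass over without resolving. You correctly derive $\delta(C_0(V))\le\delta(V)\le d^{m-q}$ via Fact~\ref{factdegreeoftangcone} and Lemma~\ref{Fact3}, and you even emphasise that the controlling quantity is $d^{m-q}$, \emph{not} $d$. Lemma~\ref{arange0} with this bound requires $\ell\ge d^{m-q}(m-q)+q$. The corollary instead stipulates $\ell=d(m-q)+q$, which would only suffice if one knew $\delta(C_0(V))\le d$ — a bound you neither prove nor could prove, because it is false in general. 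For instance take $m=3$, $q=1$, $d=2$, $V=V(xy,xz,yz)\subset\cc^3$; then $C_0(V)=V$ is the union of the three coordinate axes, $\delta(C_0(V))=3>d$, $\deg_0V=3$, and the stated $\ell$ is $5$. Taking the five linear forms with coefficient vectors $(0,1,1),(0,1,2),(1,0,1),(1,0,2),(1,1,0)$ (any three of these are linearly independent, so this is an independent system), each $N_i$ annihilates one coordinate axis, so $V\cap V(N_i)$ is improper at $0$ for every $i$ and the right-hand side minimum cannot equal $\deg_0V$.

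So your argument is sound but proves the formula for $\ell\ge d^{m-q}(m-q)+q$, not for the $\ell$ appearing in the statement; you should have flagged the discrepancy explicitly instead of leaving it implicit in the parenthetical ``not by $d$.'' Judging from the exponents $d^n$ and $d^{m-q}$ that appear in the analogous Theorems \ref{twmultiplicityformula}, \ref{Bertini} and \ref{Bertinimultiple}, the stated $\ell=d(m-q)+q$ is almost certainly a misprint for $d^{m-q}(m-q)+q$, and your proof establishes the corrected statement. (Also note that $\mathbb{L}(m(q-1),1)$ in the hypothesis must be $\mathbb{L}(m,1)$ for $V(N_{i_1},\ldots,N_{i_q})$ to live in $\cc^m$, which you tacitly assumed.)
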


\begin{wn}\label{remproperintersection}
Let $C_0\subset \cc^m$ be an algebraic cone of pure dimension $q$ and degree $d$.

{\rm(a)} If  $N_1,\ldots,N_q\in\mathbb{L}(m,1)$ is a system of linear functions such that $C_0\cap V(N_1,\ldots,N_q)=\{0\}$, then for any
%%??
$1\le i_1<\cdots <i_s\le q$ the intersection $C_0\cap V(N_{i_1},\ldots,N_{i_s})$ is proper.

{\rm(b)} For any system of independent linear functions $N_j\in\mathbb{L}(m,1)$, $1\le j\le \ell$, $\ell=d(m-q)+q$, where $d=\deg C_0$, there exist $1\le j_1<\cdots<j_q\le \ell$ such that for any $1\le i_1<\cdots,i_s\le q$ the intersection $C_0\cap V(N_{j_{i_1}},\ldots,N_{j_{i_s}})$ is proper.
\end{wn}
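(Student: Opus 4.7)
The plan is to prove the two parts by a short dimension‐count argument, invoking Lemma \ref{arange0} for part (b).

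For part (a), I would fix any $1\le i_1<\cdots<i_s\le q$ and let $\{k_1,\ldots,k_{q-s}\}=\{1,\ldots,q\}\setminus\{i_1,\ldots,i_s\}$. The goal is to show that every irreducible component of $Y:=C_0\cap V(N_{i_1},\ldots,N_{i_s})$ has dimension exactly $q-s$. The lower bound $\dim Y\ge q-s$ (componentwise) follows from the standard intersection–dimension theorem applied to the pure $q$-dimensional cone $C_0$ and the linear subspace $V(N_{i_1},\ldots,N_{i_s})$, whose codimension is at most $s$. For the upper bound, observe that $Y\cap V(N_{k_1},\ldots,N_{k_{q-s}})\subset C_0\cap V(N_1,\ldots,N_q)=\{0\}$, so after intersecting $Y$ with $q-s$ hyperplanes we obtain a set of dimension $0$. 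Since each hyperplane drops the dimension of an irreducible component by at most $1$ (Krull's Hauptidealsatz), every component of $Y$ has dimension at most $q-s$. Together with the lower bound this yields $\dim Y=q-s$ componentwise, i.e.\ the intersection is proper.

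For part (b), apply Lemma \ref{arange0} with $\dim C_0\le q$ and $\delta(C_0)\le d$ (which holds because $C_0$ is of pure dimension $q$ and degree $d$, hence $\delta(C_0)=d$). The lemma supplies indices $1\le j_1<\cdots<j_q\le\ell$ such that
\[
C_0\cap V(N_{j_1},\ldots,N_{j_q})=\{0\}.
\]
Now the system $N_{j_1},\ldots,N_{j_q}$ satisfies the hypothesis of part (a) (applied to $C_0$), so for every $1\le i_1<\cdots<i_s\le q$ the intersection $C_0\cap V(N_{j_{i_1}},\ldots,N_{j_{i_s}})$ is proper. This is exactly the claim of (b).

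The only potentially delicate point is the lower bound on $\dim Y$ in part (a): one must ensure that this bound holds for every irreducible component, not merely for the total dimension. This is a standard consequence of the affine intersection–dimension theorem for pure-dimensional varieties intersected with linear subspaces, but it is worth quoting it cleanly rather than deriving it inductively. Everything else is a routine application of the dimension drop under hyperplane sections, so I expect the argument to fit in a few lines.
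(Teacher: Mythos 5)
Your proposal is correct and follows essentially the same line as the paper's proof: both rest on the affine dimension theorem (Mumford's Theorem 3.13, i.e.\ Krull's Hauptidealsatz) applied to hyperplane sections of an irreducible component, with the key observation that $C_0$ and the $V(N_j)$ are cones, so all intersections contain the origin and are therefore nonempty, making the lower bound from Krull applicable. The paper phrases part (a) slightly differently --- it orders the forms via a permutation $\sigma$ and forces every inequality in the $q$-step chain from $\dim C_1=q$ down to $0$ to be an equality --- whereas you cut the remaining $q-s$ hyperplanes directly from $Y$; these are the same computation, and part (b) is obtained in both cases by feeding the output of Lemma~\ref{arange0} into (a).
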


\begin{proof}
Let $C_1$ be an irreducible component of $C_0$. Take any $1\le i_1<\cdots<i_s\le q$ and let $\sigma:\{1,\ldots,q\}\to\{1,\ldots,q\}$ be a bijection such that $\sigma(j)=i_j$ for $1\le j\le s$. By \cite[Theorem 3.13]{M} for any $1\le i \le q$ we have
\begin{equation*}
\dim (C_1\cap V(N_{\sigma(1)},\ldots,N_{\sigma(i-1)}))-1\le\dim (C_1\cap V(N_{\sigma(1)},\ldots,N_{\sigma(i)})).
%\\  \le \dim (C_1\cap V(N_{\sigma(1)},\ldots,N_{\sigma(i-1)})).
\end{equation*}
Since $\dim C_1=q$ and $\dim (C_1\cap V(N_{\sigma(1)},\ldots,N_{\sigma(q)}))=0$, for $1\le i\le q$ we have
$$
\dim (C_1\cap V(N_{\sigma(1)},\ldots,N_{\sigma(i-1)}))-1 =\dim (C_1\cap V(N_{\sigma(1)},\ldots,N_{\sigma(i)})),
$$
and consequently $\dim (C_1\cap V(N_{\sigma(1)},\ldots,N_{\sigma(s)}))=q-s$, which gives (a). From (a) and Lemma \ref{arange0} we immediately obtain (b). 
\end{proof}

From Lemma \ref{arange0} for $q=m-1$ we immediately obtain

\begin{wn}\label{cornotin}
Let $m,d$ be positive integers with $m\ge 2$, let $\ell=d+m-1$, and let $N_j\in\mathbb{L}(m,1)$, $1\le j\le \ell$, be a system of independent linear functions. Then  for any algebraic cone $C_0\subset \cc^m$ with $\dim C_0<m$ and $\delta(C_0)\le d$, there exist $1\le i_1<\cdots<i_{m-1}\le \ell$  such  that  $V(N_{i_1},\ldots,N_{i_{m-1}})\cap C_0\subset\{0\}$. Moreover, the sets $V(N_{i_1},\ldots,N_{i_{m-1}})$, $1\le i_1<\cdots<i_{m-1}\le \ell$, are one-dimensional linear subspaces of $\cc^m$.
\end{wn}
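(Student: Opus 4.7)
The plan is to simply invoke Lemma \ref{arange0} with $q=m-1$ and then verify the moreover clause from the definition of an independent system. First I would observe that taking $q=m-1$ in the lemma makes the numerical parameter match: $d(m-q)+q=d\cdot 1+(m-1)=d+m-1=\ell$. The hypotheses of the lemma are satisfied, since $m\ge 2$ yields $q=m-1\ge 1$, and the assumption $\dim C_0<m$ gives $\dim C_0\le m-1=q$, while $\delta(C_0)\le d$ is granted. Applying the lemma produces indices $1\le i_1<\cdots<i_{m-1}\le\ell$ for which
\[
C_0\cap V(N_{i_1},\ldots,N_{i_{m-1}})=\{0\}\subset\{0\},
\]
which is the required inclusion.

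For the moreover statement, I would use the definition of an independent system: any $m$ of the $N_j$ are linearly independent over $\cc$, hence \emph{a fortiori} any $m-1$ of them are linearly independent. Therefore, for every choice of indices $1\le i_1<\cdots<i_{m-1}\le\ell$, the mapping $(N_{i_1},\ldots,N_{i_{m-1}}):\cc^m\to\cc^{m-1}$ has rank $m-1$, so its kernel $V(N_{i_1},\ldots,N_{i_{m-1}})$ is a linear subspace of $\cc^m$ of dimension exactly $m-(m-1)=1$.

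There is essentially no obstacle here; the statement is a direct specialization of Lemma \ref{arange0}. The only minor point to address carefully is the translation of the hypothesis $\dim C_0<m$ into $\dim C_0\le q$ with $q=m-1$, and the verification that the intersection of $m-1$ among the independent linear forms indeed cuts out a one-dimensional subspace, which is immediate from the definition of independence.
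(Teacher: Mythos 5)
Your proof is correct and follows exactly the paper's intended route: the corollary is stated in the paper as an immediate consequence of Lemma \ref{arange0} with $q=m-1$, and you verify the parameter match $d(m-q)+q=d+m-1$ and the hypotheses precisely as needed, with the ``moreover'' clause handled directly from the definition of an independent system.
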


From Corollary \ref{cornotin} and Example \ref{exaexample} we immediately obtain

\begin{wn}\label{cornodinacone}
Let $N_j(x_1,\ldots,x_m)=x_1+a_jx_2+\cdots+a_j^{m-1}x_m$, where $a_j\in \cc\setminus\{0\}$ are pairwise different numbers, $1\le j\le \ell$,
$\ell=d+m-1$, $d>0$.
Then for any $1\le j_1<\cdots<j_{m-1}\le \ell$ there exists a unique point
\begin{equation*}\label{eqsystemequations}
F(a_{j_1},\ldots,a_{j_{m-1}})\in V(N_{j_1},\ldots,N_{j_{m-1}})
\end{equation*}
%has a unique solution $F(a_{j_1},\ldots,a_{j_{m-1}})\in \cc^m$
with  first coordinate  $1$. Moreover, for any algebraic cone $C\subset \cc^m$ with $\dim C<m$ and $\delta(C)\le d$ there exist $1\le {j_1}<\cdots <{j_{m-1}}\le \ell$ such that $F(a_{j_1},\ldots,a_{j_{m-1}})\notin C$.
\end{wn}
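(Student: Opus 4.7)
The plan is to deduce both assertions from Corollary \ref{cornotin} applied to the independent system supplied by Example \ref{exaexample}. Since the $a_j$ are pairwise distinct, Example \ref{exaexample} guarantees that $N_1,\ldots,N_\ell\in\mathbb{L}(m,1)$ form an independent system. Hence, for every choice $1\le j_1<\cdots<j_{m-1}\le \ell$, the ``moreover'' clause of Corollary \ref{cornotin} yields that $V(N_{j_1},\ldots,N_{j_{m-1}})$ is a one-dimensional linear subspace of $\cc^m$.

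To extract the unique point $F(a_{j_1},\ldots,a_{j_{m-1}})$ with first coordinate $1$, I would show that this one-dimensional subspace is not contained in the hyperplane $\{x_1=0\}$. Assume $x\in V(N_{j_1},\ldots,N_{j_{m-1}})$ satisfies $x_1=0$. Then for every $k=1,\ldots,m-1$,
$$
a_{j_k}x_2+a_{j_k}^2x_3+\cdots+a_{j_k}^{m-1}x_m=0;
$$
dividing by $a_{j_k}\ne 0$ produces the homogeneous system
$$
x_2+a_{j_k}x_3+\cdots+a_{j_k}^{m-2}x_m=0,\qquad k=1,\ldots,m-1,
$$
whose coefficient matrix $[a_{j_k}^{l-1}]_{1\le k,l\le m-1}$ is the Vandermonde matrix on the pairwise distinct nodes $a_{j_1},\ldots,a_{j_{m-1}}$, hence invertible. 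Therefore $x_2=\cdots=x_m=0$ and $x=0$. Consequently every nonzero element of $V(N_{j_1},\ldots,N_{j_{m-1}})$ has nonzero first coordinate, so rescaling the generator yields a unique point of first coordinate $1$.

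For the second assertion, I would apply Corollary \ref{cornotin} directly to the cone $C$: independence of the system $N_1,\ldots,N_\ell$ together with $\dim C<m$ and $\delta(C)\le d$ supplies indices $1\le j_1<\cdots<j_{m-1}\le \ell$ with
$$
V(N_{j_1},\ldots,N_{j_{m-1}})\cap C\subset\{0\}.
$$
Since $F(a_{j_1},\ldots,a_{j_{m-1}})$ lies in $V(N_{j_1},\ldots,N_{j_{m-1}})$ and is nonzero, it cannot belong to $C$. The only genuinely delicate step is the Vandermonde argument ensuring the kernel avoids $\{x_1=0\}$; here the hypothesis $a_j\ne 0$ is essential, because otherwise some $N_{j_k}$ would reduce to $x_1$, forcing every element of the kernel to have first coordinate zero and making $F$ ill-defined.
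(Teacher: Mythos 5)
Your proposal is correct and follows exactly the route the paper intends, since the paper states this corollary as an immediate consequence of Corollary \ref{cornotin} and Example \ref{exaexample} without writing out details. The Vandermonde argument showing the one-dimensional kernel is transverse to $\{x_1=0\}$ (and hence that $F$ is well-defined, which is where $a_j\ne 0$ is used) is exactly the piece one must supply, and your completion of it is accurate.
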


\begin{rem}\label{explicitformula} Let $n=m-1>0$. By using the Lagrange interpolation formula we easily find that the function
$$
F=(F_0,\ldots,F_{n}):\cc^{n}\ni(a_1,\ldots,a_{n})\mapsto F(a_1,\ldots,a_{n})\in\cc^{n+1}
$$
in Corollary \ref{cornodinacone} is defined by $F_0(a_1,\ldots,a_{n})=1$ and
$$
F_k(a_1,\ldots,a_n)=(-1)^{n-k+1}\sum_{j=1}^{n}\Bigg(a_j\underset{{\begin{smallmatrix}1\le i\le m\\ i\ne j \end{smallmatrix}}}\prod(a_j-a_i)\Bigg)^{-1} %\sum_{\begin{smallmatrix}1\le i\le m\\ i\ne j \end{smallmatrix}}
\!\!\!\!\!\sum_{{\begin{smallmatrix} 1\le i_1<\cdots< i_{n-k}\le n\\ i_s\ne j  \end{smallmatrix}}}a_{i_1}\cdots a_{i_{n-k}}.
$$
for $k=1,\ldots, n$, where $a_i\ne 0$ and $a_i\ne a_j$ for $i\ne j$.
%
%The points $x_{j_1,\ldots,j_{n-1}}\in \cc^m$ from Corollary \ref{cornodinacone} are of the form
%$x_{j_1,\ldots,j_{n-1}}=(y_1,\ldots, y_n)$, where $y_1=1$ and
%$$
%y_{j+1}=\sum_{i=1}^{m-1}\frac{-1}{a_{j_i}}\sum_{1\le s_1<\cdots<s_{n-k}\le m-1,\;s_i\ne j}
%$$
\end{rem}

We have the following generalization of Lemma \ref{arange0}.

\begin{wn}\label{lemeverysystem}
Let $k,m,q,d$ be positive integers with $m\ge 2$, $m\ge q$ and $k\ge q$. Let
\begin{equation}\label{eqformellk}
\ell_k=k+d(m-1)\binom{k}{q}-d(q-1)%\sum_{j=q}^{k-1}\binom{j}{q-1},
\end{equation}
 and let $N_j\in\mathbb{L}(m,1)$, $1\le j\le \ell_k$ be a system of independent linear functions. Then for any algebraic cone $C_0\subset \cc^m$ with  $\dim C_0\le q$ and $\delta(C_0)\le d$, there exist $1\le j_1<\cdots<j_k\le \ell_k$ such that for any $1\le i_1<\cdots<i_q\le k$,
\begin{equation}\label{arrangeN}
C_0\cap V(N_{j_{i_1}},\ldots,N_{j_{i_q}})=\{0\}.% \quad\hbox{for some }i\in\{1,\ldots,s\}.
\end{equation}
\end{wn}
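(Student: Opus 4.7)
My plan is to proceed by induction on $k \geq q$, with $m, q, d$ fixed. The base case $k = q$ is precisely Lemma \ref{arange0}, since a direct substitution gives $\ell_q = q + d(m-1) - d(q-1) = q + d(m-q)$, matching the bound in that lemma.

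For the inductive step from $k-1$ to $k$ (with $k \geq q+1$), I first note that $\ell_k - \ell_{k-1} = 1 + d(m-1)\binom{k-1}{q-1} > 0$, so any $\ell_{k-1}$-sized subset of the $\ell_k$ given independent forms (still an independent system) can be fed to the induction hypothesis. This furnishes a ``good'' subfamily $\{N_{j_1}, \ldots, N_{j_{k-1}}\}$ in the sense that every $q$-subset cuts $C_0$ down to $\{0\}$. The next step is to locate an index $j_k$ among the remaining $\ell_k - (k-1)$ indices such that, for every $(q-1)$-subset $I \subseteq \{j_1, \ldots, j_{k-1}\}$, one still has $C_0 \cap V(\{N_i : i \in I\} \cup \{N_{j_k}\}) = \{0\}$.

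The crux of the argument, and the step I expect to be the main obstacle, is controlling the geometry of the auxiliary cones $D_I := C_0 \cap V(\{N_i : i \in I\})$ for each $(q-1)$-subset $I$. I would show $\dim D_I \leq 1$ by picking any $j_* \in \{j_1, \ldots, j_{k-1}\} \setminus I$ (which exists since $k \geq q+1$): the goodness of the subfamily yields $D_I \cap V(N_{j_*}) = \{0\}$, yet a component of $D_I$ of dimension $\geq 2$ would meet the hyperplane $V(N_{j_*})$ in a set of dimension $\geq 1$, a contradiction. The bound $\delta(D_I) \leq d$ follows from Fact \ref{deltainters} applied to the intersection, since the linear subspace $V(\{N_i : i \in I\})$ has total degree $1$.

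Once $D_I$ is known to be a union of at most $d$ lines through the origin, the counting is mechanical: on each such line $w\cc$ only $\leq m-1$ of the $N_j$ can vanish, since otherwise $m$ of the independent forms would share the nonzero root $w$. Hence the ``bad'' indices contributed by each $I$ number at most $d(m-1)$, and summing over the $\binom{k-1}{q-1}$ choices of $I$ together with the $k-1$ already-used indices yields a forbidden total of at most $(k-1) + d(m-1)\binom{k-1}{q-1}$. A short calculation using Pascal's identity $\binom{k}{q} - \binom{k-1}{q-1} = \binom{k-1}{q}$ and the inequality $(m-1)\binom{k-1}{q} \geq q-1$ (valid since $m \geq q$ and $k-1 \geq q$) shows that $\ell_k$ strictly exceeds this bound, so a valid $j_k$ exists and the induction closes.
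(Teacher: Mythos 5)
Your proposal is correct and follows the same inductive strategy as the paper: induct on $k$ with base case given by Lemma \ref{arange0}, apply the induction hypothesis to get a good $(k-1)$-subfamily, form the auxiliary cones $D_I = C_0 \cap V(\{N_i : i \in I\})$ for the $\binom{k-1}{q-1}$ choices of $(q-1)$-subsets $I$, observe that each $D_I$ has dimension at most $1$ and total degree at most $d$, and then find one more form cutting all of them down to the origin. The only packaging difference is in the last step: the paper unions the $D_I$ into a single cone $C_1 = \bigcup_I D_I$, bounds $\delta(C_1) \le \binom{k-1}{q-1}d$, and applies Lemma \ref{arange0} (with $q=1$) as a black box to the fresh tail segment of forms $N_{\ell_{k-1}+1},\ldots,N_{\ell_k}$, which automatically gives $j_k > j_{k-1}$; you instead inline the line-counting argument already contained in the $q=1$ case of Lemma \ref{arange0} over the full index set and subtract the forbidden indices, obtaining a not-necessarily-ordered $j_k$ that one then relabels. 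Both executions are correct, and your verification that $\ell_k - \left[(k-1) + d(m-1)\binom{k-1}{q-1}\right] = 1 + d\left[(m-1)\binom{k-1}{q} - (q-1)\right] \ge 1$ is accurate.
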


\begin{proof}
The proof is by induction on $k\ge q$. If $k=q$ then the assertion follows from Lemma \ref{arange0}. Assume that the assertion holds for $k-1\ge q$. Take an algebraic cone $C_0\subset \cc^m$ with $\dim C_0\le q$ and $\delta(C_0)\le d$. By the induction hypothesis there are $1\le j_1<\cdots<j_{k-1}\le \ell_{k-1}$ such that for any $1\le i_1<\cdots<i_q\le k-1$ the equality \eqref{arrangeN} holds. Let
\begin{equation}\label{eqc1}
C_1=\bigcup_{1\le i_1<\cdots<i_{q-1}\le k-1}[C_0\cap V(N_{j_{i_1}}, \ldots,N_{j_{i_{q-1}}})].
\end{equation}
By \eqref{arrangeN} we see that $\dim C_1\le 1$ and by \eqref{deltaintersection} in Fact \ref{deltainters}, $\delta (C_1)\le \binom{k-1}{q-1}d$. Since $\ell_k-\ell_{k-1}=\binom{k-1}{q-1}d(m-1)+1$, by Lemma \ref{arange0} there exists $j_k\in\{\ell_{k-1}+1,\ldots,\ell_k\}$ such that $C_1\cap V(N_{j_k})=\{0\}$. So,
%%?? niezbyt formalnie jasne
for $j_1,\ldots,j_k$ we easily obtain the assertion %for $k$ 
and the proof  is complete. 
\end{proof}

\begin{rem}\label{lemthebestellk}
The number $\ell_k$ %=d(m-q)+q$
 in Corollary \ref{lemeverysystem} is optimal. Indeed, by Remark \ref{lemthebest},  $\ell_q=\ell$ is optimal for $k=q$. Assume that  $\ell_{k-1}$ is optimal for $k-1\ge q$. Then by arguing as in the proof of Corollary \ref{lemeverysystem} we see that there exists an algebraic cone $C_0\subset \cc^n$ of pure dimension $q$ with $\delta(C_0)=d$ such that the cone $C_1$ defined by \eqref{eqc1} is of pure dimension $1$ and $\delta (C_1)= \binom{k-1}{q-1}d$. So, by Remark \ref{lemthebest}, we need an additional $\binom{k-1}{q-1}d(m-1)+1$ linear forms to obtain the assertion of Corollary \ref{lemeverysystem} for $k$. Consequently, $\ell_k=\ell_{k-1}+\binom{k-1}{q-1}d(m-1)+1$ and $\ell_k$ is of the form \eqref{eqformellk}, which completes the proof. %{\bf Sprawdzic ??????}
\end{rem}

\begin{wn}\label{lempolynomials1}
Let $V\subset \cc^m$ be an algebraic cone of pure dimension $q$ and degree $d$. There are homogeneous polynomials $f_1,\ldots,f_{m-q}\in\cc[x_1,\ldots,x_m]$ such that $\deg f_j\le d$ for $1\le j\le m-q$, the algebraic cone $W=V(f_1,\ldots,f_{m-q})$ has pure dimension $q$ and degree not exceeding  $d^{m-q}$, and $V$ is the union of some irreducible components of $W$. Moreover, the matrix
$$
J(f_1,\ldots,f_{m-q}):=\left[\frac{\partial f_{j}}{\partial x_k}\right]_{\begin{smallmatrix}1\le j\le m-q\\
1\le k\le m
\end{smallmatrix}}
$$
has rank $m-q$ on a Zariski open and dense subset of $W$. %In particular, the set $V^*$ of singular points of $V$ is an algebraic cone which is contained in an algebraic cone $Z\subset V$ such that $\dim Z\le q-1$ and $\delta(Z)\le d(d-1)^{n-q}$.
\end{wn}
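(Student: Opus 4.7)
The case $q=m$ is trivial: the empty family of polynomials gives $W=\cc^m=V$. So I assume $q<m$. My plan is twofold: first, produce a finite system of homogeneous polynomials of degree at most $d$ cutting out $V$ set-theoretically; then apply Lemma~\ref{Fact3} to select $m-q$ generic linear combinations with the required properties.

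For the first step, I would show that for each $p\in\cc^m\setminus V$ there is a homogeneous polynomial $f_p$ of degree $\le d$ vanishing on $V$ with $f_p(p)\ne 0$. Take a generic surjective linear projection $\pi:\cc^m\to\cc^{q+1}$: for generic $\pi$, the restriction $\pi|_V$ is quasi-finite, so $\pi(V)$ is a cone of pure dimension $q$---hence a hypersurface of $\cc^{q+1}$---and by Fact~\ref{Fact2}, inequality~\eqref{deltalinear}, $\delta(\pi(V))\le d$, so $\pi(V)=V(\tilde f)$ for some homogeneous $\tilde f$ of degree $\le d$. Moreover, $\pi(p)\notin\pi(V)$ for generic $\pi$, since this condition is equivalent to $V\cap(p+\ker\pi)=\emptyset$, which is a Zariski open and nonempty condition on $\pi$ by the dimension count $\dim V+\dim\ker\pi=q+(m-q-1)=m-1<m$. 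Then $f_p=\tilde f\circ\pi$ works, and by Hilbert's basis theorem I obtain a finite system $g_1,\ldots,g_N$ of homogeneous polynomials of degree $\le d$ with $V=V(g_1,\ldots,g_N)$.

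Next I apply Lemma~\ref{Fact3} to $g=(g_1,\ldots,g_N)$: for generic $L\in\LL(N,m-q)$, the set $W:=V(L_1\circ g,\ldots,L_{m-q}\circ g)$ has dimension $q$ and contains $V$ as a union of some of its irreducible components. Setting $f_j:=L_j\circ g$, each $f_j$ is homogeneous of degree $\le d$. Because $W$ is defined by $m-q$ equations in $\cc^m$, every component has dimension at least $q$, so $W$ has pure dimension $q$ and is a cone; inequality~\eqref{deltaproduct} in Fact~\ref{Fact2} gives $\delta(W)\le d^{m-q}$.

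For the Jacobian condition, my plan is as follows. Take $x\in W$ smooth on exactly one component $W_i$ of $W$---these $x$ form a Zariski open dense subset of $W$---so $\dim T_xW_i=q$. Since $g^{-1}(g(x))\subset W$ (because $L\circ g$ vanishes on this fiber), one has $\ker dg(x)=T_xg^{-1}(g(x))\subset T_xW_i$, and therefore $\rank dg(x)\ge m-q$. Hence $\rank J(f_1,\ldots,f_{m-q})(x)=\rank(L\circ dg(x))=m-q$ whenever $L$ is surjective on the image of $dg(x)$. An incidence-variety dimension count over $\LL(N,m-q)\times W$ then yields that for generic $L$ the rank-drop locus is a proper Zariski closed subset of $W$. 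The hard part is exactly this genericity step: one needs a single $L$ for which the rank-drop locus meets every irreducible component of $W$---including any extra components produced by Lemma~\ref{Fact3}---in a proper subset, which requires controlling $\rank dg$ simultaneously on all components of $W$.
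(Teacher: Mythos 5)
Your proposal takes a genuinely different route and, as you note, leaves the last and hardest assertion unfinished; in fact the gap is slightly deeper than you describe. For the construction of $W$ you first assemble a large set-theoretic system $g_1,\ldots,g_N$ by generic linear projections and then pick $m-q$ linear combinations via Lemma~\ref{Fact3}. The paper does something structurally tighter: using Corollary~\ref{lemeverysystem} together with the Sadullaev theorem it chooses coordinates and $m-q$ Sadullaev subspaces $Y_j=\{x_j=\cdots=x_{j+q}=0\}$ simultaneously, so that each projection $\pi_{Y_j}(V)$ is a hypersurface $V(f_{Y_j})$ in the variables $x_j,\ldots,x_{j+q}$ only, with $f_{Y_j}$ squarefree and of positive degree in $x_j$. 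The payoff of this ``staircase'' normalization is that $\partial f_{Y_j}/\partial x_k=0$ for $k<j$, so the leading $(m-q)\times(m-q)$ block of the Jacobian is upper triangular and its determinant is $\prod_j \partial f_{Y_j}/\partial x_j$, manifestly not identically zero on any component of $W$. There is no generic-$L$ step to worry about.

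By contrast your approach has three issues. (i) Homogeneity: the $g_i$ need not all have the same degree, so $f_j=L_j\circ g$ is in general not homogeneous and $W$ need not be a cone; this is fixable (replace each $g_i$ of degree $d_i<d$ by the family $x_1^{d-d_i}g_i,\ldots,x_m^{d-d_i}g_i$), but it is currently missing. (ii) The chain $\ker dg(x)=T_xg^{-1}(g(x))\subset T_xW_i$ is not justified: $\ker dg(x)$ is the Zariski tangent space of the scheme-theoretic fiber $V(g-g(x))$, while the set-theoretic inclusion $g^{-1}(g(x))\subset W_i$ near $x$ only yields $\sqrt{(g-g(x))}\supset I(W_i)$ locally, not $(g-g(x))\supset I(W_i)$; if the fiber is non-reduced, $\ker dg(x)$ can be strictly larger than $T_xW_i$, and the desired bound $\rank\,dg(x)\ge m-q$ does not follow. (iii) As you acknowledge, even granting (ii) on each component, a single $L$ making the rank-drop locus proper in every component --- including the extra components created by Lemma~\ref{Fact3}, where $g(x)\ne 0$ and the argument in (ii) does not even apply in the same form --- requires a separate incidence/transversality argument. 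The paper's triangular construction avoids (ii) and (iii) altogether, which is exactly why it can state the Jacobian rank as an explicit conclusion rather than a genericity claim.
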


\begin{proof}
If $q=m$ then the assertion is trivial. Assume that $q<m$.
By the Sadullaev theorem (see \cite[p. 389]{Lo3}) for $k=q+1$ the set $S_{m-k}(V)$ of Sadullaev's spaces of dimension $m-k$ for $V$ (i.e., linear spaces $Y\subset \cc^m$ of dimension $m-k$ such that $
V\subset \{x+y:x\in X,\;y\in Y,\;|y|\le C(1+|x|)\}$ for some $C>0$, where $X$ is
%%?? chyba jest wiele linear complements, wiec "a"
a linear complement of $Y$), is an open and dense subset of the Grassmann space $G_{m-k}(\cc^m)$ (of $m-k$-dimensional linear subspaces of $\cc^m$) with complement being a proper algebraic set. These are exactly the  spaces $Y\in G_{m-k}(\cc^m)$ for which $V\cap Y=\{0\}$.
 So, for any $Y\in S_{m-k}(V)$ and its linear complement $X$, the restriction to $V$ of the projection $\pi_Y:\cc^n=X+Y\ni x+y\mapsto x\in X$ is a proper mapping.  By  Chevalley's theorem (see \cite[p. 395]{Lo3}), $\pi_Y(V)\subset X$ is a proper algebraic set and obviously it is an algebraic cone. Moreover, from \cite[Theorem  3.13]{M} we easily deduce that $\pi_Y(V)$ has pure dimension $q=\dim X-1$, and by Fact \ref{deltainters}, $\delta(\pi_Y(V))\le d$. So, there exists a homogeneous polynomial without multiple factors $f_Y$ of degree not exceeding $d$ such that $\pi_Y(V)+Y=V(f_Y)$. Consequently, $f_Y(x+y)=f_Y(x)$ for $x\in X$ and $y\in Y$ and $f_Y$ vanishes on $V$.

Since $V\cap Y=\{0\}$ for $Y\in S_{m-k}(V)$, by Corollary \ref{lemeverysystem} (for $k=n$) and Remark \ref{exa1} (or Corollary \ref{cornodinacone})
%Using the above, by easy induction with respect to the dimmension of minimal (with respect to inclussion) linear space containing the algebraic cone $V$
 we easily see that there exists a system of coordinates $x_1,\ldots,x_m$ of $\cc^m$ such that for
\begin{equation*}
\begin{split}
Y_{j}&=\{(x_1,\ldots,x_m)\in\cc^m:x_s=0\hbox{ for }j\le s\le j+q\},\\
X_{j}&=\{(x_1,\ldots,x_m)\in\cc^n:x_s=0\hbox{ for }1\le s\le j-1\hbox{ and for }j+q+1\le s\le m\},
\end{split}
\end{equation*}
$1\le j\le m-q$, we have $Y_j\in S_{m-k}(V)$, $X_j$ is
%%??
a linear complement of $Y_j$ in $\cc^m$. Moreover, one can assume that
$$
Z_j=\{(x_1,\ldots,x_m)\in X_j:x_s=0\hbox{ for }s\ne j\}
$$
is a Sadullaev space of $\pi_{Y_j}(V)\subset X_j$ for $1\le j\le m-q$. Consequently, the polynomials $f_{Y_j}$ are of the forms
\begin{multline*}
f_{Y_j}(x_j,\ldots,x_{j+q})=f_{0,Y_j}(x_{j+1},\ldots,x_{j+q})x_j^{N_j}+f_{1,Y_j}(x_{j+1},\ldots,x_{j+q})x_j^{N_j-1}\\
+\cdots+f_{N_j,Y_j}(x_{j+1},\ldots,x_{j+q})
\end{multline*}
with $N_j>0$ and $f_{0,Y_j}(x_{j+1},\ldots,x_{j+q})\ne 0$. Moreover, for any $(x_{j+1},\ldots,x_{j+q})\in \cc^q$
%%?? niejasne - każdy wielomian ma skonczenie wiele zer - ?
the polynomial $f_{Y_j}$ has a finite number of zeros $x_j$. So, the projection onto $\{0\}\times \cc^q$ of any irreducible component of  the set
\begin{equation*}\label{eqintersectpiY11}
W=\bigcap_{1\le j\le m-q}(\pi_{Y_j}(V)+Y_j)
\end{equation*}
is proper and $W$ has pure dimension $q$.  Since $V$ has pure dimension $q$ and $V\subset W$,  we infer that $V$ is the union of some irreducible components of $W$. So, by Fact \ref{deltainters} we obtain $\deg V\le \deg W\le d^{m-q}$.

 Since $f_{Y_j}$ has no multiple factors, we have % one can assume that $\deg _{x_{j} }f_{Y_J}\ge 1$ and consequently,
 ${\partial f_{Y_{j}}}/{\partial x_j}\ne 0$ on any irreducible component of $W$ for any $1\le j\le m-q$.
By the definition of $f_{Y_j}$ we have ${\partial f_{Y_{j}}}/{\partial x_k}=0$ in $\cc^n$ for $1\le k\le j-1$ and $1\le j\le m-q$. So, %we obtain the desired claim. Consequently, the set of singular points of $V$ is contained in the zeroset $Z_M$ of any $(m-q)\times(m-q)$ minor $M$ of the Jacobian matrix $J(f_{Y_1},\ldots,f_{Y_{m-q}})$. Moreover,
the minor
$$
M_0=\det\left[\frac{\partial f_{Y_j}}{\partial x_k}\right]_{\begin{smallmatrix}1\le j\le m-q\\
1\le k\le m-q
\end{smallmatrix}}
$$
 is nonzero on any irreducible component of $W$. This gives the assertion.
\end{proof}

\section{Bertini's theorem}

\subsection{Bertini's weak theorem}
For any $a\in\cc$, denote by $N_a$ the linear function $x_1+ax_2+\cdots+a^{m-1}x_m$. We have 

\begin{wn}[Bertini's weak theorem]\label{properintersect1}
Let $C\subset \cc^m$ be an algebraic cone of pure dimension $q\ge 1$. Then the set $A\subset \cc$ of  points $a\in \cc$ such that $C\cap V(N_a)$ is an improper intersection is finite. Moreover, $\# A\le d(m-q)$, where $\delta(C)\le d$.% is the degree of $C$.
\end{wn}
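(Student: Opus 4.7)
\emph{Proof plan.} The approach is to count each improper value $a$ by recording which irreducible component of $C$ the hyperplane $V(N_a)$ is forced to contain, and then to bound, component by component, how many hyperplanes from the Vandermonde family $\{V(N_a)\}_{a\in\cc}$ can swallow a single component.

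First I would write the decomposition $C=V_1\cup\cdots\cup V_s$ into irreducible components; since $C$ is pure $q$-dimensional, each $V_j$ has dimension $q$, and $s\le\sum_{j=1}^s\deg V_j=\delta(C)\le d$. Because $C\cap V(N_a)\subseteq C$ already has dimension at most $q$, the intersection is improper exactly when $\dim(C\cap V(N_a))=q$, which, by irreducibility of the $V_j$, is equivalent to $V_j\subseteq V(N_a)$ for some $j$. Writing $A_j=\{a\in\cc:V_j\subseteq V(N_a)\}$, we then have $A=\bigcup_{j=1}^sA_j$, so it suffices to show $\#A_j\le m-q$ for each $j$.

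To control $\#A_j$, I would pass to linear algebra: $V_j\subseteq V(N_a)$ is equivalent to $N_a$ annihilating the linear span $\langle V_j\rangle\subseteq\cc^m$. Since $\dim\langle V_j\rangle\ge\dim V_j=q$, such $N_a$'s lie in a subspace of $(\cc^m)^*$ of dimension at most $m-q$. On the other hand, Example \ref{exaexample} tells us that for pairwise distinct $a_1,\ldots,a_k\in\cc$ with $k\le m$ the functionals $N_{a_1},\ldots,N_{a_k}$ are linearly independent (Vandermonde). Taking $a_1,\ldots,a_k$ to be distinct elements of $A_j$, we obtain $k$ linearly independent functionals inside an $(\le m-q)$-dimensional subspace, forcing $k\le m-q$. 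The case $k>m$ cannot occur, since it would give $m\le m-q$ and hence $q\le 0$, contradicting $q\ge 1$.

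Summing over $j$ then gives $\#A\le s(m-q)\le d(m-q)$, which simultaneously yields finiteness of $A$ and the quantitative bound. I expect the only step that needs to be written carefully is the first equivalence, namely that improperness of $C\cap V(N_a)$ forces some $V_j$ to lie entirely inside $V(N_a)$; after that, the combination of annihilator-dimension and Vandermonde-independence is immediate, and is essentially the same pigeonhole used in the base case ($q=1$) of the proof of Lemma \ref{arange0}, here repackaged as an a priori bound on the locus of bad parameters $a$.
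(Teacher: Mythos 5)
Your proof is correct, and it takes a genuinely different route from the paper's. The paper argues by contradiction via Lemma \ref{arange0}: assuming $\#A > d(m-q)$, it picks $d(m-q)+1$ distinct bad values, pads them to a full independent system of $\ell = d(m-q)+q$ forms (Example \ref{exaexample}), observes that every $q$-subset of indices must contain a bad one so that $C$ cut by those $q$ hyperplanes still has positive dimension, and thereby contradicts the conclusion of Lemma \ref{arange0}. You instead bound $\#A$ directly and constructively: decomposing $A = \bigcup_{j=1}^s A_j$ over the irreducible components $V_j$, you note that the forms $N_a$ killing a fixed $q$-dimensional $V_j$ lie in the annihilator of $\langle V_j\rangle$, a subspace of $\LL(m,1)$ of dimension $\le m-q$, and since any $\le m$ of the $N_a$ are linearly independent by the Vandermonde property, $\#A_j \le m-q$; summing over the $s \le \delta(C) \le d$ components gives the bound. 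Both arguments ultimately rest on the same Vandermonde independence, but yours bypasses Lemma \ref{arange0} entirely, is self-contained, and makes the pigeonhole explicit — it is essentially the $q=1$ base case of Lemma \ref{arange0} applied to the parameter set rather than to the cone itself. The one place worth a sentence in a clean write-up is the dichotomy you flag at the outset: if no $V_j$ lies in $V(N_a)$ then each $V_j\cap V(N_a)$ is nonempty (both are cones through $0$) of dimension exactly $q-1$, so $C\cap V(N_a)$ is proper; while $V_j\subseteq V(N_a)$ for some $j$ forces dimension $q$, hence improper.
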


\begin{proof}
Let $k=d(m-q)$. Suppose to the contrary that $\#A>k$. Then there are $a_1,\ldots,a_{k+1}\in A$ such that $a_i\ne a_j$ for $1\le i,j\le k+1$. Consequently, $\dim [C\cap V(N_{a_j})]=q$ for $1\le j\le k+1$. Then for any $a_{k+2},\ldots, a_{k+q}\in\cc$ and any $1\le i_1<\cdots<i_q\le d(m-q)+q$ we have $\dim [C\cap V(N_{a_1},\ldots,N_{a_q})]>0$. This contradicts Lemma \ref{arange0} and Example \ref{exaexample}.% and ends the proof.
\end{proof}

The properness of the  intersection $C\cap V(N_a)$ in Corollary \ref{properintersect1} cannot be replaced by transversality, as shown by the following example:

\begin{exa}\label{counterexample}
Let 
$$
C=\{(x,y,z)\in\cc^3:y^2-4xz=0\}.
$$
Obviously $C\setminus\{0\}$ is a smooth cone of dimension $2$. Let $N_a(x,y,z)=x+ay+a^2 z$, $a\in\cc$. Then $C\cap V(N_a)\ne \emptyset$ for any $a\in\cc$. Moreover, if $(0,0,0)\ne (x_0,y_0,z_0)\in C\cap V(N_a)$ then $z_0\ne 0$, $a=\frac{-y_0}{2z_0}$ and $x_0=\frac{y_0^2}{4z_0}$. So,
$$
N_a(x,y,z)=x+\frac{-y_0}{2z_0}y+\frac{y_0^2}{4z_0^2}z=\frac{-1}{4z_0}\left(-4z_0x+2y_0y-4x_0 z \right)
$$
and $T_{(x_0,y_0,z_0)}C=V(N_a)$. This shows that the intersection $C\cap V(N_a)$ is  not transversal at $(x_0,y_0,z_0)$.
\end{exa}

\subsection{Effective Bertini theorem}

In this section we will prove some effective version of Bertini's theorem. Let us start with a lemma.

\begin{lem}\label{lendegreeconetangents}
Let $V\subset \cc^m$ be an irreducible algebraic cone of dimension $q>0$ and degree $d>0$. Assume that $V\setminus \{0\}$ is smooth. Let
$$
C=\{N\in \LL(m,1):\exists_{x\in V\setminus\{0\}}\;N(x)=0\;\land\; T_x(V)\subset V(N)\}.
$$
Then $\overline{C}$ is an irreducible algebraic cone in $\cc^m$ with $\dim \overline{C}<m$ and
\begin{equation}\label{eqestdegC}
\deg \overline{C}\le 2d^{m-q}[(m-q)(d-1)+1]^{q-1}.
\end{equation}
\end{lem}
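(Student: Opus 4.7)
The plan is to apply Lemma~\ref{lempolynomials1} to present $V$ as an irreducible component of a complete intersection $W=V(f_1,\ldots,f_{m-q})$ with $\deg f_j\le d$ and with the Jacobian $J(x)=[\partial f_j/\partial x_k]$ of rank $m-q$ on a Zariski dense open subset of $W$. On this set $T_xV=\ker J(x)$, so $T_xV^\perp$ is the column span of $J(x)^T$, and Euler's identity applied to the homogeneous $f_j$ forces $N\cdot x=0$ automatically whenever $x\in V$ and $N\in T_xV^\perp$. Consequently $\overline C$ is the closure of the image of the regular map
$$
\Phi:(V\setminus\{0\})\times\cc^{m-q}\to\cc^m,\qquad(x,\lambda)\mapsto J(x)^T\lambda.
$$
Irreducibility of $\overline C$ then follows from the irreducibility of the source, and the inequality $\dim\overline C<m$ from the homogeneity identity $\Phi(tx,t^{-(d-1)}\lambda)=\Phi(x,\lambda)$, which forces $\Phi$ to have generically at least $1$-dimensional fibers.

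For the degree bound I would fix an $(m-q)\times(m-q)$ minor $M_I$ of $J$ not vanishing identically on $V$ (which is possible by the rank condition). On $\{M_I\ne0\}$, the condition $N\in T_xV^\perp$ is equivalent to the vanishing of the $q$ bordered $(m-q+1)\times(m-q+1)$ minors of $[J(x)^T\mid N]$ obtained by adjoining each of the remaining rows and the $N$-column. Expanding such a bordered minor along the $N$-column shows that it is linear in $N$ and of degree at most $(m-q)(d-1)$ in $x$, hence of total degree at most $(m-q)(d-1)+1$. The crucial saving that replaces the na\"\i ve exponent $q$ by $q-1$ is Euler's identity $N\cdot x=0$, which holds automatically on the incidence locus and allows one to substitute the bilinear form $N\cdot x$, of degree $2$, for one of the bordered minors. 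Denoting the remaining bordered minors by $D_1,\ldots,D_{q-1}$, I set
$$
\hat Z=V\bigl(f_1,\ldots,f_{m-q},\,N\cdot x,\,D_1,\ldots,D_{q-1}\bigr)\subset\cc^m\times\cc^m,
$$
and let $\tilde I$ be the closure of $\{(x,N)\in(V\setminus\{0\})\times\cc^m:N\in T_xV^\perp\}$, an irreducible subvariety of $\hat Z$ of dimension $m$ whose image under the linear projection $\pi_N(x,N)=N$ has closure equal to $\overline C$. Estimate~\eqref{deltaproduct} of Fact~\ref{deltainters} gives
$$
\delta(\hat Z)\le d^{m-q}\cdot 2\cdot\bigl((m-q)(d-1)+1\bigr)^{q-1},
$$
and granted that $\tilde I$ is an irreducible component of $\hat Z$, combining this with \eqref{deltalinear} applied to the irreducible $\tilde I$ yields
$$
\deg\overline C\le\deg\tilde I\le\delta(\hat Z)\le 2\,d^{m-q}\bigl((m-q)(d-1)+1\bigr)^{q-1},
$$
which is exactly~\eqref{eqestdegC}.

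The main technical obstacle is verifying that $\tilde I$ really is an irreducible component of $\hat Z$, so that $\deg\tilde I$ is one of the summands of $\delta(\hat Z)$. A priori $\hat Z$ may have extraneous higher-dimensional components, for instance supported in $\{x=0\}\times\cc^m$ (where all the chosen equations vanish simultaneously when $\deg f_j\ge 2$) or over $\{M_I=0\}\cap W$ (where the bordered-minor description of the rank condition degenerates). Overcoming this should rely on establishing that the listed equations cut out a proper intersection of pure dimension $m$ at a generic point of $\tilde I$ for a generic choice of the reference minor $M_I$ and of the accompanying bordered-minor indices, together with the density of $\{M_I\ne 0\}$ in $V\setminus\{0\}$ guaranteed by the rank hypothesis of Lemma~\ref{lempolynomials1}.
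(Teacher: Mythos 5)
Your proposal is correct and follows essentially the same route as the paper's proof: invoke Corollary~\ref{lempolynomials1} to realize $V$ as a component of a complete intersection $V(f_1,\ldots,f_{m-q})$ with generically full-rank Jacobian, use Euler's identity to trade one bordered minor for the bilinear equation $N\cdot x=0$ (giving the factor $2$ and the exponent $q-1$ rather than $q$), and then apply the degree estimates of Fact~\ref{deltainters}. Your phrasing of irreducibility and of $\dim\overline C<m$ via the parametrizing map $\Phi$ and the scaling identity is a mild rewrite of the paper's tangent-bundle and ray-invariance argument, but conceptually identical. The ``main technical obstacle'' you flag --- that the incidence variety is genuinely a component of the auxiliary variety $\hat Z$ cut out by the reduced system --- is exactly the point the paper handles, briefly, by noting that $x_1$ does not vanish on $V$ (so the distinguished component is not lost in $\{x_1=0\}$) together with the generic full-rank condition coming from Corollary~\ref{lempolynomials1}; your suggested resolution is the same.
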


\begin{proof}
The inequality $\dim \overline{C}<m$ follows immediately from the fact that for any $x\in V\setminus\{0\}$ the set of $N\in \LL(m,1)$ such that $N(x)=0$ and $T_x(V)\subset V(N)$ is a linear space of dimension $m-q$ and that $T_{\lambda x}(V)=T_x(V)$ for $\lambda\in\cc\setminus\{0\}$. Since the set $V\setminus\{0\}$ is smooth and connected, the tangent bundle
$$
W=\{(x,y)\in (V\setminus\{0\})\times \cc^n: y\in T_x(V)\}
$$
of $V\setminus\{0\}$ is a smooth and connected manifold, so the set $\{(x,N)\in (V\setminus\{0\})\times \LL(m,1):T_x(V)\subset V(N)\}$ is also a smooth and connected manifold. In particular, the closure of this set is an irreducible algebraic set. Consequently, $\overline{C}$ is irreducible, as the projection onto $\LL(m,1)$ of the above set.

We will prove \eqref{eqestdegC} by induction on $m$. For $m=1$ the assertion is trivial. Assume that it holds for $m-1$. %We will prove it  for $m$.
Without loss of generality, we can assume that  $V\not\subset \{(x_1,\ldots,x_m)\in\cc^m:x_1=0\}$.
By Corollary \ref{lempolynomials1} there are nonzero homogeneous polynomials $f_1,\ldots,f_{m-q}\in\cc[x]$  such that $\deg f_j\le d$ for $1\le j\le m-q$, the algebraic cone $V_1=V(f_1,\ldots,f_{m-q})$ has pure dimension $q$ and  $\deg V_1\le d^{m-q}$, and $V$ is an irreducible component of $V_1$. Moreover, the matrix
$$
J(f_1,\ldots,f_{m-q}):=\left[\frac{\partial f_{j}}{\partial x_k}\right]_{\begin{smallmatrix}1\le j\le m-q\\
1\le k\le m
\end{smallmatrix}}
$$
has rank $m-q$ on a Zariski open and dense subset of $V_1$. Let $b_{j,k}=\frac{\partial f_{j}}{\partial x_k}$ for $1\le j\le m-q$, $1\le k\le m$ and $b_{m-q+1,k}=a_k$ for $1\le k\le m$, where $a_k\in \cc$, $N(x_1,\ldots,x_m)=a_1 x_1+\cdots+a_mx_m$, and let
$B=[b_{j,k}]_{\begin{smallmatrix}1\le j\le m-q+1\\
1\le k\le m \end{smallmatrix}}$.  Let
\begin{multline*}
X=\{(x,N)\in \cc^m\times\LL(m,1):x\ne 0,\; f_1(x)=\cdots=f_{m-q}(x)=0,\;N(x)=0,\\
\rk B(x,N)\le m-q\}.
\end{multline*}
Then there exists an irreducible component $X_1$ of $\overline{X}$ such that $\overline{C}$ is equal to the closure of the projection of $X_1$ onto $\LL(m,1)$. So, by Fact \ref{deltainters},
\begin{equation}\label{eqestdegC1}
\deg \overline{C}\le \deg X_1\le \delta(\overline{X}).
\end{equation}

Since $x_1$ does not vanish on $V$, we have $X_1\not\subset\{0\}\times\cc^{m-1}$. Moreover,
%and let $Y\subset \LL(m,1)$ be the closure of the projection of $X$ onto $\LL(m,1)$. Then $C$ is an irreducible component of $Y$.
%Since $x_1$ not vanishes on $V$,
by the Euler formula $\frac{\partial f_j}{\partial x_1}x_1+\cdots+\frac{\partial f_j}{\partial x_m}x_m=f_j\deg f_j$, the set $X_1$ is an irreducible component of an algebraic set described by the equations $f_1(x)=\cdots=f_{m-q}(x)=0$, $N(x)=0$ and by
%%??
$q-1$ minors of $B$ of size $(m-q+1)\times(m-q+1)$ and  of degree $(m-q)(d-1)+1$. So, using Fact \ref{deltainters} and \eqref{eqestdegC1} we obtain the assertion.
\end{proof}

Set $ab^T=a_1b_1+\cdots+a_mb_m$ for $a=(a_1,\ldots,a_m), b=(b_1,\ldots,b_m)\in\cc^m$.

\begin{tw}[Bertini]\label{Bertini}
Let $d,m,q$ be positive integers such that $m\ge q$, let
$$
\ell= 2 d^{m-q}[(m-q)(d-1)+1]^{q-1}+m-1,
$$
and let $N_j\in\mathbb{L}(m,1)$, $1\le j\le \ell$, be a system of independent linear functions. Then
\[
E_{j_1,\ldots,j_{m-1}}=\{x\in\cc^m:\exists_{a\in\cc^m\setminus\{0\}}\;N_{j_1}(a)=\cdots =N_{j_{m-1}}(a)=0,\; ax^T=0\},
\]
for $1\le j_1<\cdots<j_{m-1}\le \ell$ is a system of hyperplanes such that for any irreducible  algebraic cone $V\subset \cc^m$ with   $\dim V = q$ and $\deg V\le d$ such that $V\setminus \{0\}$ is smooth, there are $1\le j_1<\cdots<j_{m-1}\le \ell$ such that
the intersection $X=V\cap E_{j_1,\ldots,j_{m-1}}$ is transversal at any point $x\in X\setminus \{0\}$ and the set  $X\setminus\{0\}$ is smooth. If moreover $q\ge 3$ then  $X$ is irreducible.
\end{tw}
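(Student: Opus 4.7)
The plan is to combine Lemma \ref{lendegreeconetangents}, which controls the ``tangentially bad'' linear forms, with Corollary \ref{cornotin}, which picks from an independent system of linear functions a one-dimensional common zero locus avoiding a prescribed algebraic cone of small degree, and then to invoke the classical Bertini theorem for the irreducibility assertion.

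First I would observe that since $N_1,\ldots,N_\ell$ is an independent system, for every choice $1\le j_1<\cdots<j_{m-1}\le\ell$ the subspace $\{a\in\cc^m:N_{j_1}(a)=\cdots=N_{j_{m-1}}(a)=0\}$ is one-dimensional, so $E_{j_1,\ldots,j_{m-1}}=\{x:ax^T=0\}$ (with any nonzero $a$ in this line) is a genuine hyperplane. Next I would apply Lemma \ref{lendegreeconetangents} to the irreducible smooth cone $V$; this produces an irreducible algebraic cone $\overline{C}\subset\LL(m,1)\cong\cc^m$ with $\dim\overline{C}<m$ and $\deg\overline{C}\le d_C:=2d^{m-q}[(m-q)(d-1)+1]^{q-1}$. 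Irreducibility gives $\delta(\overline{C})=\deg\overline{C}\le d_C$, and since $\ell=d_C+m-1$, Corollary \ref{cornotin} yields indices $1\le j_1<\cdots<j_{m-1}\le\ell$ for which $V(N_{j_1},\ldots,N_{j_{m-1}})\cap\overline{C}\subset\{0\}$.

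Fixing any nonzero $a\in V(N_{j_1},\ldots,N_{j_{m-1}})$ and setting $N(x)=ax^T$, the relation $N\notin C$ unpacks to the statement that for every $x\in V\setminus\{0\}$ with $N(x)=0$ the tangent space $T_x(V)$ is not contained in the hyperplane $E_{j_1,\ldots,j_{m-1}}=V(N)$. This is precisely transversality of $X=V\cap E_{j_1,\ldots,j_{m-1}}$ at $x$, and the standard implicit function argument applied to the smooth manifold $V\setminus\{0\}$ then gives smoothness of $X\setminus\{0\}$.

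For the final assertion, if $q\ge 3$ then passing to the projectivization gives a smooth irreducible subvariety $\tilde V\subset\PP^{m-1}$ of dimension $q-1\ge 2$, cut smoothly and transversally by the projective hyperplane $\tilde E$ corresponding to $E_{j_1,\ldots,j_{m-1}}$. The connectedness portion of the classical Bertini theorem recalled in the introduction forces $\tilde V\cap\tilde E$ to be connected, hence irreducible, and then the affine cone $X$ over this section is irreducible as well. The step I expect to be most delicate is this last one: our hyperplane is selected combinatorially rather than generically in $|H|$, so the argument must rely on the Lefschetz-type connectedness statement, which applies to \emph{every} smooth hyperplane section of a smooth irreducible projective variety of dimension at least two, not only the generic one.
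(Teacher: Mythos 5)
Your proposal is correct and follows essentially the same route as the paper's proof: apply Lemma~\ref{lendegreeconetangents} to bound the cone of tangentially bad linear forms, then use Corollary~\ref{cornotin} with $d$ replaced by $d_C=2d^{m-q}[(m-q)(d-1)+1]^{q-1}$ (valid since irreducibility of $\overline{C}$ gives $\delta(\overline{C})=\deg\overline{C}$) to pick from the independent system a line of forms missing $\overline{C}\setminus\{0\}$, and invoke the Hartshorne/Lefschetz connectedness argument for irreducibility when $q\ge 3$. The paper records exactly this in one sentence, citing Corollary~\ref{cornotin}, Lemma~\ref{lendegreeconetangents}, and the proof of Theorem~II.8.18 in Hartshorne; you have simply supplied the details.
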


% Then for any $1\le j_1<\cdots<j_{m-1}\le \ell$ the set
%\begin{equation}\label{eqformEj}
%E_{j_1,\ldots,j_{m-1}}=\{a\in \cc^m:N_{j_1}(a)=\cdots=N_{j_{m-1}}(a)=0\}
%\end{equation}
%is a linear subspace of $\cc^m$ with dimension $1$. Moreover,

\begin{proof}
The first  assertion follows immediately from Corollary \ref{cornotin} and Lemma \ref{lendegreeconetangents}. The second  assertion is immediate by arguing as in    \cite[proof of Theorem II.8.18]{H}. % the book by Robin Hartshorne, \emph{Algebraic Geometry}, Springer Science+Business Media, Inc. 1977.
\end{proof}

\begin{rem}\label{remspecialcase}
The assertion of Theorem \ref{Bertini} holds for the system of linear functions
$$
N_j(x_0,\ldots,x_n)=x_0+a_jx_1+\cdots +a_j^{n}x_m,
$$
where $a_j\in\cc$, $1\le j\le \ell$, are pairwise different numbers (see Corollary \ref{cornodinacone} and Remark \ref{explicitformula}).
\end{rem}

From Theorem \ref{Bertini} we immediately obtain its version for projective varieties.

\begin{tw}[Bertini]\label{BertiniProj}
Let $d,m,q$ be positive integers such that $m\ge q$, let
$$
\ell=  d^{m-q}[(m-q)(d-1)+1]^{q}+m,
$$
 and let $N_j\in\mathbb{L}(m+1,1)$, $1\le j\le \ell$, be a system of independent linear functions.
 Then
\[
F_{j_1,\ldots,j_{m-1}}=\{x\in\PP^m:\exists_{a\in\PP^m}\;N_{j_1}(a)=\cdots =N_{j_{m-1}}(a)=0,\; ax^T=0\}
\]
for $1\le j_1<\cdots<j_{m-1}\le \ell$ is a system of hyperplanes such that for any  irreducible smooth algebraic set $V\subset \PP^m$ with $\dim V = q$ and $\deg V\le d$, there are $1\le j_1<\cdots<j_{m}\le \ell$ such that the intersection $X=V\cap F_{j_1,\ldots,j_{m-1}}$ is transversal at any point $x\in X$ and the set  $X$ is smooth. If moreover $q\ge 2$ then  $X$ is irreducible.
\end{tw}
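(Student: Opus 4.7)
The natural strategy is to deduce Theorem \ref{BertiniProj} from its affine cone counterpart, Theorem \ref{Bertini}, by passing from $V$ to its affine cone in $\cc^{m+1}$. Given an irreducible smooth projective variety $V\subset\PP^m$ with $\dim V=q$ and $\deg V\le d$, the cone $V^\cc\subset\cc^{m+1}$ is irreducible, has $\dim V^\cc=q+1$ and $\deg V^\cc=\deg V\le d$. Smoothness of $V$ transfers to smoothness of $V^\cc\setminus\{0\}$: because $V^\cc$ is a cone, regularity is $\cc^*$-invariant, and a nonzero point $x\in V^\cc$ is regular on $V^\cc$ exactly when $[x]\in V$ is regular on $V$.

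Apply Theorem \ref{Bertini} in $\cc^{m+1}$ with the parameters $(m,q,d)$ replaced by $(m+1,q+1,d)$; the threshold it demands has the same shape as $\ell$ in Theorem \ref{BertiniProj}, up to a constant that must be matched against the statement. Given independent linear forms $N_1,\ldots,N_\ell\in\mathbb{L}(m+1,1)$, Theorem \ref{Bertini} then produces indices such that the corresponding hyperplane $E\subset\cc^{m+1}$, described as $\{x:ax^T=0\}$ with $a$ in the common zero locus of the chosen forms, meets $V^\cc$ transversally at every $x\in V^\cc\cap E\setminus\{0\}$ and the set $V^\cc\cap E\setminus\{0\}$ is smooth. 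Since $E$ is a linear hyperplane through the origin of $\cc^{m+1}$, it is the affine cone of a unique projective hyperplane $F\subset\PP^m$ in the family singled out in the theorem, and $(V\cap F)^\cc=V^\cc\cap E$. The canonical projection $\cc^{m+1}\setminus\{0\}\to\PP^m$ is a submersion, so smoothness and transversality on $V^\cc\cap E\setminus\{0\}$ descend to smoothness of $X=V\cap F$ and to transversality of the intersection at every point $x\in X$; the removed origin causes no trouble since it is not a point of $\PP^m$.

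For the irreducibility claim under $q\ge 2$, observe that $\dim X=q-1\ge 1$ and that $F$ does not contain $V$ (otherwise $X=V$, contradicting $\dim X<\dim V$). The classical connectedness part of Bertini's theorem (see \cite[Theorem II.8.18]{H}) then gives connectedness of $X$, and a smooth connected variety is irreducible. The main obstacle in carrying out this plan is the numerical matching: one must verify that the stated bound $\ell=d^{m-q}[(m-q)(d-1)+1]^{q}+m$ dominates what Theorem \ref{Bertini} requires in ambient dimension $m+1$, which hinges on a careful comparison with the degree estimate of Lemma \ref{lendegreeconetangents} applied to the irreducible cone $V^\cc$; once this numerical point is settled, the rest is a formal translation between affine cones and their projectivizations.
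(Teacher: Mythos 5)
Your approach is precisely the one the paper intends: the proof in the paper consists of the single sentence ``From Theorem \ref{Bertini} we immediately obtain its version for projective varieties,'' so the reduction $V\mapsto V^\cc$, the substitution $(m,q)\mapsto(m+1,q+1)$, and the descent of transversality/smoothness along the submersion $\cc^{m+1}\setminus\{0\}\to\PP^m$ are exactly what is meant. Two small remarks on the formal part: for the irreducibility claim you do not need to re-invoke Hartshorne, since Theorem \ref{Bertini} already carries an irreducibility assertion for cones of dimension $\ge 3$, and $\dim V^\cc=q+1\ge 3$ is precisely the stated $q\ge 2$; and in the statement of Theorem \ref{BertiniProj} the intersecting subspaces should carry $m$ indices $F_{j_1,\ldots,j_m}$ (a hyperplane in $\PP^m$ is cut out from the dual side by $m$ of the linear forms in $\cc^{m+1}$), as the later ``there are $1\le j_1<\cdots<j_m\le\ell$'' already suggests.

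The point you flagged but left unresolved is the real issue, and you should have carried out the arithmetic. Substituting $(m+1,q+1)$ into the bound of Theorem \ref{Bertini} gives
$$
2\,d^{(m+1)-(q+1)}\bigl[((m+1)-(q+1))(d-1)+1\bigr]^{(q+1)-1}+(m+1)-1
 = 2\,d^{m-q}\bigl[(m-q)(d-1)+1\bigr]^{q}+m,
$$
whereas Theorem \ref{BertiniProj} states $\ell=d^{m-q}[(m-q)(d-1)+1]^{q}+m$, i.e.\ the stated bound is \emph{smaller} by a factor of $2$ in the leading term. So the ``stated bound dominates what Theorem \ref{Bertini} requires'' direction you hoped for does not hold; as written, the reduction from Theorem \ref{Bertini} only yields the projective theorem with the larger constant $2\,d^{m-q}[(m-q)(d-1)+1]^{q}+m$. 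This is almost certainly a misprint in the paper (the factor $2$ in Lemma \ref{lendegreeconetangents} got dropped when re-indexing), but a careful proof must either carry the $2$ or supply a sharper degree estimate for the cone $\overline{C}$ of tangent-containing hyperplanes. Since you explicitly deferred this verification, the proposal is incomplete at exactly the one non-formal step.
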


By a similar argument to the proof of Lemma \ref{lendegreeconetangents} we will obtain

\begin{lem}\label{lendegreeconetangentsmulti}
Let $V\subset \cc^m$ be an irreducible algebraic cone of dimension $q>0$ and degree $d>0$. Assume that $V\setminus \{0\}$ is smooth. Let %$1\le s<q$ and let
$$
C=\{N\in \LL(m,q-1):\exists_{x\in V\setminus\{0\}}\;N(x)=0\;\land\; \dim[T_x(V)\cap V(N)]>1\}.
$$
Then $\overline{C}$ is an algebraic cone in $\LL(m,q-1)$ with $\dim \overline{C}<m(q-1)$ and
\begin{equation}\label{eqestdegCmilti}
\delta(\overline{C})\le d^{m-q}[(m-q)(d-1)+q-1].
\end{equation}
\end{lem}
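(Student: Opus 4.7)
The plan is to mirror the proof of Lemma~\ref{lendegreeconetangents}, adapted to the multi-linear condition involving $N \in \LL(m, q-1)$.

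First I would prove $\dim \overline{C} < m(q-1)$ by a fiber-dimension argument applied to the incidence variety
\[
Z = \{(x, N) \in (V \setminus \{0\}) \times \LL(m, q-1) : N(x) = 0,\; \dim[T_x V \cap V(N)] > 1\}.
\]
For fixed smooth $x \in V \setminus \{0\}$, the condition $N(x) = 0$ cuts $\LL(m, q-1)$ by codimension $q-1$ (the linear forms $N_i(x)$ being independent for $x \ne 0$). The further condition $\dim[T_x V \cap V(N)] > 1$ contributes codimension~$1$, since $\cc x \subseteq T_x V \cap V(N)$ is automatic (from $x \in T_x V$ by the cone property together with $N(x) = 0$) while the intersection is generically a line. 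Hence $\dim Z = q + m(q-1) - q = m(q-1)$. As $V$ is a cone, each fiber of the projection $\pi\colon Z \to \LL(m, q-1)$ over a point $N \in C$ contains a line $\cc x$, so $\dim \pi^{-1}(N) \ge 1$ and thus $\dim \overline{C} \le m(q-1) - 1$.

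Second, for the degree bound I would invoke Corollary~\ref{lempolynomials1} to obtain homogeneous polynomials $f_1, \ldots, f_{m-q} \in \cc[x]$ of degree $\le d$ with $V_1 = V(f_1, \ldots, f_{m-q})$ of pure dimension~$q$, $V$ an irreducible component of $V_1$, $\delta(V_1) \le d^{m-q}$, and $J = J(f_1, \ldots, f_{m-q})$ of rank $m-q$ on a Zariski open dense subset of $V_1$. Assemble the $(m-1) \times m$ matrix
\[
B(x, N) = \begin{pmatrix} J(x) \\ N \end{pmatrix},
\]
where $N$ is identified with its $(q-1) \times m$ coefficient matrix. On the smooth part of $V_1$ one has $T_x V = \ker J(x)$, and if $N(x) = 0$ then $T_x V \cap V(N) = \ker B(x, N)$, so $\dim[T_x V \cap V(N)] > 1$ becomes $\rk B(x, N) \le m - 2$.

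Third, following the Euler-identity trick of the proof of Lemma~\ref{lendegreeconetangents}, the relations $\sum_k (\partial f_j / \partial x_k)\, x_k = (\deg f_j)\, f_j$ combined with $f_j(x) = 0$ and $N(x) = 0$ yield $B(x, N)\, x^T = 0$ on the incidence locus, so $\cc x \subseteq \ker B$. After a coordinate change ensuring $V \not\subset \{x_{k_0} = 0\}$ for some index $k_0$, Cramer's rule shows that on the Zariski open subset $\{x_{k_0} \ne 0\}$ the rank drop $\rk B \le m-2$ is captured by the single $(m-1) \times (m-1)$ minor $M_{k_0}$ of $B$ obtained by deleting the $k_0$-th column. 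This minor has total degree $(m-q)(d-1) + (q-1)$, since $B$ has $m-q$ rows of entries of degree $d-1$ in $x$ and $q-1$ rows of entries of degree $1$ in $N$. An irreducible component of $\overline{Z}$ projecting densely onto $\overline{C}$ is thus a component of the algebraic set defined by $f_1, \ldots, f_{m-q}$, the forms $N_i(x)$, and $M_{k_0}$; appealing to Fact~\ref{deltainters} (both \eqref{deltaproduct} and \eqref{deltalinear}) yields the estimate \eqref{eqestdegCmilti}.

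The main obstacle is the last counting step: a naive application of Fact~\ref{deltainters} treating each bilinear equation $N_i(x) = 0$ separately would inflate the bound by a factor of $2^{q-1}$. The Euler-identity argument must therefore be exploited carefully --- as was done for the single form $N$ in the proof of Lemma~\ref{lendegreeconetangents} --- so that the contribution of the $q-1$ bilinear conditions $N_i(x) = 0$ is absorbed into the single maximal minor $M_{k_0}$, yielding the claimed bound $d^{m-q}[(m-q)(d-1)+q-1]$ without this extraneous factor.
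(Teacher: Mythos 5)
Your plan tracks the paper's proof quite closely in structure: the paper also proves $\dim\overline C<m(q-1)$ by computing the fiber dimension over $V\setminus\{0\}$ (obtaining $\dim W=m(q-1)$ and then using the $\cc^*$-invariance $T_{\lambda x}(V)\cap V(N)=T_x(V)\cap V(N)$), and for the degree bound it also invokes Corollary~\ref{lempolynomials1} to produce $f_1,\ldots,f_{m-q}$, assembles the $(m-1)\times m$ matrix $B$ from $J(f_1,\ldots,f_{m-q})$ and the coefficient rows of $N$, reformulates the tangency condition as $\rk B< m-1$, invokes the Euler identity after normalizing $x_1\ne 0$, and closes with Fact~\ref{deltainters}. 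One small discrepancy: you reduce to a single $(m-1)\times(m-1)$ minor $M_{k_0}$ after the Euler trick, while the paper speaks of "$q-1$ minors" of that size; your version is the tighter and, I think, the intended one.

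However, the step you label as "the main obstacle" is a genuine gap that you have named but not closed, and your proposed escape route cannot work as stated. The Euler identity gives $J(x)\,x^T=0$ on $V(f_1,\ldots,f_{m-q})$, so it lets you delete the column $k_0$ of $B$ \emph{once you already know} $Bx^T=0$ — that is, once you already impose $N(x)=0$. It does not absorb or replace the $q-1$ bilinear equations $N_i(x)=0$; those remain indispensable defining equations for the incidence variety (without them the Euler relation only shows $x\in\ker J$, not $x\in\ker B$, and the cut-out set acquires spurious higher-dimensional components). A dimension count confirms this: $X$ has codimension $m=(m-q)+(q-1)+1$ in $\cc^m\times\LL(m,q-1)$, and the equations $f_1,\ldots,f_{m-q}$, $N_1(x),\ldots,N_{q-1}(x)$, $M_{k_0}$ are exactly $m$ in number, so none can be dropped. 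The Bezout-type product over this minimal system is
\[
d^{\,m-q}\cdot 2^{\,q-1}\cdot\bigl[(m-q)(d-1)+q-1\bigr],
\]
which still carries the factor $2^{q-1}$ that you correctly identify as extraneous relative to \eqref{eqestdegCmilti}. So your plan as written stops exactly where the nontrivial arithmetic has to happen, and the heuristic "exploit the Euler identity more carefully" does not by itself supply the missing argument; something beyond the naive $\delta\le\prod\deg g_i$ count is needed (for instance, a genuinely projective or inductive-in-$m$ argument that sees the bilinear conditions as degree-$1$ rather than degree-$2$, or a direct computation of the degree of $\overline C$ as a hypersurface via a generic pencil in $\LL(m,q-1)$). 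You should also be aware that the paper's own write-up of this step is terse in exactly the same place — it lists the defining equations and says "using Fact~\ref{deltainters} we obtain the assertion" without displaying the product — so the burden is on you to actually perform and justify the count, not merely gesture at it.
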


\begin{proof}
Let $\II\subset \cc[x_1,\ldots,x_m]$ be the ideal of $V$. For any $f_1,\ldots,f_{m-q}\in\II$ and $N=(N_1,\ldots,N_{q-1})\in\LL(m,q-1)$, where
\begin{equation}\label{eqformN}
N_j(x_1,\ldots,x_m)=a_{j,1} x_1+\cdots+a_{j,m}x_m,
\end{equation}
  $a_{j,k}\in \cc$ for  $1\le j\le q-1$ and $1\le k\le m$, we put
\begin{equation*}
\begin{split}
b_{j,k}&=\frac{\partial f_{j}}{\partial x_k}(x)\quad\hbox{for }1\le j\le m-q,\;\; 1\le k\le m,\\
b_{n-q+j,k}&=a_{j,k}\quad\hbox{for }1\le j\le q-1,\;\; 1\le k\le m.
\end{split}
\end{equation*}
Let
$$
B(x,f_1,\ldots,f_{m-q},N)=[b_{j,k}]_{\begin{smallmatrix}1\le j\le m-1\\
1\le k\le m \end{smallmatrix}}.
$$
Then
\begin{multline*}
C=\{N\in \LL(m,q-1):\exists_{x\in V\setminus\{0\}}\;N(x)=0\\
\land\;\forall_{f_1,\ldots,f_{m-q}\in\II} \land \rk B(x,f_1,\ldots,f_{m-q},N)< m-1\}.
\end{multline*}

%Let
%$$
%\Gamma= {\{(x,y)\in\cc^m\times \cc^m:x\in V\setminus\{0\},\;y\in T_x(V)\}}.
%$$
%Then $\Gamma$ is the tangent bundle of the set $V\setminus\{0\}$. So, it is a smooth and connected analytic set in $[\cc^m\setminus\{0\}]\times \cc^m$ of dimension $2q$. Consequently $\overline{\Gamma}$ is an irreducible algebraic cone of dimension $2q$. Let
%$$
%\Delta=\{(x,y,N)\in\Gamma\times \LL(m,q-1):N(x)=N(y)=0\}.
%$$
%$$%\begin{multline*}
%\Delta_1=\{(x,y,N)\in \Delta: \dim[ T_x(V)\cap V(N)]>1\}
%$$%\end{multline*}
%and let
%$$
%W=\{(x,N)\in \cc^m\times \LL(m,q-1):x\in V\setminus\{0\},\;N(x)=0,\; \dim[ T_x(V)\cap V(N)]>1\}.
%$$
%Then $\Delta$ is a smooth irreducible analytic set  in $[\cc^m\setminus\{0\}]\times \cc^m$ of dimension

Take any $x=(x_1,\ldots,x_m)\in V\setminus\{0\}$. After a linear change of coordinates, one can assume that $T_x(V)=\cc^q\times \{0\}\subset \cc^m$ and $x_1\ne 0$. So, for $N=(N_1,\ldots,N_{q-1})\in\LL(m,q-1)$ of the form \eqref{eqformN} we have $N(x)=0$ and $ \dim[ T_x(V)\cap V(N)]>1$ if and only if $N(x)=0$ and
$$
\det[a_{j,i}]_{\begin{smallmatrix}1\le j\le q-1\\
2\le i\le q
\end{smallmatrix}}=0,
$$
where we have used the Euler formula (by an analogous argument to the proof of Lemma \ref{lendegreeconetangents}). Consequently, the set of $N\in \LL(m,q-1)$ such that  $ \dim[ T_x(V)\cap V(N)]>1$ and $N(x)=0$ is an irreducible algebraic set of dimension $m(q-1)-q$. So, the fibers of the mapping
$$
W\ni (x,N)\mapsto x\in V\setminus\{0\}
 $$
are irreducible sets, where
$$
W=\{(x,N)\in \cc^m\times \LL(m,q-1):x\in V\setminus\{0\},\;N(x)=0,\; \dim[ T_x(V)\cap V(N)]>1\}.
$$
 Since $V\setminus\{0\}$ is a smooth set, we easily deduce that $\pi$ is a locally trivial fibration. Thus $\overline{W}$ is an irreducible algebraic set,  and hence so is $\overline{C}$  (as the projection of $\overline{W}$ onto $\LL(m,q-1)$).

 The inequality $\dim \overline{C}<m(q-1)$ follows immediately from the fact that for any $x\in V\setminus\{0\}$ the set of $N\in \LL(m,q-1)$ such that  $ \dim[ T_x(V)\cap V(N)]>1$ and $N(x)=0$ is an algebraic set of dimension $m(q-1)-q$,
%%??
 because $\dim V=q$ and  $T_{\lambda x}(V)\cap V(N)=T_x(V)\cap V(N)$ for $\lambda\in\cc\setminus\{0\}$.

We will prove \eqref{eqestdegCmilti} by induction on $m$. For $m=1$ the assertion is trivial. Assume that it holds for $m-1$. Without loss of generality, we can assume that  $V\not\subset \{(x_1,\ldots,x_m)\in\cc^m:x_1=0\}$.
By Corollary \ref{lempolynomials1} there are nonzero homogeneous polynomials $f_1,\ldots,f_{m-q}\in\II$  such that $\deg f_j\le d$ for $1\le j\le m-q$, the algebraic cone $V_1=V(f_1,\ldots,f_{m-q})$ has pure dimension $q$ and degree $\le d^{m-q}$, and $V$ is an irreducible component of $V_1$. Moreover, the matrix
$$
J(f_1,\ldots,f_{m-q}):=\left[\frac{\partial f_{j}}{\partial x_k}\right]_{\begin{smallmatrix}1\le j\le m-q\\
1\le k\le m
\end{smallmatrix}}
$$
has rank $m-q$ on a Zariski open and dense subset of $V_1$.
%Let
%$b_{j,k}=\frac{\partial f_{j}}{\partial x_k}(x)$ for $1\le j\le m-q$, $1\le k\le m$ and $b_{n-q+j,k}=a_{j,k}$ for $1\le j\le s$ and $1\le k\le m$, where $a_{j,k}\in \cc$, $N_j(x_1,\ldots,x_m)=a_{j,1} x_1+\cdots+a_{j,m}x_m$, and let
%$B(x,N_1,\ldots,N_s)=[b_{j,k}]_{\begin{smallmatrix}1\le j\le m-q+s\\
%1\le k\le m \end{smallmatrix}}$.
Let
\begin{multline*}
X=\{(x,N)\in \cc^m\times\LL(m,q-1):f_1(x)=\cdots=f_{m-q}(x)=0,\;N(x)=0,\\
\rk B(x,f_1,\ldots,f_{m-q},N)< m-1\}
\end{multline*}
and let $Y\subset \LL(m,1)$ be the closure of the projection of $X$ onto $\LL(m,s)$. Then $\overline{C}$ is an irreducible component of $Y$.

Since $x_1$ does not vanish on $V$, by the Euler formula % $\frac{\partial f_j}{\partial x_1}x_1+\cdots+\frac{\partial f_j}{\partial x_m}x_m=f_j\deg f_j$,
%%?? czy to jest jasne?
the set $X$ can be described by $q-1$ minors of $B$  of size $(m-1)\times(m-1)$ and of degree $(m-q)(d-1)+q-1$ and by the equations $f_1(x)=\cdots=f_{m-q}(x)=0$, $N(x)=0$. So, using Fact \ref{deltainters} we obtain the assertion.
\end{proof}

From Corollary  \ref{cornotin} and Lemma \ref{lendegreeconetangentsmulti} we immediately obtain

\begin{tw}[Bertini]\label{Bertinimultiple}
Let $d,m,q,s$ be positive integers such that $m\ge q$ and $q-1 \ge s$, let
$$
\ell=  d^{m-q}[(m-q)(d-1)+q-1]+m(q-1)-1,
$$
 and let $N_j\in\mathbb{L}(m(q-1),1)$, $1\le j\le \ell$, be a system of independent linear functions. Then
\begin{multline*}
K_{s,j_1,\ldots,j_{m(q-1)-1}}=\{x\in\cc^m:\exists_{a=(a_1,\ldots,a_{q-1})\in (\cc^m)^{q-1}\setminus\{0\}}\;N_{j_1}(a)=\cdots \\=N_{j_{m(q-1)-1}}(a)=0,\; a_1x^T=\cdots=a_{s}x^T=0\}\quad\hbox{for } 1\le j_1<\cdots<j_{m-1}\le \ell
\end{multline*}
is a system of linear subspaces of dimension $m-s$ such that for any irreducible  algebraic cone $V\subset \cc^m$ with  $\dim V = q$ and $\deg V\le d$ such that $V\setminus \{0\}$ is smooth, there are $1\le j_1<\cdots<j_{m(q-1)-1}\le \ell$ such that
%for any
%$$
%(a_{1,1},\ldots,a_{1,m},a_{2,1},\ldots,a_{2,m},\ldots,a_{q-1.1},\ldots,a_{q-1,m})\in V(N_{j_1},\ldots,N_{m(q-1)-1})\setminus\{0\},
%$$
% any $1\le s\le q$ and the linear mapping
%$$
%N_{s,j_1,\ldots,j_{m(q-1)-1}}(x_1,\ldots,x_m)=(a_{j,1}x_1+\cdots+a_{j,m}x_m:1\le j \le s)\in \LL(m,s),
%$$
 the intersection $X=V\cap V(K_{s,j_1,\ldots,j_{m(q-1)-1}})$ is transversal at any point $x\in X\setminus \{0\}$ and the set  $X\setminus\{0\}$ is smooth. If moreover $q \ge s+2$ then  $X$ is irreducible.
\end{tw}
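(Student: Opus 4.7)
The plan is to combine Lemma~\ref{lendegreeconetangentsmulti} with Corollary~\ref{cornotin}, applied in the parameter space $\LL(m,q-1)\cong\cc^{m(q-1)}$ of $(q-1)$-tuples of linear forms on $\cc^m$. Applied to the given cone $V$, Lemma~\ref{lendegreeconetangentsmulti} produces the algebraic cone $\overline{C}\subset\cc^{m(q-1)}$ with $\dim\overline{C}<m(q-1)$ and $\delta(\overline{C})\le D:=d^{m-q}[(m-q)(d-1)+q-1]$. Since $\ell=D+m(q-1)-1$ matches precisely the threshold $d+m'-1$ of Corollary~\ref{cornotin} with $m'=m(q-1)$, the corollary applied to $\overline{C}$ in the ambient $\cc^{m(q-1)}$ furnishes indices $1\le j_1<\cdots<j_{m(q-1)-1}\le\ell$ for which the one-dimensional linear subspace $V(N_{j_1},\ldots,N_{j_{m(q-1)-1}})\subset\cc^{m(q-1)}$ meets $\overline{C}$ only at the origin. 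This line is spanned by a nonzero $a=(a_1,\ldots,a_{q-1})\in(\cc^m)^{q-1}$ obtained as the (up to scalar) unique solution of the conditions $N_{j_k}(a)=0$. Independence of the $N_j$'s ensures that $a_1,\ldots,a_s$ are linearly independent as forms on $\cc^m$, so $K_{s,j_1,\ldots,j_{m(q-1)-1}}=V(a_1,\ldots,a_s)$ is a linear subspace of dimension $m-s$, as asserted.

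For the transversality claim, I would exploit $a\notin\overline{C}\supseteq C$. For every $x\in V\setminus\{0\}$ with $a_1(x)=\cdots=a_{q-1}(x)=0$ the defining condition of $C$ gives $\dim[T_xV\cap V(a_1,\ldots,a_{q-1})]\le 1$; the cone structure of $V$ provides $x\in T_xV$, and the vanishing hypothesis gives $x\in V(a_1,\ldots,a_{q-1})$, so the dimension is at least $1$, hence exactly $1$. Consequently $a_1|_{T_xV},\ldots,a_{q-1}|_{T_xV}$ are linearly independent, and a fortiori the initial segment $a_1|_{T_xV},\ldots,a_s|_{T_xV}$ is linearly independent, which is the transversality of $V\cap K_s$ at $x$. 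For a point $x\in X\setminus\{0\}$ at which some $a_i(x)\ne 0$ for $i>s$, if transversality of $V\cap K_s$ failed at $x$, then $\dim[T_xV\cap V(a_1,\ldots,a_s)]>q-s$; imposing the $q-1-s$ additional linear conditions $a_{s+1},\ldots,a_{q-1}$ reduces the dimension by at most $q-1-s$, forcing $\dim[T_xV\cap V(a_1,\ldots,a_{q-1})]>1$, and using the cone structure of $V$ (invariance under scaling, the tangent cone being cut out by the same linear conditions) one then exhibits a point $x'\in V\setminus\{0\}$ at which all $a_i(x')=0$ hold and the dimension inequality persists, contradicting $a\notin\overline{C}$. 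Smoothness of $X\setminus\{0\}$ is then immediate from transversality together with smoothness of $V\setminus\{0\}$.

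For the irreducibility when $q\ge s+2$, I would cut $V$ by the hyperplanes $V(a_1),\ldots,V(a_s)$ one at a time and invoke the connectedness step of Hartshorne's proof of Theorem~II.8.18 in~\cite{H}: at each stage the smooth section has dimension at least~$2$, so its generic hyperplane section is connected, hence the new transversal section remains irreducible, and induction on $s$ finishes the argument. I expect the main obstacle to be the transversality verification at points $x\in V\cap K_s\setminus\{0\}$ lying outside $V\cap V(a_1,\ldots,a_{q-1})$: the bad set $C$ furnished by Lemma~\ref{lendegreeconetangentsmulti} is defined using the full system of $q-1$ vanishing conditions, so reconciling it with the partial-intersection transversality failure requires the cone-limit argument sketched above (or, equivalently, an enlargement of the bad set whose closure is still contained in $\overline{C}$).
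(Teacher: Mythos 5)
Your approach matches the paper's own terse derivation: apply Lemma~\ref{lendegreeconetangentsmulti} to bound the bad cone $\overline{C}\subset\LL(m,q-1)$ and then Corollary~\ref{cornotin} in $\cc^{m(q-1)}$ to select a line spanned by $a=(a_1,\ldots,a_{q-1})$ avoiding it. But the ``main obstacle'' you flag at the end is a real gap, and the cone-scaling argument you sketch does not close it. The cone $\overline{C}$ records non-transversality only at those $x\in V\setminus\{0\}$ at which \emph{all} of $a_1,\ldots,a_{q-1}$ vanish, whereas $X=V\cap V(a_1,\ldots,a_s)$ contains points where only $a_1,\ldots,a_s$ vanish. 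Scaling $x$ by $\lambda\in\cc\setminus\{0\}$ leaves the set $\{i:a_i(x)=0\}$ unchanged, so it cannot move $x$ to an $x'$ with $a_{s+1}(x')=\cdots=a_{q-1}(x')=0$; and if the non-transversal locus $B_s\subset V$ of $(a_1,\ldots,a_s)$ has dimension $<q-s$, then $B_s\cap V(a_{s+1},\ldots,a_{q-1})$ may reduce to $\{0\}$, so no such $x'$ exists.

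Concretely, take $m=4$, $q=3$, $s=1$, $V=\{xy-zw=0\}\subset\cc^4$, $a_1=x+y-z-w$, $a_2=x$. The form $a_1$ vanishes at $(1,1,1,1)\in V$ and annihilates $T_{(1,1,1,1)}V$, so $V\cap V(a_1)$ is non-transversal (indeed singular: after eliminating $w$ it is $(x-z)(y-z)=0$) there; yet $(a_1,a_2)\notin\overline{C}$, because $V\cap V(a_1,a_2)$ is the pair of lines through $(0,1,0,1)$ and $(0,1,1,0)$, along each of which $a_1,a_2$ restrict to independent forms on the tangent space, and this persists for all nearby $(a_1',a_2')$. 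Thus avoiding $\overline{C}$ does not guarantee transversality of $V\cap V(a_1,\ldots,a_s)$ when $s<q-1$. A correct argument must either enlarge the bad cone to cover the partial-vanishing loci (with a correspondingly larger $\ell$) or iterate hyperplane sections in the spirit of Hartshorne's proof; the paper's ``we immediately obtain'' glosses over exactly this point, and the repair you propose does not work.
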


\begin{rem}\label{rembeterest}
%It is worth noting that
The estimate of the number of linear functions $\ell$ in Theorem \ref{Bertinimultiple} is better than the one obtained by repeated use of the estimate of $\ell$ in Theorem \ref{Bertini}.
\end{rem}

\begin{rem}\label{remprojmultisection}
 It is easy to state the above theorem for sets in projective spaces.
\end{rem}

From Theorem \ref{Bertinimultiple} we immediately obtain (cf. Corollary \ref{Bertinimultiple})

\begin{wn}\label{Bertinimultipleextra}
Under the notations and assumptions of Theorem \ref{Bertinimultiple},
%Let $d,m,q$ be positive integers such that $m\ge q$, let
%$$
%\ell=  d^{m-q}[(m-q)(d-1)+q-1]+m(q-1)-1,
%$$
%and let
%$$
%X_{s,j_1,\ldots,j_{m(q-1)-1}}=V(N_{s,j_1,\ldots,j_{m(q-1)-1}})\quad\hbox{for }1\le j_1<\cdots<j_{m(q-1)-1}\le \ell
%$$
%for $1\le s\le q$.% and let $N_j\in\mathbb{L}(m(q-1),1)$, $1\le j\le \ell$ be a system of independent linear functions.
  for any  algebraic set $V=V(f_1,\ldots,f_r)\subset \cc^m$ of pure dimension $q$, where $f_j\in\cc[x]$, $\deg f_j\le d$ for $1\le j\le r$, we have
$$
\deg _0V=\max_{1\le j_1<\cdots<j_{m(q-1)-1}\le \ell}\deg_0 (V\cap K_{s,j_1,\ldots,j_{m(q-1)-1}})%=\min_{1\le i_1<\cdots<i_{q}\le \ell}i(C_0(V),V(N_{i_1},\ldots,N_{i_q})).
$$
for any $1\le s\le q$.
\end{wn}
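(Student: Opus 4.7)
My plan is to establish the identity by showing two things: for every valid $\vec{j}=(j_1,\ldots,j_{m(q-1)-1})$ the quantity $\deg_0(V\cap K_{s,\vec{j}})$ is bounded above by $\deg_0 V$, and for at least one such $\vec{j}$ this bound is attained. The unifying object is the tangent cone $C := C_0(V)$, which by Fact \ref{factpropertiestangcone} is an algebraic cone of pure dimension $q$ with $\deg_0 V=\deg C$, and whose total degree is bounded by $\delta(C)\le d^{m-q}$ via Corollary \ref{factdeltaofacone} (or directly via Corollary \ref{factdeltaofacone}'s proof applied to a defining system of degree-$d$ polynomials for $V$).

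For the upper bound, I would start from Whitney's definition of the tangent cone to record the elementary inclusion $C_0(V\cap L)\subseteq C\cap L$, valid for any linear subspace $L\subset\cc^m$ through $0$ whose intersection with $C$ is proper of dimension $q-s$. Applying $\deg$ to both sides, combined with \eqref{deltaintersection} and \eqref{deltaproduct} of Fact \ref{deltainters}, gives
\[
\deg_0(V\cap L)\;=\;\deg C_0(V\cap L)\;\le\;\delta(C\cap L)\;\le\;\delta(C)\cdot\delta(L)\;=\;\deg_0 V,
\]
so every $K_{s,\vec{j}}$ that cuts $C$ properly contributes at most $\deg_0 V$ to the maximum; the $\vec{j}$ giving improper intersections are handled separately by excluding them from the sup or by an excess-intersection argument.

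For the equality-achieving $\vec{j}$, the strategy is to exploit the richness of the family $\{K_{s,\vec{j}}\}$, which is parametrized by the points $[a_1:\cdots:a_{q-1}]\in\PP((\cc^m)^{q-1})$ singled out (up to scalar) by the $m(q-1)-1$ independent equations $N_{j_1}(a)=\cdots=N_{j_{m(q-1)-1}}(a)=0$. Since $\delta(C)\le d^{m-q}$, I would invoke Corollary \ref{remproperintersection}(b) applied in the ambient space $\cc^{m(q-1)}$ (rather than $\cc^m$) to the pulled-back cone $\widetilde C=\{a\in(\cc^m)^{q-1}:\{a_1,\ldots,a_s\}^{\perp}\cap C\text{ is not proper of dim }q-s\}$, whose degree is controlled by the numerology $\ell=d^{m-q}[(m-q)(d-1)+q-1]+m(q-1)-1$ chosen in Theorem \ref{Bertinimultiple}. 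The independence of the $N_j$ then forces at least one $\vec{j}$ to escape $\widetilde C$; for such a $\vec{j}$ the intersection $K_{s,\vec{j}}\cap C$ is proper and, moreover, the inclusion $C_0(V\cap K_{s,\vec{j}})\subseteq C\cap K_{s,\vec{j}}$ becomes an equality of cones so that the two inequalities above collapse, yielding $\deg_0(V\cap K_{s,\vec{j}})=\deg_0 V$.

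The main obstacle I anticipate lies in the second step: verifying that the combinatorial family $\{K_{s,\vec{j}}\}$, being finite rather than swept out by generic linear forms, actually contains a subspace in sufficiently general position with respect to a possibly reducible and singular cone $C$. Theorem \ref{Bertinimultiple} itself is not directly applicable since it requires $V\setminus\{0\}$ (or the appropriate cone) to be smooth and irreducible. The cleanest route seems to be to decompose $C=\bigcup C_i$ into irreducible components of dimension $q$, apply the proper-intersection version of the counting argument in Lemma \ref{arange0} / Corollary \ref{lemeverysystem} to each component, and collect the finitely many exceptional $\vec{j}$; the bound on $\ell$ is precisely tailored so that this exceptional set of $\vec{j}$ cannot be the whole index set, guaranteeing the existence of the desired equality-achieving $\vec{j}$.
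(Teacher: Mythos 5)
The overall architecture of your proposal — reduce to the tangent cone $C=C_0(V)$ with $\delta(C)\le d^{m-q}$, prove the upper bound $\deg_0(V\cap K_{s,\vec{j}})\le\deg_0 V$ for every index, and then use a counting argument in the parameter space to exhibit one index achieving equality — is the right skeleton, and the paper itself gives no argument at all (it declares the corollary an immediate consequence of Theorem \ref{Bertinimultiple}), so there is no official proof to compare against. That said, the second step as written contains a genuine gap.

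You claim that once $K_{s,\vec{j}}$ intersects $C$ \emph{properly} (escapes your pulled-back cone $\widetilde C$ of non-proper configurations), the inclusion $C_0(V\cap K_{s,\vec{j}})\subseteq C\cap K_{s,\vec{j}}$ becomes an equality of cones "so that the two inequalities above collapse." But your chain of inequalities
\[
\deg_0(V\cap K)\;=\;\delta\bigl(C_0(V\cap K)\bigr)\;\le\;\delta(C\cap K)\;\le\;\delta(C)\,\delta(K)\;=\;\deg_0 V
\]
has two independent steps, and properness (even together with the set-theoretic equality $C_0(V\cap K)=C\cap K$) only collapses the first. The second step $\delta(C\cap K)\le\delta(C)$ can be strict even for proper intersections: take $C=\{xy=z^2\}\subset\cc^3$ and $K=\{x=0\}$. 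Then $C\cap K=\{x=z=0\}$ is a line, the intersection is proper of dimension $1$, and $C_0(C\cap K)=C\cap K$; yet $\delta(C\cap K)=1<2=\delta(C)$, because the scheme-theoretic intersection is a double line. The correct condition to force $\delta(C\cap K)=\delta(C)$ is transversality of $K$ with $C$ along $C\cap K$ away from $0$, and the degree of \emph{that} bad locus in $\LL(m,q-1)$ is what Lemma \ref{lendegreeconetangentsmulti} controls — this is precisely why $\ell$ in Theorem \ref{Bertinimultiple} is $d^{m-q}[(m-q)(d-1)+q-1]+m(q-1)-1$ rather than the smaller count a pure properness argument (Lemma \ref{arange0}/Corollary \ref{lemeverysystem}) would yield. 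Your $\widetilde C$, being only the non-properness locus, is too small, and the argument that one $\vec{j}$ escapes it does not give you equality.

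A secondary issue: the upper bound $\delta(C_0(V\cap K))\le\delta(C\cap K)$ relies on $C_0(V\cap K)$ being a union of irreducible components of $C\cap K$. This is automatic when $C\cap K$ is pure of dimension $q-s$ (proper intersection), but not in general; merely "excluding improper $\vec{j}$ from the sup" does not suffice because the corollary's maximum ranges over all $\vec{j}$. You would need to show directly that improper $\vec{j}$ cannot make the right-hand side exceed $\deg_0 V$, or argue that $\deg_0$ of a non-pure-dimensional set is by convention the relevant bounded quantity.
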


\section{A formula for the multiplicity of a finite mapping at zero}

Let  $f=(f_1,\ldots,f_m):(\cc^n,0)\to(\cc^m,0)$ be a finite mapping. Then obviously $m\ge n$. By the \emph{multiplicity} of $f$ at $0$ we mean
the improper intersection multiplicity
$$
i(\graph f\cdot(\cc^n\times \{0\});(0,0))
$$
of the $\graph f$ and $\cc^n\times \{0\}$ at  $(0,0)\in
\cc^n\times \cc^m$ (see \cite{ATW}, \cite{Stoll}) and we denote it by $i_0(f)$. Note that $i_0(f)=\infty$ iff $f$ is not finite. If $m=n$ then $i_0(f)=\mu_0(f)$, where $\mu_0(f)$ denotes the covering multiplicity of
$f$ at $0$, or equivalently the codimension of the ideal $(f_1,\ldots,f_n)$ in the ring  $\OO$ of germs of holomorphic functions at $0\in\cc^n$. More precisely,
\begin{equation}\label{eqpropermultipl}
i_0(f)=\mu_0(f)=\dim_\cc\OO/(f_1,\ldots,f_n).
\end{equation}
In the case $m>n$, we have
$$
i_0(f)\ge \dim_\cc\OO/(f_1,\ldots,f_m).
$$

 %The set of  independent systems of functions $N_1,\ldots,N_s\in\mathbb{L}(n,1)$, $s\ge n$,  is a dense subset of $\mathbb{L}(m,s)$ with algebraic complement (see Remark \ref{exa1}).

We will prove the following effective formula for the multiplicity $i_0(f)$ for finite polynomial mappings $f=(f_1,\ldots,f_m):\cc^n\to\cc^m$.
%By the degree of $f$ we mean $\max\{\deg f_1,\ldots,\deg f_m\}$.

\begin{tw}\label{twmultiplicityformula}
Let $\ell=  d^{n}(m-n)+n$.
For any independent system
$$
L_j\in \mathbb{L}(m,1),\quad 1\le j\le \ell,%d^{n-1}(m-n)+n,
$$
and any  polynomial mapping $f\colon\cc^n\to\cc^m$ of degree $d$, finite at $0$,% i.e., $d=\max\{\deg f_1,\ldots,\deg f_m\}$,
$$
i_0(f)=\min_{1\le i_1<\cdots<i_n\le\ell}% d^{n-1}(m-n)+n}
\dim_\cc\OO/(L_{i_1}\circ f,\ldots,L_{i_n}\circ f).
$$
\end{tw}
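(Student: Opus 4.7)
The plan is to reduce the computation of $i_0(f)$ to that of a proper intersection multiplicity $\mu_0(L\circ f)$ for a suitable linear projection $L:\cc^m\to\cc^n$, and then to locate one such good $L$ within the finite family $(L_{i_1},\ldots,L_{i_n})$ by applying Lemma~\ref{arange0}.

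The key reduction is the following well-known fact (see e.g.\ \cite{ATW}, \cite{Stoll}): for every $L\in\LL(m,n)$ such that $L\circ f$ is finite at $0$,
$$
\mu_0(L\circ f)\ge i_0(f),
$$
with equality whenever $\ker L$ meets the tangent cone $C:=C_0(\overline{f(\cc^n)})$ only at the origin. Geometrically, if $X=\overline{f(\cc^n)}$ and $f$ covers $X$ near $0$ with generic local degree $k$, then $i_0(f)=k\cdot\deg_0X$ while $\mu_0(L\circ f)=k\cdot\mu_0(L|_X)$, and $\mu_0(L|_X)=\deg_0X$ precisely when $L|_X$ is finite with fibre over $0$ of minimal multiplicity---equivalently, when $\ker L\cap C=\{0\}$ (as a simple cusp example already illustrates).

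To produce such an $L$ inside the prescribed family, note that $f$ finite at $0$ forces $\dim_0 X=n$, hence $\dim C=n$, and by Fact~\ref{factdegreeoftangcone}, $\delta(C)\le d^n$. Applying Lemma~\ref{arange0} with dimension $q=n$ and with the degree bound $d^n$ in place of $d$ gives exactly $\ell=d^n(m-n)+n$ independent linear forms, matching the hypothesis, and furnishes indices $1\le i_1<\cdots<i_n\le\ell$ with
$$
V(L_{i_1},\ldots,L_{i_n})\cap C=\{0\}.
$$
Thus $\widetilde L:=(L_{i_1},\ldots,L_{i_n})$ satisfies $\ker\widetilde L\cap C=\{0\}$, the composition $\widetilde L\circ f$ is finite at $0$, and the reduction yields
$$
\dim_\cc\OO/(L_{i_1}\circ f,\ldots,L_{i_n}\circ f)=\mu_0(\widetilde L\circ f)=i_0(f).
$$

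Combined, the general inequality bounds every term in the right-hand minimum from below by $i_0(f)$ (with the natural convention that $\dim_\cc\OO/\cdot=\infty$ whenever $L\circ f$ is not finite at $0$), and the particular choice from Lemma~\ref{arange0} realizes equality. The main obstacle is the equality half of the reduction: one must confirm that the bare transversality condition $\ker L\cap C=\{0\}$---rather than some strictly stronger Zariski-open genericity---already forces $\mu_0(L\circ f)=i_0(f)$. This is exactly the geometric content that allows one to extract the minimum from a concrete finite family; once it is granted, the theorem follows by the application of Lemma~\ref{arange0} with the right parameters.
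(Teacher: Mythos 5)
Your argument is correct and follows essentially the same route as the paper: bound $\delta(C_0(\overline{f(\cc^n)}))$ by $d^n$ via Fact~\ref{factdegreeoftangcone}, invoke Lemma~\ref{arange0} with $q=n$ to extract a transversal $n$-tuple from the independent family, and finish with the equality criterion $i_0(f)=i_0(L\circ f) \iff V(L)\cap C_0=\{0\}$, which the paper packages as Proposition~\ref{propSpodz} (derived from \cite{ATW} and \cite{S}) together with the identity \eqref{eqpropermultipl}. The step you flag as the ``main obstacle'' is precisely what Proposition~\ref{propSpodz} supplies, so there is no gap.
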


%The proof of Theorem \ref{twmultiplicityformula} will be preceded by a proposition.
From \cite[Theorem 4.4]{ATW}, analogously to \cite[Theorem 1.1]{S} we get  

\begin{pr}[]\label{propSpodz}
Let $f:(\cc^n,0)\to(\cc^m,0)$ be a finite mapping and let $C_0$ be the tangent cone to the set $f(U)$ for a sufficiently small neighbourhood $U\subset \cc^n$ of the origin. Then for any linear mapping $L\in \mathbb{L}(m,n)$ we have $i_0(f)\le i_0(L\circ f)$, and  equality holds if and only if $V(L)\cap C_0=\{0\}$.
\end{pr}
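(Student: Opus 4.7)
The plan is to combine the cited factorisation from \cite[Theorem 4.4]{ATW} with the standard transversality criterion for tangent cones. Since $f$ is finite at $0$, after shrinking $U$ I may assume that $W=f(U)$ is an analytic set, pure of dimension $n$, and that $f\colon U\to W$ is a proper branched covering of some topological degree $\nu$. By Fact \ref{factpropertiestangcone} together with the definition of the local degree, $\dim C_0=n$ and $\deg C_0=\deg_0 W$.

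The main ingredient I would invoke is the identity
\[
i_0(f) \;=\; \nu\cdot\deg_0 W \;=\; \nu\cdot\deg C_0,
\]
which is essentially \cite[Theorem 4.4]{ATW} (cf.\ \cite[Theorem 1.1]{S}). Next, observe that $L\circ f$ is finite at $0$ iff $0$ is isolated in $f^{-1}(V(L)\cap W)$, which by finiteness of $f$ means $V(L)\cap W=\{0\}$ in a neighbourhood of the origin; by the Whitney construction of the tangent cone this is in turn equivalent to $V(L)\cap C_0=\{0\}$.

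I then split into cases. If $V(L)\cap C_0\ne\{0\}$, then $L\circ f$ is not finite at $0$, so $i_0(L\circ f)=\infty$, which both gives the strict inequality $i_0(f)<i_0(L\circ f)$ and rules out equality. If instead $V(L)\cap C_0=\{0\}$, then $L\circ f\colon(\cc^n,0)\to(\cc^n,0)$ is finite, and by \eqref{eqpropermultipl} the number $i_0(L\circ f)$ equals the topological degree of $L\circ f$ near $0$. This degree factors through the composition of the branched covering $f\colon U\to W$ of degree $\nu$ with the projection $L|_W\colon W\to\cc^n$, giving
\[
i_0(L\circ f) \;=\; \nu\cdot\deg(L|_W);
\]
under the transversality $V(L)\cap C_0=\{0\}$ a generic fibre of $L|_W$ near $0$ is a proper intersection of $W$ with an affine translate of $V(L)$, whose cardinality is by the very definition of the local degree equal to $\deg_0 W=\deg C_0$. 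Hence $\deg(L|_W)=\deg C_0$ and $i_0(L\circ f)=\nu\cdot\deg C_0=i_0(f)$, establishing equality.

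The main obstacle will be the careful invocation of \cite[Theorem 4.4]{ATW}: writing the improper intersection multiplicity $i(\graph f\cdot(\cc^n\times\{0\});(0,0))$ as the product of the covering degree $\nu$ and the local degree $\deg_0 W$ of the image. Once that factorisation is available, the remainder reduces to a routine transversality check with the tangent cone and to the fact that the local degree of an analytic set coincides with the degree of its tangent cone.
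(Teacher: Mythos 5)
Your plan of reducing everything to the covering-degree factorisation $i_0(f)=\nu\cdot\deg_0 W$ is a reasonable skeleton, but the case analysis rests on a false equivalence. You claim that ``$V(L)\cap W=\{0\}$ in a neighbourhood of the origin'' is equivalent to ``$V(L)\cap C_0=\{0\}$.'' Only the implication from the tangent-cone condition to the set condition holds. The converse fails: take $f\colon\cc\to\cc^2$, $f(t)=(t^2,t^3)$, so $W=\{y^2=x^3\}$, the Whitney tangent cone is $C_0=\{y=0\}$, and $L(x,y)=y$. Then $V(L)\cap W=\{0\}$ locally, so $L\circ f(t)=t^3$ is finite at $0$, yet $V(L)\cap C_0=\{y=0\}\neq\{0\}$. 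In this regime $i_0(L\circ f)=3>2=i_0(f)$, so equality indeed fails, but your proof never reaches this conclusion: your first case wrongly asserts $i_0(L\circ f)=\infty$, and your second case never applies.

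This is not a cosmetic slip; the omitted case is precisely the hard content of the cited results \cite[Theorem 4.4]{ATW} and \cite[Theorem 1.1]{S}. When $L\circ f$ is finite but $V(L)$ meets the tangent cone away from $0$, one must show that the local degree of $L|_W$ at $0$ is \emph{strictly} larger than $\deg_0 W$ (equivalently, that the improper intersection multiplicity $i_0(W\cdot V(L))$ strictly exceeds $\deg_0 W$). Your argument silently treats $\deg(L|_W)=\deg_0 W$ as automatic once $L\circ f$ is finite, which is false. A minor but related point: the aside $\deg C_0=\deg_0 W$ is also incorrect, since the set-theoretic Whitney tangent cone drops multiplicities (again the cusp gives $\deg C_0=1$ but $\deg_0 W=2$); fortunately you only really use $\deg_0 W$ in the key steps, so this does not propagate, but it should not appear. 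To repair the proof you need the sharper statement: for any $L$ making $L\circ f$ finite, the local degree of $L|_W$ equals the improper intersection number $i_0(W\cdot V(L))$, and $i_0(W\cdot V(L))\ge\deg_0 W$ with equality if and only if $V(L)\cap C_0=\{0\}$. That inequality, with its equality criterion in terms of the tangent cone, is exactly what \cite[Theorem 4.4]{ATW} supplies and what the paper is invoking; without it, the ``only if'' direction of the Proposition has no support.
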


\begin{proof}[Proof of Theorem \ref{twmultiplicityformula}]
Since $f$ is a polynomial mapping finite at $0$, we have $\dim \overline{f(\cc^n)}=n$ and by Fact \ref{factpropertiestangcone}, $\dim \big(C_0\big(\overline{f(\cc^n)}\big)\big)=n$. From the definition of $d$, Fact \ref{factdegreeoftangcone} gives $\delta\big(C_0\big(\overline{f(\cc^n)}\big)\big)\le d^{n}$. So, by Lemma \ref{arange0} there are $1\le i_1,\ldots,i_n\le \ell$ such that $V(L_{i_1},\ldots,L_{i_n})\cap C_0\big(\overline{f(\cc^n)}\big)=\{0\}$. Combining \eqref{eqpropermultipl} and Proposition~\ref{propSpodz} gives the assertion.
\end{proof}

By an analogous argument to the proof of Theorem \ref{twmultiplicityformula} we obtain

\begin{wn}\label{coreffectivemult}
Let $\ell=  d(m-n)+n$.
For any independent system
$$
L_j\in \mathbb{L}(m,1),\quad 1\le j\le \ell,%d^{n-1}(m-n)+n,
$$
and any  polynomial mapping $f\colon\cc^n\to\cc^m$  with $\deg_0\overline{f(\cc^n)}=d$, finite at $0$,% i.e., $d=\max\{\deg f_1,\ldots,\deg f_m\}$,
$$
i_0(f)=\min_{1\le i_1<\cdots<i_n\le\ell}% d^{n-1}(m-n)+n}
\dim_\cc\OO/(L_{i_1}\circ f,\ldots,L_{i_n}\circ f).
$$
\end{wn}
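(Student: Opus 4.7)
\medskip

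The plan is to follow the proof of Theorem \ref{twmultiplicityformula} line by line, but replace the estimate $\delta(C_0(\overline{f(\cc^n)}))\le d^n$ coming from Fact \ref{factdegreeoftangcone} with the sharper estimate $\delta(C_0(\overline{f(\cc^n)}))\le d$, where now $d=\deg_0\overline{f(\cc^n)}$. Once this sharper estimate is in hand, the argument is essentially identical, only the value of $\ell$ in Lemma \ref{arange0} changes from $d^n(m-n)+n$ to $d(m-n)+n$.

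First I would observe that, since $f:\cc^n\to\cc^m$ is finite at $0$, the image $\overline{f(\cc^n)}$ has dimension $n$, so by Fact \ref{factpropertiestangcone} the tangent cone $C=C_0(\overline{f(\cc^n)})$ is an algebraic cone of dimension exactly $n$ in $\cc^m$. Next I would establish that $\delta(C)\le\deg_0\overline{f(\cc^n)}=d$. This is a classical property of tangent cones: the local degree $\deg_0 V$ of a pure dimensional algebraic set equals the degree of its tangent cone counted with scheme-theoretic multiplicity, and hence it dominates the total degree $\delta(C_0(V))$ of the reduced tangent cone. Alternatively, one can argue directly: by the definition of $\deg_0$ as the proper intersection multiplicity $i_0(\overline{f(\cc^n)},X)$ with a generic linear space $X$ of complementary dimension through $0$, and by the standard identification of this number with the number of branches (with multiplicity) of $C$ cut out by a generic such $X$, the same inequality as in Fact \ref{factdegreeoftangcone} is obtained in a sharper form.

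With the bound $\delta(C)\le d$ and $\dim C=n$, Lemma \ref{arange0} applied with parameters $m$, $q=n$, $d$ and $\ell=d(m-n)+n$ produces indices $1\le i_1<\cdots<i_n\le\ell$ such that
$$
V(L_{i_1},\ldots,L_{i_n})\cap C_0\big(\overline{f(\cc^n)}\big)=\{0\}.
$$
Then Proposition \ref{propSpodz} gives $i_0(f)=i_0((L_{i_1},\ldots,L_{i_n})\circ f)$ for this particular choice, while for every other choice Proposition \ref{propSpodz} still yields $i_0(f)\le i_0((L_{j_1},\ldots,L_{j_n})\circ f)$. Since $(L_{i_1},\ldots,L_{i_n})\circ f:(\cc^n,0)\to(\cc^n,0)$ is finite at $0$, equation \eqref{eqpropermultipl} converts each of these intersection multiplicities into $\dim_\cc\OO/(L_{j_1}\circ f,\ldots,L_{j_n}\circ f)$, and taking the minimum over all admissible tuples yields the desired formula.

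The only non-routine step is the sharpened tangent-cone degree estimate $\delta(C_0(V))\le\deg_0 V$; everything else is a verbatim transcription of the proof of Theorem \ref{twmultiplicityformula}. I would expect this step to go through either by direct citation of Draper's intersection-theoretic results (as already used in Fact \ref{factdegreeoftangcone}) or by a short argument using the definition of $\deg_0$ via proper intersection with a generic linear space, together with the fact that such an intersection specializes (under the $\cc^*$-action defining the cone) to the intersection with the tangent cone.
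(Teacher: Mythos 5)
Your proof is correct and is exactly the ``analogous argument'' the paper alludes to: you replace the crude global bound $\delta\bigl(C_0\bigl(\overline{f(\cc^n)}\bigr)\bigr)\le d^n$ from the second part of Fact~\ref{factdegreeoftangcone} with the sharper local bound $\delta\bigl(C_0\bigl(\overline{f(\cc^n)}\bigr)\bigr)\le \deg_0\overline{f(\cc^n)}=d$ (which indeed follows from Draper's Theorem~6.4, since the Hilbert--Samuel multiplicity $\deg_0 V$ equals the degree of the tangent-cone scheme and hence dominates the total degree of the reduced cone), and then run Lemma~\ref{arange0} with $\ell=d(m-n)+n$ followed by Proposition~\ref{propSpodz} and \eqref{eqpropermultipl} verbatim. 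This matches the intended proof.
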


From Bezout's theorem and Theorem \ref{twmultiplicityformula} we obtain 

\begin{wn}\label{cordimension}
Let   $f\colon\cc^n\to\cc^m$ be a polynomial mapping of degree $d$ such that $f(0)=0$.
Let $m\ge n> q\ge 0$, $A=d^n(m-n)+n$, $B=d^n(n-q)+q$, and let
\begin{align}
&L^q_i\in \mathbb{L}(m+q,1),\quad 1\le i\le A,\nonumber\\ %\label{eqL}\\
&M^q_j\in\mathbb{L}(n,1),\quad 1\le j\le B,\nonumber% \label{eqN}
\end{align}
be systems of independent functions. Then
$$
\min_{\begin{smallmatrix}1\le i_1<\cdots<i_n\le A\\
1\le j_1<\cdots<j_q\le B
\end{smallmatrix}}\dim_\cc\OO/(L_{i_1}\circ (f,M_{j_1},\ldots,M_{j_q}),\ldots,L_{i_n}\circ (f,M_{j_1},\ldots,M_{j_q}))>d^n
$$
if and only if $\dim_0 V(f)\ge q+1$,
\end{wn}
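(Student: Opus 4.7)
The plan is to reduce the statement to Theorem \ref{twmultiplicityformula} applied to the auxiliary mapping $g_J := (f, M_{j_1}, \ldots, M_{j_q}) \colon \cc^n \to \cc^{m+q}$, which has degree $d$, combined with B\'ezout's bound on the multiplicity. Write $J = (j_1, \ldots, j_q)$ and $I = (i_1, \ldots, i_n)$, so the expression in the statement is $\dim_\cc \OO/(L_{i_1}\circ g_J, \ldots, L_{i_n}\circ g_J)$.

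The necessary direction is elementary. If $\dim_0 V(f) \ge q+1$, then for every $J$ the fiber $g_J^{-1}(0) = V(f) \cap V(M_{j_1}, \ldots, M_{j_q})$ has local dimension at least $1$ at $0$, because intersecting with $q$ hyperplanes decreases the local dimension by at most $q$. Consequently $g_J$ is not finite at $0$, and for any $I$ the zero set $V(L_{i_1} \circ g_J, \ldots, L_{i_n} \circ g_J)$ contains the positive-dimensional germ $g_J^{-1}(0)$, forcing $\dim_\cc \OO/(L_{i_1} \circ g_J, \ldots, L_{i_n} \circ g_J) = \infty > d^n$.

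For the sufficient direction assume $\dim_0 V(f) \le q$. First I would choose $J$ so that $g_J$ is finite at $0$. By Fact \ref{factpropertiestangcone} the cone $C_0(V(f)) \subset \cc^n$ has dimension at most $q$, and Lemma \ref{Fact3} applied to $f$ produces a complete intersection in $\cc^n$ of $n-\dim V(f)$ generic linear combinations of $f_1,\ldots,f_m$, of total degree at most $d^{n-\dim V(f)} \le d^n$, that contains the top-dimensional components of $V(f)$; combined with Fact \ref{factdegreeoftangcone} and an iteration on dimension this gives $\delta(C_0(V(f))) \le d^n$. Applying Lemma \ref{arange0} on $\cc^n$ with parameters ``$d$''$=d^n$ and ``$q$''$=q$ then guarantees that the $B = d^n(n-q)+q$ independent forms $M^q_j$ contain indices $1\le j_1<\cdots<j_q\le B$ with $V(M_{j_1},\ldots,M_{j_q}) \cap C_0(V(f)) = \{0\}$, so that $g_J^{-1}(0)=\{0\}$ locally and $g_J$ is finite at $0$.

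The second step is to extract from the system $L^q_1,\ldots,L^q_A$ indices $I$ yielding $\dim_\cc \OO/(L_{i_1}\circ g_J,\ldots,L_{i_n}\circ g_J)\le d^n$. Since $g_J$ is finite at $0$ with $\deg g_J = d$, every $L$ such that $L \circ g_J \colon \cc^n\to\cc^n$ is finite at $0$ yields a local ring whose dimension is bounded, via B\'ezout's theorem, by $\prod_{k=1}^n \deg(L_{i_k}\circ g_J) \le d^n$. The main technical point is that the system of $A = d^n(m-n)+n$ independent forms on $\cc^{m+q}$ — smaller than the $d^n((m+q)-n)+n$ that a naive application of Lemma \ref{arange0} to $C_0(\overline{g_J(\cc^n)}) \subset \cc^{m+q}$ would demand — still suffices because the last $q$ coordinates of $g_J$ are linear. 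The projection $\pi_1\colon\cc^{m+q}\to\cc^m$ satisfies $\pi_1\circ g_J = f$, and $\pi_1(C_0(\overline{g_J(\cc^n)}))$ is contained in $C_0(\overline{f(\cc^n)})$, whose total degree is at most $d^n$ by Fact \ref{factdegreeoftangcone} together with Lemma \ref{Fact3}; applying Lemma \ref{arange0} on $\cc^m$ to this projected cone with parameters ``$d$''$=d^n$ and ``$q$''$=n$ produces the required $I$ within exactly $A = d^n(m-n)+n$ independent forms. Verifying this reduction from target dimension $m+q$ to effective target $m$, which exploits the specific linear-in-$x$ form of the last $q$ coordinates of $g_J$, is the main obstacle.
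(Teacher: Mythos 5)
Your treatment of the necessary direction is correct: intersecting $V(f)$ with $q$ hyperplanes drops local dimension by at most $q$, so if $\dim_0 V(f)\ge q+1$ then every $g_J=(f,M_{j_1},\ldots,M_{j_q})$ has a positive-dimensional fiber over $0$, and every $L_I\circ g_J$ inherits that fiber, giving $\dim_\cc\OO/(L_I\circ g_J)=\infty$. The sufficient direction, however, has gaps in both steps, and the second one is not closed by the fix you sketch.

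Step 1 (choosing $J$): you assert $\delta(C_0(V(f)))\le d^n$, citing Lemma~\ref{Fact3}, Fact~\ref{factdegreeoftangcone}, and an ``iteration on dimension.'' Lemma~\ref{Fact3} only controls the total degree of the top-dimensional part of $V(f)$ (by $d^{\,n-\dim V(f)}$); it says nothing about lower-dimensional components, and $V(f)\subset V(L\circ f)$ does not bound $\delta(V(f))$ by $\delta(V(L\circ f))$, since a component of $V(f)$ may sit inside a higher-dimensional component of $V(L\circ f)$ with larger degree. Fact~\ref{factdegreeoftangcone} then only gives $\delta(C_0(V))\le\delta(V)$, and the bound $\delta(V(f_1,\ldots,f_m))\le d^n$ in $\cc^n$ for $m>n$ is a refined B\'ezout inequality that the paper's stated facts do not provide. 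So this step needs a genuine argument, not an appeal to iteration.

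Step 2 (choosing $I$): you correctly identify the discrepancy — applying Theorem~\ref{twmultiplicityformula} to $g_J\colon\cc^n\to\cc^{m+q}$, which has degree $d$, requires $d^n\bigl((m+q)-n\bigr)+n$ independent forms on $\cc^{m+q}$, whereas only $A=d^n(m-n)+n$ are available — but the projection trick does not bridge it. The inclusion $\pi_1\bigl(C_0(\overline{g_J(\cc^n)})\bigr)\subset C_0(\overline{f(\cc^n)})$ is fine, yet finding $n$ forms $\tilde L_1,\ldots,\tilde L_n$ on $\cc^m$ with $V(\tilde L_1,\ldots,\tilde L_n)\cap C_0(\overline{f(\cc^n)})=\{0\}$ does \emph{not} yield forms on $\cc^{m+q}$ isolating $0$ in $C_0(\overline{g_J(\cc^n)})$: that cone generally contains nonzero vectors of the form $(0,w)\in\cc^m\times\cc^q$, which are invisible to anything pulled back through $\pi_1$. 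Concretely, if $v\in\cc^n\setminus\{0\}$ lies in the kernel of the differential of $f$ at $0$ (for instance whenever $\ord_0 f\ge 2$), then rescaling $g_J(tv)=(f(tv),tM(v))$ by $1/t$ as $t\to 0$ produces the limit $(0,M(v))\in C_0(\overline{g_J(\cc^n)})\setminus\{0\}$. Thus the linearity of the last $q$ coordinates of $g_J$ does not, by itself, reduce the ambient dimension in Lemma~\ref{arange0} from $m+q$ to $m$, and the ``main obstacle'' you flag is left unresolved in your proposal. As written, the Bezout-plus-Theorem~\ref{twmultiplicityformula} route applies cleanly only with $d^n(m+q-n)+n$ forms on $\cc^{m+q}$, so a substantively different argument is needed to justify the smaller $A$ in the statement.
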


\begin{rem}\label{remeffectivemult}
The above corollary gives an algorithm for computing $\dim_0 V(f)$ and deciding whether the mapping $f$ is finite.
\end{rem}

\section{Łojasiewicz exponent}\label{Lojexpsect}

%The local {\L}ojasiewicz exponent is an important tool in  singularity theory (for more detailed references see for instance \cite{RS}). This exponent is closely related to the order function  \cite{LT}, the polar quotients \cite{Melle}, \cite{Teissier}, the degree of $C^0$-sufficiency of jets determined by holomorphic functions \cite{Kui}, \cite{Kuo2}, \cite{BL}, \cite{ChangLu}, reduction of ideals \cite{Teissier}, optimization \cite{Schweighofer1}, etc. Therefore, finding formulas for and estimates of  $\cal{L}_0(f)$ is of interest. This problem was considered by many authors (see for instance \cite{BR}, \cite{CK}, \cite{Ko1},  \cite{Ko2}, \cite{Kuo3}--%,  \cite{LT},
%\cite{Lichtin}, \cite{P0}--\cite{P2}, \cite{Teissier}).

The \emph{local Łojasiewicz exponent} of a mapping $f\colon(\cc^n,0)\to(\cc^m,0)$, denoted by $\cal{L}_0(f)$, is defined to be the infimum of the set of all exponents $\nu\ge 1$ such that
\begin{equation*}
  |f(z)|\ge C|z|^\nu
\end{equation*}
for some constant $C>0$ in a neighbourhood of the origin in $\cc^n$  (see \cite{Teissier}).
In \cite{LT} it was proved that $\cal{L}_0(f)$ is a rational number and the infimum in the definition
is in fact a minimum, provided $f$ is finite at $0$.

%\section{Local Łojasiewicz exponent for overdetermined mappings}%\label{formula}

A. P{\l}oski \cite{P2} gave a formula for $\cal{L}_0(f)$ for a finite mapping $f$ in terms of the characteristic polynomial, provided $n=m$. In fact, the characteristic polynomial is a polynomial with holomorphic coefficients. In the case $n = m = 2$ such a formula was obtained by J. Chądzyński and T. Krasiński \cite{CK} in terms of resultants. In Proposition \ref{p0} below, we quote a version of  Płoski's  formula obtained by T.~Rodak and S.~Spodzieja \cite[Proposition 3]{RS}.
%This version not only refers to the characteristic polynomials.
%{\bf Zacytować Chądzyńskiego i Krasińskiego.}

Let $P(y,t)=\sum_{j=0}^\infty P_j(y)t^j\in \cc\{y,t\}$ be a  power
series regular of order $r+1$ in~$t$. Write
$$%\begin{equation}
  \Delta(P)=\min_{j=0}^r\frac{\ord P_j}{r+1-j}.
  %\label{e101}
$$%\end{equation}
%Let $P(y,t) \in \mathbb{C}\{y,t\}$ be a power series of the form%
%$$
% P(y,t)=\sum_{j=0}^{\infty }P_{j}(y)t^j,
%$$
%regular of order $r+1$ in $t$. Write
%$$
%\Delta (P)= \min _{j=0}^r \frac {\ord P_j}{r+1-j}.
%$$
Let $f\colon(\cc^n,0)\to (\cc^n,0)$ be finite at $0$. Then for every $k=1,\ldots,n$ there exists a power series $P_k(y,t) \in \mathbb{C}\{y,t\}$ such that for some arbitrarily small neighbourhoods $U_0$ and $W_0$ of $0 \in \mathbb{C}^n$ and $0 \in \mathbb{C}^{n+1}$ respectively, we have
\begin{equation}\label{maintrouble}
\{(y,t) \in W_0 : P_k(y,t)=0 \}=\{(f(z),z_k)\in\mathbb{C}^n\times\mathbb{C}:z\in U_0\}.
\end{equation}
%is equal to the image of $U_0$ under the mapping $(z_1,\ldots,z_n)=z \mapsto (f(z),z_k)$.

\begin{pr}\label{p0}
Under the above assumptions and notation,
\begin{equation}\label{Ploskiformula}\tag{P}
{\cal{L}}_{0}(f)= \max _{k=1}^n \frac{1}{\Delta (P_k)}.
\end{equation}
\end{pr}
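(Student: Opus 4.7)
The plan is to use the classical arc characterization of the {\L}ojasiewicz exponent for finite mappings: since $f$ is finite at $0$, one has
$$
{\cal L}_0(f)=\sup_\varphi \frac{\ord(f\circ\varphi)}{\ord\varphi},
$$
the supremum running over non-constant analytic arcs $\varphi\colon(\cc,0)\to(\cc^n,0)$ with $\ord\varphi:=\min_k\ord\varphi_k$ (this follows from the curve selection lemma together with the rationality statement in \cite{LT}). The task then reduces to identifying this supremum with $\max_k 1/\Delta(P_k)$, which splits into a universal upper bound valid for every arc and the exhibition of a single arc that saturates it.

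For the upper bound, fix any such $\varphi$. By \eqref{maintrouble}, $P_k(f(z),z_k)\equiv 0$ as a germ at $0\in\cc^n$, so substituting $z=\varphi(s)$ yields
$$
\sum_{j\ge 0} P_{k,j}(f(\varphi(s)))\,\varphi_k(s)^j=0.
$$
Write $\alpha:=\ord(f\circ\varphi)$ and $\beta_k:=\ord\varphi_k$. The regularity of $P_k$ in $t$ of order $r_k+1$ gives $P_{k,r_k+1}(0)\ne 0$ while $P_{k,j}(0)=0$ for $j\le r_k$; hence the $j=r_k+1$ summand contributes order exactly $(r_k+1)\beta_k$, whereas every lower summand contributes order at least $\alpha\cdot\ord P_{k,j}+j\beta_k$. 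Cancellation forces some $j\le r_k$ with $\alpha\,\ord P_{k,j}+j\beta_k\le (r_k+1)\beta_k$, giving $\beta_k/\alpha\ge \ord P_{k,j}/(r_k+1-j)\ge \Delta(P_k)$. Taking the minimum over $k$ yields $\ord\varphi\ge \alpha\cdot \min_k\Delta(P_k)$, hence $\ord(f\circ\varphi)/\ord\varphi\le \max_k 1/\Delta(P_k)$.

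For the matching lower bound, let $k^*$ realize $\max_k 1/\Delta(P_k)$ and let $j^*\le r_{k^*}$ realize $\Delta(P_{k^*})=\ord P_{k^*,j^*}/(r_{k^*}+1-j^*)$. After Weierstrass preparation we treat $P_{k^*}(y,t)$ as a distinguished polynomial of degree $r_{k^*}+1$ in $t$ over $\cc\{y\}$; choosing a generic analytic arc $\gamma\colon(\cc,0)\to(\cc^n,0)$ along which $\ord P_{k^*,j}(\gamma)=\ord P_{k^*,j}\cdot\ord\gamma$ for all $j\le r_{k^*}$, the Newton polygon of $P_{k^*}(\gamma(s),t)\in\cc\{s\}[t]$ has an edge of slope $\Delta(P_{k^*})\cdot\ord\gamma$ opposite the vertex $(r_{k^*}+1,0)$, producing by Newton--Puiseux a root $t=\tau(s)$ with $\ord\tau=\Delta(P_{k^*})\ord\gamma$. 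The pair $(\gamma(s),\tau(s))$ lies in $\{P_{k^*}=0\}$, which by \eqref{maintrouble} equals the image of the finite (branched) map $z\mapsto(f(z),z_{k^*})$, so a preimage arc $\varphi$ with $f\circ\varphi=\gamma$ and $\varphi_{k^*}=\tau$ exists. Applying the already-proved upper bound to this $\varphi$ for each $k$ gives $\ord\varphi_k\ge \Delta(P_k)\ord\gamma\ge \Delta(P_{k^*})\ord\gamma=\ord\varphi_{k^*}$, hence $\ord\varphi=\ord\varphi_{k^*}$ and
$$
\frac{\ord(f\circ\varphi)}{\ord\varphi}=\frac{\ord\gamma}{\Delta(P_{k^*})\ord\gamma}=\frac{1}{\Delta(P_{k^*})},
$$
attaining the supremum.

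The main technical obstacle is the lifting step in the third paragraph: a Puiseux branch of $\{P_{k^*}=0\}\subset\cc^{n+1}$ must be pulled back through the finite cover $z\mapsto(f(z),z_{k^*})$ to an honest arc in $\cc^n$, and one must verify that the auxiliary $\gamma$ is generic enough so that the Newton polygon of $P_{k^*}(\gamma(s),t)$ actually inherits the slope $\Delta(P_{k^*})\ord\gamma$ predicted by the individual coefficient orders. Finiteness of $f$ ensures the cover has only finitely many sheets and makes the lift available after passing to the normalization of the selected branch, while the self-correcting inequality $\ord\varphi_k\ge\Delta(P_k)\ord\gamma$ forces the minimum $\ord\varphi$ to fall on the coordinate $k^*$ for which $\Delta(P_k)$ is smallest, closing the argument.
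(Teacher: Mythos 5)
The paper gives no proof of Proposition~\ref{p0}: it is quoted verbatim from Rodak--Spodzieja \cite[Proposition 3]{RS}, itself a reformulation of P{\l}oski's formula from \cite{P2}. So there is no internal argument here to compare against, and your proposal has to stand on its own. On its own it is essentially correct, and it is the standard arc/Newton--polygon proof of such characteristic-polynomial formulas. The upper half is clean: substituting an arc into the identity $P_k(f(z),z_k)\equiv 0$ supplied by \eqref{maintrouble}, observing that the $j=r_k+1$ term has order exactly $(r_k+1)\ord\varphi_k$ while every $j>r_k+1$ term is strictly larger, and concluding that some $j\le r_k$ must match it is exactly the cancellation inequality one needs; taking the minimum over $k$ then gives $\ord(f\circ\varphi)/\ord\varphi\le\max_k 1/\Delta(P_k)$. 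The lower half also has the right shape: for a generic linear arc $\gamma$ the edge of the Newton polygon of $P_{k^*}(\gamma(s),t)$ terminating at $(r_{k^*}+1,0)$ has slope $-\Delta(P_{k^*})\ord\gamma$, producing a Puiseux root $\tau$ of that order, and applying your own upper bound to every coordinate of the lift forces $\ord\varphi=\ord\varphi_{k^*}$, so the constructed arc saturates the supremum.

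The one place to tighten is the lift. As written you assert an arc $\varphi$ with $f\circ\varphi=\gamma$ and $\varphi_{k^*}=\tau$, but pulling the Puiseux branch $(\gamma(s),\tau(s))$ back through the finite map $z\mapsto(f(z),z_{k^*})$ generally yields $\varphi$ only after the reparametrization $s=u^e$ for some integer $e\ge 1$, i.e.\ $f\circ\varphi(u)=\gamma(u^e)$ and $\varphi_{k^*}(u)=\tau(u^e)$. This costs nothing, since $\ord(f\circ\varphi)/\ord\varphi$ is invariant under $s\mapsto s^e$, but as stated the equalities $\ord(f\circ\varphi)=\ord\gamma$ and $\ord\varphi_{k^*}=\Delta(P_{k^*})\ord\gamma$ are not literally true, and the subsequent comparison $\ord\varphi_k\ge\Delta(P_k)\ord\gamma$ should be $\ord\varphi_k\ge\Delta(P_k)\,\ord(f\circ\varphi)$ with the extra factor $e$ carried along. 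You correctly identify the lift as the main technical point and then dispatch it via normalization without tracking this scale factor; making that one sentence explicit would close the argument cleanly.
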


The purpose of this section is to provide an effective version of the above proposition for polynomial mappings $f:(\cc^n,0)\to(\cc^m,0)$, $m\ge n$, finite at $0$ (see Theorem \ref{maintheorem1} below). Let us start with some supporting facts.

From  \cite[Theorem 2.1 and its proof]{S}  we have  the following proposition.

\begin{pr}[]\label{propSpodz2}
Let $f:(\cc^n,0)\to(\cc^m,0)$ be a finite mapping and let $C_0$ be the tangent cone to the set $f(U)$ for a sufficiently small neighbourhood $U\subset \cc^n$ of the origin. Then for any linear mapping $L\in \mathbb{L}(m,n)$ we have ${\cal{L}}_0(f)\le {\cal{L}}_0(L\circ f)$, and  equality holds if and only if $V(L)\cap C_0=\{0\}$.
\end{pr}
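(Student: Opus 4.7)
The plan is to first establish the easy inequality and then characterize equality by studying how $L$ behaves on the image $f(U)$ via its tangent cone $C_0$. The inequality ${\cal L}_0(f) \le {\cal L}_0(L\circ f)$ is immediate from $|L\circ f(z)| \le \|L\|\,|f(z)|$: any estimate $|L\circ f(z)| \ge c|z|^\nu$ near $0$ forces $|f(z)| \ge (c/\|L\|)|z|^\nu$ there.

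Assume $V(L) \cap C_0 = \{0\}$. I would show that there exist $c>0$ and a neighbourhood $U$ of $0$ such that $|L(w)| \ge c|w|$ for all $w \in f(U)$. This proceeds in two steps. First, continuity of $|L|$ on the compact set $C_0 \cap S^{2m-1}$, combined with $V(L) \cap C_0 = \{0\}$, yields $|L(v)| \ge c|v|$ for every $v \in C_0$ by homogeneity of $C_0$. Second, if the bound failed on $f(U)$ for every $c>0$, there would be a sequence $w_\nu = f(z_\nu) \to 0$ with $|L(w_\nu)|/|w_\nu| \to 0$; extracting a convergent subsequence of $w_\nu/|w_\nu|$ would produce an accumulation point $w_\infty \in C_0 \setminus \{0\}$ (directly from Whitney's definition of the tangent cone) with $L(w_\infty) = 0$, a contradiction. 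The resulting bound $|L(f(z))| \ge c|f(z)|$ turns any {\L}ojasiewicz estimate for $f$ into one for $L\circ f$ with the same exponent, giving ${\cal L}_0(L\circ f) \le {\cal L}_0(f)$ and hence equality.

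Conversely, suppose $V(L) \cap C_0 \ne \{0\}$; pick $w_0 \in V(L) \cap C_0 \setminus \{0\}$. Whitney's definition yields sequences $z_\nu \to 0$ and $\alpha_\nu \in \cc$ with $\alpha_\nu f(z_\nu) \to w_0$, so that $|\alpha_\nu|\to\infty$ and $|L(f(z_\nu))|/|f(z_\nu)| \to 0$. To convert this pointwise failure into a strict inequality of {\L}ojasiewicz exponents, the cleanest route is to establish the multiplicative identity
\[
{\cal L}_0(L\circ f) = {\cal L}_0(f)\cdot \mu_L,
\]
where $\mu_L$ is the {\L}ojasiewicz exponent of $L|_{f(U)}$ at $0$, i.e.\ the infimum of $\mu \ge 1$ for which $|L(w)| \ge c|w|^\mu$ on $f(U)$ near $0$. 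The compactness analysis of the previous paragraph, together with the present sequence, shows $\mu_L = 1$ iff $V(L) \cap C_0 = \{0\}$ and $\mu_L > 1$ otherwise; in the present case the formula therefore delivers the strict inequality ${\cal L}_0(L\circ f) > {\cal L}_0(f)$.

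The main obstacle is the $\ge$ half of the multiplicative formula. The $\le$ half is straightforward by chaining two {\L}ojasiewicz bounds $|f(z)| \ge C_1 |z|^{{\cal L}_0(f)+\varepsilon}$ and $|L(w)| \ge C_2|w|^{\mu_L+\varepsilon}$ and letting $\varepsilon\to 0$. The $\ge$ half uses crucially that, for $f$ finite at $0$, the exponent ${\cal L}_0(f)$ is attained on an analytic curve (a result of {\L}ojasiewicz-Teissier), and that a curve in $f(U)$ realising $\mu_L$ lifts to a Puiseux curve in $\cc^n$ via the properness of $f$; comparing orders of $f\circ\gamma$, $L\circ f\circ\gamma$ and $\gamma$ then yields the formula. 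This step is precisely the content of [S, Theorem 2.1], and its logic parallels the proof of Proposition \ref{propSpodz}, with the improper intersection multiplicity replaced by the {\L}ojasiewicz exponent.
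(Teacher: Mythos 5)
The easy inequality ${\cal L}_0(f)\le{\cal L}_0(L\circ f)$ and the sufficiency direction ($V(L)\cap C_0=\{0\}\Rightarrow$ equality) are handled correctly: the compactness argument on $C_0\cap S^{2m-1}$ plus the sequential argument from Whitney's definition give $|L(w)|\ge c|w|$ on $f(U)$ near $0$, and this transports any {\L}ojasiewicz estimate for $f$ into one for $L\circ f$ with the same exponent. This is the half that the paper actually uses downstream in Corollary~\ref{reduct1}. (The paper itself supplies no argument here beyond citing \cite[Theorem 2.1]{S}, so there is no detailed ``paper's proof'' to compare against.)

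The gap is in the necessity direction, where your argument rests on the multiplicative identity ${\cal L}_0(L\circ f)={\cal L}_0(f)\cdot\mu_L$. This identity is false. Take $n=2$, $m=3$ and $f(z_1,z_2)=(z_1^5,\,z_2^2,\,z_2^3)$, which is finite at $0$. The image near $0$ is the hypersurface $\{w_2^3=w_3^2\}$, whose Whitney tangent cone at $0$ is $C_0=\{w_3=0\}$. Checking the coordinate axes gives ${\cal L}_0(f)=5$ (realized along $z_2=0$). Now let $L(w_1,w_2,w_3)=(w_1,w_3)$, so $V(L)=\{w_1=w_3=0\}\subset C_0$ and $V(L)\cap C_0\ne\{0\}$. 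One has $L\circ f(z)=(z_1^5,z_2^3)$ with ${\cal L}_0(L\circ f)=5$. Meanwhile, along the branch $z_1=0$ of $f(U)$ one gets $|L(w)|\approx|z_2|^3$ while $|w|\approx|z_2|^2$, so $\mu_L=3/2>1$, and ${\cal L}_0(f)\cdot\mu_L=15/2\ne 5={\cal L}_0(L\circ f)$. The ``$\ge$'' half of your identity — the half you need — is the one that breaks; the obstruction is exactly the one you might fear: the arc realizing ${\cal L}_0(f)$ (here $z_2=0$) is different from the arc on which $L|_{f(U)}$ degenerates (here $z_1=0$), and $\sup_\gamma (a_\gamma b_\gamma)$ is in general strictly smaller than $(\sup_\gamma a_\gamma)(\sup_\gamma b_\gamma)$. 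Attributing this identity to \cite[Theorem 2.1]{S} is not justified.

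Worse, the same example shows that the ``only if'' direction of Proposition~\ref{propSpodz2} as literally stated (with $C_0$ the set-theoretic Whitney tangent cone to $f(U)$) fails: here $V(L)\cap C_0\ne\{0\}$ yet ${\cal L}_0(L\circ f)={\cal L}_0(f)$. So the conclusion you are trying to reach in the converse step is itself not available without some additional hypothesis (e.g.\ genericity, or a finer notion of tangent cone). Your proof of the sufficiency direction is sound and is all the paper needs; the necessity direction cannot be salvaged by the multiplicative formula.
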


From Proposition \ref{propSpodz2}, Facts \ref{factpropertiestangcone}, \ref{factdegreeoftangcone} and Lemma \ref{arange0} we immediately obtain

\begin{wn}\label{reduct1}
Let $m\ge n$ be positive integers, let $\ell=d^n(m-n)+n$ and let $L_j\in\LL(m,1)$, $1\le j\le \ell$, be a system of independent linear functions. Then for any polynomial mapping $f:(\cc^n,0)\to(\cc^m,0)$ finite at $0$ with $\deg f\le d$ we have
$$
{\cal{L}}_0(f)=\min_{1\le i_1<\cdots<i_n\le \ell}{\cal{L}}_0((L_{i_1},\ldots,L_{i_n})\circ f).
$$
\end{wn}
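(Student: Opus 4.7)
The plan is to deduce both inequalities by combining Proposition \ref{propSpodz2} (the semicontinuity statement and its equality criterion) with the cone-avoiding Lemma \ref{arange0}. For the direction $\cal{L}_0(f)\le \min_{i_1<\cdots<i_n}\cal{L}_0((L_{i_1},\ldots,L_{i_n})\circ f)$ there is nothing to do beyond invoking Proposition \ref{propSpodz2}: for each choice of $1\le i_1<\cdots<i_n\le\ell$, the mapping $(L_{i_1},\ldots,L_{i_n})$ is a linear mapping in $\LL(m,n)$, so $\cal{L}_0(f)\le \cal{L}_0((L_{i_1},\ldots,L_{i_n})\circ f)$, and taking the minimum over all such tuples gives the stated inequality without any constraint on $\ell$.

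For the reverse direction I would exhibit a single tuple $(i_1,\ldots,i_n)$ for which equality holds in Proposition \ref{propSpodz2}, i.e.\ $V(L_{i_1},\ldots,L_{i_n})\cap C_0=\{0\}$, where $C_0$ denotes the tangent cone at the origin to $f(U)$ for a small neighbourhood $U$ of $0\in\cc^n$. Since $f$ is finite at $0$, the germ of $f(U)$ at $0$ coincides with that of $\overline{f(\cc^n)}$, so $C_0=C_0(\overline{f(\cc^n)})$. Fact \ref{factpropertiestangcone} then gives $\dim C_0=\dim_0 \overline{f(\cc^n)}= n$, and Fact \ref{factdegreeoftangcone} gives $\delta(C_0)\le d^n$. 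These two bounds are exactly what is required to feed $C_0$ into Lemma \ref{arange0} with parameters $q=n$ and with the role of $d$ in the lemma played by $d^n$; the resulting threshold $d^n(m-n)+n$ matches the value $\ell$ in the statement, and the independence hypothesis on the $L_j$ is already assumed. Hence the lemma supplies $1\le i_1<\cdots<i_n\le \ell$ with $C_0\cap V(L_{i_1},\ldots,L_{i_n})=\{0\}$, and the equality part of Proposition \ref{propSpodz2} yields $\cal{L}_0(f)=\cal{L}_0((L_{i_1},\ldots,L_{i_n})\circ f)$, which combined with the first paragraph completes the proof.

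No real obstacle is anticipated: all the work is in bookkeeping the degrees and dimensions so that Lemma \ref{arange0} applies with the right $(m,q,d)$. The one point worth double-checking is that the tangent cone in Proposition \ref{propSpodz2} (defined as the tangent cone to the image $f(U)$) is the same object as the tangent cone appearing in Fact \ref{factdegreeoftangcone} (the tangent cone to $\overline{f(\cc^n)}$); finiteness of $f$ at $0$ ensures this, since $f(U)$ and $\overline{f(\cc^n)}$ have the same germ at the origin.
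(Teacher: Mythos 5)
Your proof is correct and follows the same route the paper takes (the paper's proof is a one-line appeal to Proposition \ref{propSpodz2}, Facts \ref{factpropertiestangcone}, \ref{factdegreeoftangcone} and Lemma \ref{arange0}). One small inaccuracy: the claim that "the germ of $f(U)$ at $0$ coincides with that of $\overline{f(\cc^n)}$" is not true in general for a finite-at-$0$ polynomial map (the closure $\overline{f(\cc^n)}$ may acquire extra branches through the origin coming from preimages away from $0$; e.g.\ $f(z)=(z(z-1),z^2(z-1))$ has a single smooth branch of $f(U)$ at $(0,0)$ while $\overline{f(\cc)}$ has two). Fortunately this does not affect your argument: what you actually need is only the inclusion $C_0(f(U))\subset C_0(\overline{f(\cc^n)})$, which holds trivially, so the tuple produced by Lemma \ref{arange0} for the larger cone $C_0(\overline{f(\cc^n)})$ a fortiori intersects $C_0(f(U))$ only at $\{0\}$, and the equality criterion of Proposition \ref{propSpodz2} applies.
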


We will need the following two propositions, proved by A.~Płoski  \cite{P1}, {\rm{\cite[Proposition 1.3, Theorem 3.5]{P2}}} for $m=n$ and generalized by S. Spodzieja {\rm{\cite[Corollary 1.3, Proposition 3.1]{S}}} to $m>n$.
% they was proved in \cite{P1}, {\rm{\cite[Proposition 1.3, Theorem 3.5]{P2}}} and in the case $m>n$ it was proved in {\rm{\cite[Corollary 1.3, Proposition 3.1]{S}}}.

\begin{pr}\label{p1}%[\rm\cite{P1}]\label{louo}
Let $f\colon(\cc^n,0)\to (\cc^m,0)$ be a holomorphic mapping finite at~$0$. If $g:(\mathbb{C}^n,0)\rightarrow (\mathbb{C}^m,0)$ is a holomorphic mapping such that $\ord(f-g)>{\cal{L}}_{0}(f)$ then $g$ is finite at $0$ and
 % \begin{equation*}
    ${\cal{L}}_{0}(g)={\cal{L}}_{0}(f)$.
 % \end{equation*}
\end{pr}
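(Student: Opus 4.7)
The plan is to prove this by the standard triangle inequality argument combined with the fact that for a finite mapping the infimum in the definition of the \L ojasiewicz exponent is attained.

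First I would set $\nu = \mathcal{L}_{0}(f)$. Since $f$ is finite at $0$, by the result of Lejeune-Jalabert and Teissier cited just before Proposition~\ref{p0}, the infimum in the definition is a minimum, so there exist $C>0$ and a neighbourhood $U$ of $0\in\cc^n$ such that
\begin{equation*}
|f(z)| \ge C|z|^{\nu} \qquad \text{for } z \in U.
\end{equation*}
The hypothesis $\ord(f-g)>\nu$ gives some $\varepsilon>0$ and $C'>0$ with $|f(z)-g(z)|\le C'|z|^{\nu+\varepsilon}$ on a (possibly smaller) neighbourhood of $0$.

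Next I would combine these via the reverse triangle inequality:
\begin{equation*}
|g(z)| \ge |f(z)| - |f(z)-g(z)| \ge C|z|^{\nu} - C'|z|^{\nu+\varepsilon} = |z|^{\nu}\bigl(C - C'|z|^{\varepsilon}\bigr).
\end{equation*}
Choosing $|z|$ small enough that $C'|z|^{\varepsilon}\le C/2$, we obtain $|g(z)|\ge (C/2)|z|^{\nu}$ near $0$. In particular, $g^{-1}(0)=\{0\}$ in a neighbourhood of $0$, so $g$ is finite at $0$, and by the definition of the \L ojasiewicz exponent we get the inequality $\mathcal{L}_{0}(g) \le \nu = \mathcal{L}_{0}(f)$.

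For the reverse inequality I would apply the exact same argument with the roles of $f$ and $g$ swapped. This is legitimate because $g$ has just been shown to be finite at $0$, and the symmetric hypothesis $\ord(g-f)>\mathcal{L}_{0}(g)$ is automatic from what we have: $\ord(g-f)=\ord(f-g)>\mathcal{L}_{0}(f)\ge \mathcal{L}_{0}(g)$. Repeating the estimate then yields $\mathcal{L}_{0}(f) \le \mathcal{L}_{0}(g)$, giving the desired equality. There is no serious obstacle here; the only point one has to be careful about is invoking the fact that the infimum in the definition is attained (so that the bound $|f(z)|\ge C|z|^{\nu}$ with the exponent exactly $\nu$ is available), rather than only for any $\nu'>\nu$, which is what makes the triangle inequality close cleanly.
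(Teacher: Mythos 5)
Your proof is correct. The paper itself does not reprove this proposition; it merely cites P{\l}oski \cite{P1}, \cite{P2} for $m=n$ and Spodzieja \cite{S} for the general case, so there is no proof in the text to compare against. Your argument is the standard one and is complete: the attained-infimum fact from \cite{LT} gives the genuine inequality $|f(z)|\ge C|z|^{\nu}$ with $\nu={\cal L}_0(f)$; the reverse triangle inequality with $\ord(f-g)>\nu$ yields $|g(z)|\ge (C/2)|z|^{\nu}$ near $0$, hence finiteness of $g$ and ${\cal L}_0(g)\le {\cal L}_0(f)$; and the symmetric hypothesis $\ord(g-f)>{\cal L}_0(g)$ then follows automatically from ${\cal L}_0(g)\le {\cal L}_0(f)<\ord(f-g)$, legitimizing the role swap. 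You are also right that the attained infimum is the crucial technical ingredient here — without it one would have to run the estimate for every exponent $\nu'$ strictly between $\nu$ and $\ord(f-g)$ and then pass to the infimum, which still works but is less clean.
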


\begin{pr}\label{p2}%[\rm\cite{P1}]\label{wykladnikStopien}
Let $f\colon\cc^n\to\cc^m$ be a polynomial mapping finite at $0$ with $\deg f\le d$. Then
%  \begin{equation*}
    ${\cal{L}}_{0}(f)\le d^n$.
  %\end{equation*}
\end{pr}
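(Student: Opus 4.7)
The strategy is to reduce to the square case $m=n$ via Proposition \ref{propSpodz2} and Lemma \ref{arange0}, and then combine Płoski's formula (Proposition \ref{p0}) with a Bezout-type bound on the $t$-degree of the polynomials $P_k$.

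First I would reduce to $m=n$. Since $f$ is polynomial and finite at $0$, the closure $\overline{f(\cc^n)}$ has dimension $n$, so by Fact \ref{factpropertiestangcone} the tangent cone $C_0=C_0(\overline{f(\cc^n)})$ has dimension $n$, and by Fact \ref{factdegreeoftangcone} its total degree satisfies $\delta(C_0)\le d^n$. Applying Lemma \ref{arange0} with parameters $q=n$ and $d^n$ in place of $d$ (to any independent system in $\mathbb{L}(m,1)$ of the required length), I produce a linear mapping $L\in\mathbb{L}(m,n)$ such that $V(L)\cap C_0=\{0\}$. Proposition \ref{propSpodz2} then yields ${\cal L}_0(f)={\cal L}_0(L\circ f)$, and the composite $g:=L\circ f\colon\cc^n\to\cc^n$ is a polynomial mapping with $\deg g\le d$, finite at $0$ (finiteness follows from the fact that ${\cal L}_0(g)<\infty$ precludes nontrivial zeros of $g$ near $0$).

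Next I would apply Płoski's formula (Proposition \ref{p0}) to $g$: writing $P_k(y,t)=\sum_{j\ge 0}P_{k,j}(y)t^j\in\cc\{y,t\}$ for the series regular of order $r_k+1$ in $t$ whose zero set is $\{(g(z),z_k):z\in U_0\}$, we have
\[
{\cal L}_0(g)=\max_{k=1}^n\frac{1}{\Delta(P_k)},\qquad \Delta(P_k)=\min_{j=0}^{r_k}\frac{\ord P_{k,j}}{r_k+1-j}.
\]
Regularity of order $r_k+1$ forces $P_{k,r_k+1}(0)\ne 0$ while $P_{k,j}(0)=0$ for $0\le j\le r_k$, hence $\ord P_{k,j}\ge 1$ in this range; consequently $\Delta(P_k)\ge \min_{j=0}^{r_k}\tfrac{1}{r_k+1-j}=\tfrac{1}{r_k+1}$.

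Finally I would bound $r_k+1\le d^n$. The $t$-degree $r_k+1$ of $P_k$ equals the generic cardinality of the fiber $g^{-1}(y)$ for $y$ near $0$; since $g$ is a polynomial mapping from $\cc^n$ to $\cc^n$ of degree $\le d$ that is finite at $0$, translating $y$ into the target of $g$ and applying the Bezout-type estimate \eqref{deltaproduct} of Fact \ref{Fact1} to $g-y=(g_1-y_1,\dots,g_n-y_n)$ gives $\#g^{-1}(y)\le d^n$. Combining this with the lower bound on $\Delta(P_k)$ yields ${\cal L}_0(g)\le d^n$, and by the reduction ${\cal L}_0(f)={\cal L}_0(g)\le d^n$. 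The main obstacle is the justification that the $t$-degree of $P_k$ coincides with (or is bounded by) the generic fiber size of $g$, which requires carefully unpacking the construction of $P_k$ from \eqref{maintrouble}; everything else is a direct application of the results collected in the previous sections.
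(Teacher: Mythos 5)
The paper does not prove Proposition \ref{p2}; it is stated with a citation to P{\l}oski \cite{P1}, \cite{P2} for $m=n$ and to Spodzieja \cite{S} for $m>n$. So there is no in-paper argument to compare against, and what you have done is supply a self-contained proof built from the paper's own toolkit. Your proof is essentially correct, and it is a sensible route: the reduction to $m=n$ via Fact \ref{factpropertiestangcone}, Fact \ref{factdegreeoftangcone}, Lemma \ref{arange0}, and Proposition \ref{propSpodz2} is exactly the mechanism the paper itself uses for Corollary \ref{reduct1}, and combining P{\l}oski's formula (Proposition \ref{p0}) with the elementary observation $\Delta(P_k)\ge 1/(r_k+1)$ and a Bezout bound on the local multiplicity is a clean way to get the $d^n$ estimate.

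Two small inaccuracies worth flagging, neither fatal. First, you assert that the $t$-degree $r_k+1$ of $P_k$ \emph{equals} the generic cardinality of $g^{-1}(y)$ near $0$; in general one only has $r_k+1\le\mu_0(g)$, since the coordinate $z_k$ need not separate the points of a generic fiber (if two preimages share the same $k$-th coordinate, the reduced $P_k$ drops degree). Only the inequality is needed, so the conclusion stands, but the equality claim should be replaced by $\le$. Second, the justification that $\mu_0(g)\le d^n$ via \eqref{deltaproduct} applied to $g-y$ is a touch loose: what \eqref{deltaproduct} bounds is the total degree of $V(g-y)$, which dominates the number of its isolated points, and for generic small $y$ the local fiber $g^{-1}(y)\cap U_0$ consists of $\mu_0(g)$ isolated points; spelling that intermediate step out would tighten the argument. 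Finally, Lemma \ref{arange0} requires $m\ge 2$; the degenerate case $m=n$ (where no reduction is needed, take $L=\mathrm{id}$) and in particular $m=n=1$ should be dispatched separately, though they are trivial.
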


We will consider the Łojasiewicz exponent of polynomial mappings.
Let $f=(f_1,\ldots,f_m)\colon\cc^n\to\cc^m$, $m\ge n$, be a polynomial mapping such that $f(0)=0$. Let $d$ be the degree of $f$.
The basic difficulty in obtaining effective formulas for the Łojasiewicz exponent is to determine the characteristic polynomials $ P_1, \ldots, P_n $ with holomorphic coefficients. To bypass this difficulty and get the usual polynomials, we will reduce the problem to finding the exponent for proper mappings. For this purpose we define a mapping $H_{L}\colon\mathbb{C}^n\rightarrow \mathbb{C}^n$ by
\begin{equation}\label{0}
H_{f,L}(z)=L\circ f(z)+(z_1^{{d^n}+1},\ldots,z_n^{{d^n}+1}),\quad z=(z_1,\ldots,z_n)\in\cc^n,
\end{equation}
where $d\ge \deg f$ and $L \in \mathbb{L}(m,n)$. Obviously $H_L$ is a proper mapping.

From Corollary \ref{reduct1} and Propositions \ref{p1}, \ref{p2} we obtain 

\begin{wn}\label{reduct11}
Let $m\ge n$ be positive integers, let $\ell=d^n(m-n)+n$, and let $L_j\in\LL(m,1)$, $1\le j\le \ell$, be a system of independent linear functions. Then for any polynomial mapping $f:(\cc^n,0)\to(\cc^m,0)$ finite at $0$ with $\deg f\le d$ we have
$$
{\cal{L}}_0(f)=\min_{1\le i_1<\cdots<i_n\le \ell}{\cal{L}}_0(H_{f,(L_{i_1},\ldots,L_{i_n})}).
$$
\end{wn}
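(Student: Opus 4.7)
The plan is to deduce the statement from Corollary~\ref{reduct1} together with Propositions~\ref{p1} and~\ref{p2}, exploiting the simple but crucial observation that, writing $L_I=(L_{i_1},\ldots,L_{i_n})$ for a multi-index $I=(i_1,\ldots,i_n)$, the mapping $H_{f,L_I}$ differs from $L_I\circ f$ by $(z_1^{d^n+1},\ldots,z_n^{d^n+1})$, whose order is exactly $d^n+1$, while Proposition~\ref{p2} guarantees ${\cal L}_0(f)\le d^n$. This order gap is precisely the room needed by Proposition~\ref{p1}. Note also that $H_{f,L_I}$ is always proper (the monomials $z_k^{d^n+1}$ dominate $L_I\circ f$ at infinity), hence finite at $0$, so Proposition~\ref{p1} may be invoked with $H_{f,L_I}$ as the reference mapping, regardless of whether $L_I\circ f$ is finite at $0$.

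For the upper bound $\min_I {\cal L}_0(H_{f,L_I})\le {\cal L}_0(f)$, Corollary~\ref{reduct1} supplies a multi-index $I^*$ with ${\cal L}_0(L_{I^*}\circ f)={\cal L}_0(f)$. Then $L_{I^*}\circ f$ is finite at $0$ and of degree at most $d$, so Proposition~\ref{p2} yields ${\cal L}_0(L_{I^*}\circ f)\le d^n<d^n+1=\ord(H_{f,L_{I^*}}-L_{I^*}\circ f)$, and Proposition~\ref{p1} gives ${\cal L}_0(H_{f,L_{I^*}})={\cal L}_0(L_{I^*}\circ f)={\cal L}_0(f)$.

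The opposite inequality ${\cal L}_0(H_{f,L_I})\ge {\cal L}_0(f)$, required for \emph{every} $I$, comes from a role-swap in Proposition~\ref{p1}. If ${\cal L}_0(H_{f,L_I})\ge d^n+1$ there is nothing to prove, since ${\cal L}_0(f)\le d^n$ by Proposition~\ref{p2}. Otherwise ${\cal L}_0(H_{f,L_I})<d^n+1=\ord(H_{f,L_I}-L_I\circ f)$, and applying Proposition~\ref{p1} this time with the finite mapping $H_{f,L_I}$ in the role of the base shows that $L_I\circ f$ is itself finite at $0$ and ${\cal L}_0(L_I\circ f)={\cal L}_0(H_{f,L_I})$; combining this with the inequality ${\cal L}_0(L_I\circ f)\ge {\cal L}_0(f)$ furnished by Corollary~\ref{reduct1} (equivalently, by Proposition~\ref{propSpodz2}) completes the proof.

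The main obstacle is conceptual rather than technical: one must resist the temptation to use Proposition~\ref{p1} only in the direction that takes $L_I\circ f$ as the reference mapping, since that mapping may fail to be finite at~$0$ for a given $I$, precisely the pathology that introducing $H_{f,L_I}$ was designed to cure. Routing the argument through the always-proper mapping $H_{f,L_I}$, together with the uniform bound ${\cal L}_0(f)\le d^n$ from Proposition~\ref{p2}, removes this difficulty cleanly.
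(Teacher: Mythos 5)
Your proof is correct and uses exactly the ingredients the paper cites (Corollary~\ref{reduct1} together with Propositions~\ref{p1} and~\ref{p2}), so it is essentially the same approach — the paper simply asserts the deduction without writing out the details. One merit of your write-up is that it makes explicit the genuinely necessary "role-swap" application of Proposition~\ref{p1} with the always-proper mapping $H_{f,L_I}$ as the base for the inequality $\mathcal{L}_0(H_{f,L_I})\ge\mathcal{L}_0(f)$ over all $I$, which handles the case where $L_I\circ f$ fails to be finite at the origin; this is exactly the subtlety the introduction of $H_{f,L}$ is meant to address, and your argument verifies it cleanly.
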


Set
\begin{equation*}
\M(m,n)= \mathbb{L}(m,n)\times \mathbb{L}(n,1)
\end{equation*}
and let $\Phi_{f,L,N}:\cc^n\to\cc^{n+1}$, where  $(L,N)\in \M(m,n)$,
be  given by
$$%\begin{equation}\label{1}
\Phi_{f,L,N}(z)=(H_{f,L}(z),N(z)).
$$%\end{equation}
The mapping $\Phi_{f,L,N}$ is proper and consequently $\Phi_{f,L,N} (\cc^n)\subset \cc^{n+1}$
is an algebraic set of pure dimension $n$.
Hence, there exists a % an irreducible
polynomial $P_{f,L,N}\in \mathbb{C}[y,t]$,
where  $y=(y_1,\ldots,y_n)$ and $y_1,\ldots,y_n,t$ are independent variables,
of the form
%\begin{equation}\label{2}
$$
P_{f,L,N}(,y,t)=\sum ^{r_{f,L,N}}_{j=0}P_{f,L,N,j}(y)t^j
$$
%\end{equation}
such that
\begin{equation}\label{eqPhi1}
\Phi_{f,L,N} (\mathbb{C}^n)=V(P_{f,L,N}).
\end{equation}
Since $H_{f,L,N}$ is a proper mapping, we may assume that $P_{f,L,N,r_{f,L,N}} \not=0$. Moreover, we may assume that $P_{f,L,N}$ is irreducible.

To use the Płoski formula \eqref{Ploskiformula}, we have to guarantee the fulfillment of assumptions \eqref{maintrouble}. This can be done by selecting
%%??
sufficiently many independent linear functions of $L\in\LL(m,1)$ and $N\in\LL(n,1)$. Namely, let
\[
\ell_N=n+[(d^n+1)^n-1]n(n-1), \qquad \ell_L=d^n(m-n)+n.
\]
Take any families of independent linear functions
$$
N_i\in \mathbb{L}(n,1),\quad 1\le i\le \ell_N,\qquad %M_k\in \mathbb{L}(n,1),\; 1\le k\le \ell_M,
L_s\in \mathbb{L}(m,1),\quad 1\le s\le \ell_L.
$$
Let
\[%\begin{multline*}
\mathbb{I}_{L}=\{{\bf s}=(s_1,\ldots,s_n)\in \nn^n:  1\le s_1<\cdots<s_n\le \ell_L\}.
\]%\end{multline*}
Obviously $\#\mathbb{I}_{L}=\binom{\ell_L}{n}$.
Set
\[
L_{\bf s}=(L_{s_1},\ldots,L_{s_n})\in \mathbb{L}(m,n)\quad\hbox{for }{\bf s}\in\mathbb{I}_L.
\]
%Let $f:\cc^n\to\cc^m$ be a polynomial mapping of degree $d$ such that $f(0)=0$.
For any $1\le i\le \ell_N$ and  ${\bf s}\in \mathbb{I}_{L}$ we define a mapping $\Phi_{f,(i,{\bf s})}:\cc^n\to\cc^n\times \cc$ by %the following formula
$$
\Phi_{f,(i,{\bf s})}(z)=\Phi_{f,L_{\bf s},N_i},
$$
i.e., $\Phi_{f,(i,{\bf{s}})}=\left(H_{f,{\bf s}}(z),N_i(z)\right)$, where $H_{f,{\bf s}}(z)=L_{\bf s}(f(z)(z))+(z_1^{d^n+1},\ldots,z_n^{d^n+1})$.
%Let
%$P_{f,(i,{\bf s})}\in \cc[y,t]$ be of the form
%$$
%P_{f,(i,{\bf s})}=P_{f,L_{{\bf s}},N_i}.%\sum_{j=0}^{p_{(i,{\bf k},{\bf s})}}P_{(i,{\bf k},{\bf s}),j}(y)t^j
%$$

The main result of this section is the following theorem.

\begin{tw}\label{maintheorem1} For any polynomial mapping $f:%=(f_1,\ldots,f_m):
(\mathbb{C}^n,0)\to(\mathbb{C}^m,0)$ finite at $0$ of degree $d$ the {\L}ojasiewicz exponent ${\cal{L}}_0(f)$ is given by
$$
{\cal{L}}_0(f)=\min_{{\bf s}\in \mathbb{I}_L}\max_{1\le i\le\ell_N}\frac{1}{\Delta (P_{f,L_{\bf s},N_i})}.
$$
\end{tw}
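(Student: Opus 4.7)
The plan is to combine Corollary~\ref{reduct11}, which reduces the computation of ${\cal L}_0(f)$ to the {\L}ojasiewicz exponents of the finitely many proper polynomial mappings $g_{{\bf s}}:=H_{f,L_{{\bf s}}}\colon\cc^n\to\cc^n$ (each of degree at most $d^n+1$), with P{\l}oski's formula (Proposition~\ref{p0}) applied in appropriately chosen linear coordinates. By Corollary~\ref{reduct11} it suffices to prove, for each fixed ${\bf s}\in\mathbb{I}_L$, that
$$
{\cal L}_0(g_{{\bf s}})=\max_{1\le i\le\ell_N}\frac{1}{\Delta(P_{g_{{\bf s}},N_i})}.
$$

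The upper bound is the easy direction. Because any nonzero $N_i$ can be extended to a basis of $\LL(n,1)$ and ${\cal L}_0$ is invariant under linear changes of coordinates, applying P{\l}oski's formula in the coordinates coming from such a basis yields $\frac{1}{\Delta(P_{g_{{\bf s}},N_i})}\le{\cal L}_0(g_{{\bf s}})$ for every $i$, so $\max_i\frac{1}{\Delta(P_{g_{{\bf s}},N_i})}\le {\cal L}_0(g_{{\bf s}})$.

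The main obstacle is producing some $N_i$ from the family at which equality holds. The plan is to describe the ``bad'' linear forms --- those with $\frac{1}{\Delta(P_{g_{{\bf s}},N})}<{\cal L}_0(g_{{\bf s}})$ --- as precisely those $N$ which vanish on some nonzero point of a certain algebraic cone $C_{{\bf s}}\subset\cc^n$. This cone consists, essentially, of the initial tangent directions of analytic curves $\varphi\colon(\cc,0)\to(\cc^n,0)$ realizing the supremum in ${\cal L}_0(g_{{\bf s}})=\sup_{\varphi}\frac{\ord g_{{\bf s}}\circ\varphi}{\ord\varphi}$; via the Puiseux parametrization of preimages of lines through $0$ under the proper finite mapping $g_{{\bf s}}$, the condition that $N$ is good amounts to $N$ not annihilating the leading coefficient of at least one realizing $\varphi$. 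A careful counting argument, using $\deg g_{{\bf s}}\le d^n+1$, the bound ${\cal L}_0(g_{{\bf s}})\le(d^n+1)^n$ from Proposition~\ref{p2}, and Fact~\ref{factdegreeoftangcone} applied to a suitable tangent cone associated to $g_{{\bf s}}$, then gives $\dim C_{{\bf s}}\le 1$ and $\delta(C_{{\bf s}})\le (d^n+1)^n-1$. Establishing these dimension and degree estimates on $C_{{\bf s}}$ is the technical heart of the proof.

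Given the estimates on $C_{{\bf s}}$, the rest is immediate from Corollary~\ref{lemeverysystem}: applied with $m=n$, $q=1$, $k=n$ and $d=(d^n+1)^n-1$, it produces exactly the number $\ell_N=n+[(d^n+1)^n-1]n(n-1)$ and guarantees the existence of $n$ indices $1\le j_1<\cdots<j_n\le\ell_N$ with $C_{{\bf s}}\cap V(N_{j_i})=\{0\}$ for each $i$. Any such $N_{j_i}$ is good, so $\frac{1}{\Delta(P_{g_{{\bf s}},N_{j_i}})}={\cal L}_0(g_{{\bf s}})$, and combined with the upper bound from the second paragraph this yields the required equality for each ${\bf s}$. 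Minimizing over ${\bf s}\in\mathbb{I}_L$ and invoking Corollary~\ref{reduct11} then completes the proof.
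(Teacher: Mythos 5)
Your overall scaffolding (reduce via Corollary~\ref{reduct11} to computing ${\cal L}_0(H_{f,L_{\bf s}})$ for each ${\bf s}$, then apply P{\l}oski's formula) matches the paper, but both halves of the per-${\bf s}$ argument have real gaps, and the route you take through the lower bound is structurally different from the paper's and harder to carry out.

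For the upper bound you write that extending $N_i$ to a basis and applying P{\l}oski's formula gives $\frac{1}{\Delta(P_{g_{\bf s},N_i})}\le{\cal L}_0(g_{\bf s})$. This step silently identifies $P_{g_{\bf s},N_i}$ with the characteristic polynomial required in Proposition~\ref{p0}, i.e.\ the one satisfying the germ condition \eqref{maintrouble}. But $P_{g_{\bf s},N_i}$ cuts out the \emph{global} image of $\Phi_{g_{\bf s},N_i}$, and if $g_{\bf s}$ has a nonzero zero $z_0$ with $N_i(z_0)=0$, the germ of $V(P_{g_{\bf s},N_i})$ at the origin picks up extra branches and \eqref{maintrouble} fails --- this is exactly what Example~\ref{p1new} illustrates. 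The paper closes this gap by observing that the true characteristic polynomial divides $P_{g_{\bf s},N_i}$ and invoking Lemma~\ref{RSformula} to pass the $\Delta$-inequality through the division. Without that lemma your upper bound is not justified.

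For the lower bound your plan is to characterize the ``bad'' $N$ (those with $\frac{1}{\Delta(P_{g_{\bf s},N})}<{\cal L}_0(g_{\bf s})$) as those vanishing somewhere on a cone $C_{\bf s}$ of tangent directions of {\L}ojasiewicz-realizing curves, and to bound $\dim C_{\bf s}\le 1$, $\delta(C_{\bf s})\le(d^n+1)^n-1$. Two problems. First, that characterization is wrong as stated: avoiding the tangent directions of optimal curves does not prevent $P_{g_{\bf s},N}$ from acquiring extra factors from nonzero zeros of $g_{\bf s}$, which can strictly lower $\frac{1}{\Delta}$; you would have to fold the preimage $g_{\bf s}^{-1}(0)\setminus\{0\}$ into $C_{\bf s}$ as well. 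Second, you concede that the dimension and degree bounds on your $C_{\bf s}$ are ``the technical heart'' without giving an argument, and there is no evident B\'ezout-type bound for the set of optimal tangent directions. The paper avoids all of this: it takes for $C_{\bf s}$ simply the union of lines through the finitely many nonzero points of $g_{\bf s}^{-1}(0)$, bounds its degree by $(d^n+1)^n-1$ via B\'ezout applied to the proper map $g_{\bf s}$, and uses Corollary~\ref{lemeverysystem} to extract $n$ \emph{independent} forms $N_{i_1},\ldots,N_{i_n}$ each avoiding this cone. For these, Lemma~\ref{lemequalgerms} guarantees \eqref{maintrouble}, so P{\l}oski's formula applied to that coordinate system gives ${\cal L}_0(g_{\bf s})=\max_j\frac{1}{\Delta(P_{g_{\bf s},N_{i_j}})}$. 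Note that the paper never needs (and never proves) that any single good $N$ by itself attains $\frac{1}{\Delta}={\cal L}_0(g_{\bf s})$ --- only that the max over $n$ of them does --- whereas your final sentence asserts pointwise equality for each good $N_{j_i}$, which is a stronger and unproved claim. You should adopt the paper's coordinate-system viewpoint and the $g_{\bf s}^{-1}(0)$ cone; as written, both the characterization and the estimate behind your $\ell_N$ are missing.
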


We will precede the proof of the above theorem with a remark and an example.

\begin{rem}\label{remtomainthm1}
Since $P_{f,L,N}$ vanishes exactly on the image of $\Phi_{f,L,N}$, it can be effectively computed, for instance  by using Gr\"obner bases (see \cite{GP}). One can also compute it as a multipolynomial resultant of the coordinates of $\Phi_{f,L,N}$ (see \cite{Gelfand}).
An important point in this construction is properness of the mappings $H_{f,L,N}$ and $ \Phi_{f,L,N} $, which we owe to the component $(z_1^{{d^n}+1},\ldots,z_n^{{d^n}+1})$.  Therefore, the polynomials $ P_{f,L,N} $ in Theorem \ref{maintheorem1} are regular in $t$ and we can compute the numbers $\Delta(P_{f,L,N})$.
%%??
If we omit this component, with fixed $ L, M $ and $N=z_k$ the polynomials $P_{f,L,N}$ may not satisfy \eqref{maintrouble} (see Example \ref{p1new} below). %We overcome this difficulty by considering generic coefficients of the mappings.
\end{rem}

%In Section \ref{overdet} we give a generalization of the above theorem in the case of overdetermined mappings (see Corollary \ref{p4gener}).
\begin{exa}\label{p1new}
%Let $f=(f_1,\ldots,f_n):(\mathbb{C}^n,0)\to(\mathbb{C}^n,0)$ be a polynomial mapping of degree $d$. Take the polynomial mapping $H:\mathbb{C}^n\to\mathbb{C}^n$ defined by
%\begin{equation}\label{eqHeq}
%H:\mathbb{C}^n\ni (z_1,\ldots,z_n)=z \mapsto (f_1(z)+z_1^{d^n+1},\ldots,f_n(z)+ z_n^{d^n+1})\in\mathbb{C}^n.
%\end{equation}
%It is easy to see that  $H$ is proper, because it has no zeros at infinity. Therefore, the polynomials $ P $ in Theorem \ref{maintheorem1} are regular in $t$. Properness of $H$ is essential  to Theorem \ref{maintheorem1}, as illustrated by the following example.
The polynomial mapping $f=(f_1,f_2):\mathbb{C}^2\to\mathbb{C}^2$ defined by
$$
f_1(z_1,z_2)=z_1(1-z_1^2-z_2^2),\quad f_2(z_1,z_2)=z_2(1-z_1^2-z_2^2)
$$
is finite at $(0,0)\in\mathbb{C}^2$ and it is not proper. For the  irreducible polynomial
$$
P(y_1,y_2,t)=t^3(y_1^2+y_2^2)-ty_1^2+y_1^3
$$
we have $P(f(z_1,z_2),z_1)=0$ for $(z_1,z_2)\in\mathbb{C}^2$. Moreover, the polynomial $P$ is not  regular in $t$ and we cannot  compute the number $\Delta(P)$.
\end{exa}

\begin{proof}[Proof of Theorem \ref{maintheorem1}]

By an analogous argument to the proof of \cite[Lemma 1]{RS} we obtain 

\begin{lem}\label{RSformula}
If $P,Q,R\in \cc\{y,t\}$ are series such that
$$
P(y,t)=\sum_{j=0}^\infty P_j(y)t^j,\quad Q(y,t)=\sum_{j=0}^\infty Q_j(y)t^j,
$$
and $Q=PR$ and for some $r\ge 0$ we have $\ord P_j,\,\ord Q_j>0$, $j=0,\ldots,r$, then
\begin{equation}\label{eqineqRS}
\min_{0\le j\le r}\frac{\ord P_j}{r+1-j}\le \min_{0\le j\le r}\frac{\ord Q_j}{r+1-j}.
\end{equation}
If additionally $\ord R=0$, then  equality holds in \eqref{eqineqRS}.
\end{lem}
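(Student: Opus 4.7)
The plan is to reduce the inequality to straightforward bookkeeping on the orders of coefficients in the Cauchy product. Writing $R(y,t)=\sum_{k\ge 0} R_k(y)t^k$, the identity $Q=PR$ gives $Q_j=\sum_{k=0}^{j} P_{j-k} R_k$, so for every $j$,
\[
\ord Q_j \ge \min_{0\le k\le j}\bigl(\ord P_{j-k} + \ord R_k\bigr).
\]

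I would set $\alpha=\min_{0\le j\le r}\frac{\ord P_j}{r+1-j}$, which is strictly positive because each $\ord P_j>0$ by hypothesis. By definition, $\ord P_i\ge \alpha(r+1-i)$ for all $0\le i\le r$. For $0\le j\le r$ and $0\le k\le j$ one has $0\le j-k\le r$, so $\ord P_{j-k}+\ord R_k\ge \alpha(r+1-j+k)+0\ge \alpha(r+1-j)$. Consequently $\ord Q_j\ge \alpha(r+1-j)$, which rearranges to $\frac{\ord Q_j}{r+1-j}\ge \alpha$, and taking the minimum over $0\le j\le r$ yields \eqref{eqineqRS}.

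For the equality assertion I would use that $\ord R=0$ means $R_0(0)\ne 0$, so $R_0$ is a unit in $\cc\{y\}$ and $R$ itself is a unit in $\cc\{y,t\}$. Its inverse $S=R^{-1}$ satisfies $S_0=R_0^{-1}$, hence $\ord S=0$ as well. Then $P=QS$ is a product of the same shape, and since the hypothesis $\ord Q_j>0$ for $0\le j\le r$ is part of the data, the already proved inequality applies with the roles of $P$ and $Q$ interchanged. This produces the reverse inequality, forcing equality.

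No deep obstacle arises; the only point requiring a moment of care is verifying that the hypotheses are preserved under the swap $P\leftrightarrow Q$ in the equality case, which is transparent once one notes that $R^{-1}$ inherits $\ord=0$ from $R$.
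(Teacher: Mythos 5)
Your proof is correct. The paper itself does not spell out an argument for this lemma --- it only states that the result follows by an argument analogous to that of \cite[Lemma 1]{RS} --- and your route is the natural one and surely matches that reference: bound $\ord Q_j$ from below via the Cauchy product, $\ord Q_j\ge\min_{0\le k\le j}(\ord P_{j-k}+\ord R_k)\ge\alpha(r+1-j)$ with $\alpha$ the left-hand minimum, and in the $\ord R=0$ case observe that $R$ is a unit in $\cc\{y,t\}$ whose inverse again has order $0$, so the already proved inequality can be applied with $P$ and $Q$ exchanged.
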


Fix ${\bf s}=(s_1,\ldots,s_n)$ with $1\le s_1<\cdots<s_n\le \ell_L$ and let $L_{{\bf s}}=(L_{s_1},\ldots,L_{s_n})\in \LL(m,n)$.

\begin{lem}\label{lemgenericN}
There are $1\le i_1<\cdots<i_n\le \ell_N$ such that
\begin{equation}\label{genericN}
V(H_{f,L_{\bf s}})\cap V(N_{i_j})=\{0\}\quad\hbox{for }1\le j\le n.
\end{equation}
\end{lem}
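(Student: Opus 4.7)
The plan is a straightforward pigeonhole argument enabled by the careful choice of $H_{f,L}$. First, I would observe that $H_{f,L_{\bf s}}\colon\cc^n\to\cc^n$ is proper: each of its coordinates has the form $L_{s_j}\circ f + z_j^{d^n+1}$ with $\deg(L_{s_j}\circ f)\le d<d^n+1$, so the monomial $z_j^{d^n+1}$ dominates and $|H_{f,L_{\bf s}}(z)|\to\infty$ as $|z|\to\infty$. In particular, $V(H_{f,L_{\bf s}})$ is a finite set, and Bezout's theorem immediately yields
\[
\#V(H_{f,L_{\bf s}})\le (d^n+1)^n.
\]

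Next, I would exploit the independence hypothesis on $N_1,\ldots,N_{\ell_N}$. By definition, any $n$ of these linear forms are linearly independent over $\cc$, so their common zero locus in $\cc^n$ is $\{0\}$. Consequently, for every nonzero $p\in\cc^n$, at most $n-1$ indices $i\in\{1,\ldots,\ell_N\}$ can satisfy $N_i(p)=0$; otherwise some $n$ of the $N_i$'s would all vanish at $p\ne 0$, contradicting independence.

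The argument then concludes by counting. Call an index $i$ \emph{bad} if $N_i$ vanishes at some point of $V(H_{f,L_{\bf s}})\setminus\{0\}$. Combining the two estimates above,
\[
\#\{\text{bad indices}\}\le (n-1)\bigl((d^n+1)^n-1\bigr),
\]
so the number of good indices is at least
\[
\ell_N-(n-1)\bigl((d^n+1)^n-1\bigr)=n+(n-1)^2\bigl((d^n+1)^n-1\bigr)\ge n.
\]
Selecting any $n$ good indices $i_1<\cdots<i_n$ produces the required property \eqref{genericN}.

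The proof is elementary; the only step worth emphasising is the initial observation, which shows that the deforming term $(z_1^{d^n+1},\ldots,z_n^{d^n+1})$ in the definition of $H_{f,L}$ is precisely what guarantees the Bezout bound $(d^n+1)^n$ around which the formula $\ell_N=n+[(d^n+1)^n-1]n(n-1)$ was designed. No deeper geometric input (e.g. Lemma~\ref{arange0} or the tangent cone machinery) is needed here, since $V(H_{f,L_{\bf s}})$ is already a finite set rather than a positive-dimensional cone.
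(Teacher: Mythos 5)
Your proof is correct, and the key ingredients (properness of $H_{f,L_{\bf s}}$ and the Bezout bound $(d^n+1)^n$) are exactly those the paper uses. The route differs in the final step: the paper invokes Corollary~\ref{lemeverysystem} (with $q=1$, $m$ replaced by $n$, $k=n$, and the degree parameter replaced by $(d^n+1)^n-1$, applied to the cone of lines through the nonzero zeros of $H_{f,L_{\bf s}}$), whereas you carry out a direct pigeonhole count. Your count is in fact sharper: each nonzero zero rules out at most $n-1$ forms, giving at most $(n-1)\bigl[(d^n+1)^n-1\bigr]$ bad indices, whereas the Corollary's inductive construction certifies only that $\ell_N=n+n(n-1)\bigl[(d^n+1)^n-1\bigr]$ forms suffice — the extra factor of $n$ is an artifact of the induction in Corollary~\ref{lemeverysystem}, which does not exploit that the cone here is the same at each step of the induction. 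So your argument is more elementary, avoids the machinery of Lemma~\ref{arange0} and Corollary~\ref{lemeverysystem} entirely, and shows that a smaller $\ell_N$ would already do; since the lemma is stated for the paper's (larger) $\ell_N$, your proof certainly establishes it.
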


\begin{proof} Since  $H_{L,M}$ has no zeros at infinity, it is proper. Consequently, by Bezout's theorem it has at most $(d^n+1)^n-1$ zeros in $\cc^n\setminus\{0\}$. Thus by Corollary \ref{lemeverysystem} we obtain \eqref{genericN}.
\end{proof}

\begin{lem}\label{lemequalgerms}
If $N \in \mathbb{L}(n,1)$ satisfies \eqref{genericN}, then for any sufficiently small $\epsilon >0$ the germs of the sets
$$
A_\epsilon =\{(H_{f,L_{\bf s}}(z),N(z)):|z|<\epsilon \}\;\;  \hbox{ and }\;\;  B=\{(y,t):P_{f,L_{\bf s},N}(y,t)=0\}
$$
 at $0 \in \mathbb{C}^{n+1}$ are equal.
\end{lem}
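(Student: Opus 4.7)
The plan is to establish the two set-theoretic inclusions of germs at $0 \in \cc^{n+1}$. The inclusion $A_\epsilon \subset B$ is essentially free from the definition of $P_{f,L_{\bf s}, N}$: by \eqref{eqPhi1} we have $\Phi_{f,L_{\bf s}, N}(\cc^n) = V(P_{f,L_{\bf s}, N}) = B$ globally, and $A_\epsilon$ is exactly the image under $\Phi_{f,L_{\bf s}, N}$ of the open ball of radius $\epsilon$, hence $A_\epsilon \subset B$ for every $\epsilon > 0$.

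The substance lies in the reverse inclusion $B \subset A_\epsilon$ as germs, which I would deduce from properness of $\Phi_{f,L_{\bf s}, N}$ combined with hypothesis \eqref{genericN}. First, $H_{f,L_{\bf s}}$ is proper on $\cc^n$, since the summand $(z_1^{d^n+1}, \ldots, z_n^{d^n+1})$ dominates at infinity and forces $|H_{f,L_{\bf s}}(z)| \to \infty$ as $|z| \to \infty$; consequently $\Phi_{f,L_{\bf s}, N}$ is proper as well. Second, since the $N$ at hand is one of the $N_{i_j}$'s appearing in \eqref{genericN}, we obtain
$$
\Phi_{f,L_{\bf s}, N}^{-1}(0) = V(H_{f,L_{\bf s}}) \cap V(N) = \{0\}.
$$

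With these two ingredients in place, I would invoke the standard topological fact that a proper continuous map $F$ with single-point fiber $F^{-1}(0) = \{0\}$ satisfies: for every $\epsilon > 0$ there exists a neighborhood $U$ of $0$ in the target such that $F^{-1}(U) \subset \{z \colon |z| < \epsilon\}$. Applying this with $F = \Phi_{f,L_{\bf s}, N}$, any point $(y, t) \in B \cap U = \Phi_{f,L_{\bf s}, N}(\cc^n) \cap U$ is the image $\Phi_{f,L_{\bf s}, N}(z)$ of some $z \in \cc^n$, and the inclusion forces $|z| < \epsilon$. Hence $(y, t) \in A_\epsilon$, so $B \cap U \subset A_\epsilon$, which proves the equality of germs at $0$.

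I do not anticipate a serious obstacle. Both needed inputs---properness of $\Phi_{f,L_{\bf s}, N}$ and the single-point fiber condition---are essentially handed to us by the construction of $H_{f,L_{\bf s}}$ and by the assumption \eqref{genericN}, respectively. The only care required is to invoke the correct topological statement that a proper map shrinks preimages of small neighborhoods of a single-point fiber, which is an entirely standard consequence of the definition of properness.
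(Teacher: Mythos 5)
Your proof is correct and is essentially the paper's argument, just packaged more cleanly: you isolate the standard topological fact that a proper map with a singleton fiber over $0$ shrinks preimages of small neighborhoods, while the paper proves exactly that fact inline via a bounded-sequence/subsequence contradiction using properness of $H_{f,L_{\bf s}}$ and hypothesis \eqref{genericN}. The two inputs you identify (properness of $H_{f,L_{\bf s}}$, hence of $\Phi_{f,L_{\bf s},N}$, and the single-point fiber from \eqref{genericN}) are precisely the ones the paper uses.
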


\begin{proof}
It suffices to prove that for any $\epsilon >0$ there exists a neighbourhood $U_0$ of $0 \in \mathbb{C}^{n+1}$ such that $A_\epsilon \cap U_0=B\cap U_0$. Indeed, suppose there exists $\epsilon >0$ such that for any neighbourhood $U_0$ of zero we have $A_\epsilon \cap U_0\not=B\cap U_0$. Obviously $A_\epsilon \subset B$, hence $A_\epsilon \cap U_0\subset B\cap U_0$. So, the inclusion $A_\epsilon \cap U_0 \supset B\cap U_0$ fails, i.e., there exists a sequence $(y_\nu ,t_\nu )\in B\cap U_0$ such that $(y_\nu ,t_\nu )\rightarrow (0,0)$ and $(y_\nu ,t_\nu )\not\in A_\epsilon \cap U_0$.
By \eqref{eqPhi1} there exists a sequence $z_\nu \in \mathbb{C}^n$ such that $y_\nu =H_{L_{\bf s}}(z_\nu )$ and $t_\nu =N(z_\nu )$. Thus
$$
(H_{f,L_{\bf s}}(z_\nu ),N(z_\nu ))\rightarrow (0,0)
$$
as $\nu \rightarrow \infty $. The sequence $(z_\nu )$ is bounded (by properness of $H_{f,L_{\bf s}}$); we can assume that  $z_\nu \rightarrow z_0$ as $\nu \rightarrow \infty $. Then
$$
(H_{f,L_{\bf s}}(z_0 ),N(z_0 ))=(0,0),
$$
so, by \eqref{genericN}, $z_0=0$ and $z_\nu \rightarrow 0$ as $\nu \rightarrow \infty $.
On the other hand, $(y_\nu ,t_\nu )\not\in A_\epsilon \cap U_0$ implies $|z_\nu |\ge \epsilon $,  a contradiction.
\end{proof}

By Lemmas \ref{lemgenericN} and \ref{lemequalgerms}, for any fixed ${\bf s}$  we can find $1\le i_1,\ldots,i_n\le \ell_N$ such that $P_{f,L_{\bf s},N_{i_j}}$ satisfy the assumption \eqref{maintrouble} of Proposition \ref{p0}. Note that if  $P_{f,L_{\bf s},N_i}$ does not satisfy \eqref{maintrouble}, the characteristic polynomial for $\Phi_{f,L_{\bf s},N_i}$ is a divisor of $P_{f,L_{\bf s},N_i}$.  Hence, in view of Lemma \ref{RSformula}, we obtain
$$
{\cal{L}}_0(H_{f,L_{\bf s}})=\max_{1\le i\le \ell_N}\frac{1}{\Delta (P_{f,L_{\bf s},N_i})}.
$$
This together with Corollary \ref{reduct11} gives the assertion of Theorem \ref{maintheorem1}.
%Having regard to Lemma \ref{RSformula} and Corollary \ref{reduct11}, it suffices to show, that for any fixed $(L_{s_1},\ldots,L_{s_n})\in \LL(m,n)$, $1\le s_1<\cdots,s_n\le \ell_L$ there are $N_{i_1},\ldots,N_{i_n}\in\LL(n,1)$ such that
\end{proof}

Theorem \ref{maintheorem1} simplifies in the case of proper  polynomial mappings finite at $0$. By an analogous argument to that in the proof of Theorem \ref{maintheorem1} we obtain

\begin{wn}\label{cormaintheorem} Let $d,n,m$, $m\ge n$, be positive integers, let
\[
\ell_N=(d^n-1)n(n-1)+n,\quad \ell_L=d^n(m-n)+n,
\]
and let $N_i\in\mathbb{L}(n,1)$, $1\le i\le \ell_N$, $L_j\in \mathbb{L}(m,1)$, $1\le j\le \ell_L$, be sequences of independent functions.
Then for any proper polynomial mapping $f:(\mathbb{C}^n,0)\to(\mathbb{C}^n,0)$ finite at $0$ of degree $d$ there exist polynomials
\[
P_{f,(L_{s_1},\ldots,L_{s_n}),N_i} \in \mathbb{C}[y,t],\quad 1\le i\le \ell_N,\quad 1\le s_1<\cdots<s_n\le \ell_L,
\]
 regular in $t$,  such that  the set $V(P_{f,(L_{s_1},\ldots,L_{s_n}),N_i})$  is equal to the image of $\mathbb{C}^n$ under the mapping $z \mapsto ((L_{s_1},\ldots,L_{s_n})\circ f(z),N_i(z))$ and
$$
\mathcal{L}_0(f)=\min_{1\le s_1<\cdots<s_n\le\ell_L}\max_{1\le i\le\ell_N}\frac{1}{\Delta (P_{f,(L_{s_1},\ldots,L_{s_n}),N_i})}.
$$
%  for $k=1,\ldots, s$, and formula \eqref{6}.
%$$
%{\cal{L}}_{0}(f)= \min_{1\le i_1<\cdots,i_n\le s}\max _{k=1}^n \frac{1}{\Delta (P_{i_k})}.
%$$
\end{wn}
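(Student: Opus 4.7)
The plan is to mirror the proof of Theorem~\ref{maintheorem1} essentially verbatim, taking advantage of two simplifications that come from the hypothesis that $f$ is proper. First, the auxiliary perturbation $H_{f,L}(z)=L\circ f(z)+(z_1^{d^n+1},\ldots,z_n^{d^n+1})$ is no longer required to force properness, so we may work directly with the composition $g_{\mathbf{s}}=L_{\mathbf{s}}\circ f\colon\cc^n\to\cc^n$, a proper polynomial map of degree $\le d$. Second, the B\'ezout bound on the number of nonzero isolated zeros of such a map improves from $(d^n+1)^n-1$ to $d^n-1$, which is precisely what produces the smaller value $\ell_N=(d^n-1)n(n-1)+n$ via formula~\eqref{eqformellk} of Corollary~\ref{lemeverysystem} applied with $m=n$, $q=1$, $k=n$, and effective degree $d^n-1$.

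Concretely, I would proceed as follows. First, Corollary~\ref{reduct1} reduces the problem to $\mathcal{L}_0(f)=\min_{\mathbf{s}\in\mathbb{I}_L}\mathcal{L}_0(g_{\mathbf{s}})$, using exactly the $\ell_L=d^n(m-n)+n$ independent forms $L_j$. Fix $\mathbf{s}$; by B\'ezout, $g_{\mathbf{s}}^{-1}(0)\setminus\{0\}$ consists of at most $d^n-1$ isolated points, so the union of complex lines through these points is an algebraic cone in $\cc^n$ of dimension $\le 1$ and total degree $\le d^n-1$. Applying Corollary~\ref{lemeverysystem} to this cone produces indices $1\le i_1<\cdots<i_n\le \ell_N$ with $V(N_{i_j})\cap g_{\mathbf{s}}^{-1}(0)=\{0\}$ for every $j$. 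Since $g_{\mathbf{s}}$ is proper, each map $\Phi_{\mathbf{s},i}(z)=(g_{\mathbf{s}}(z),N_i(z))\colon\cc^n\to\cc^{n+1}$ has closed algebraic image of pure dimension $n$, hence equals $V(P_{\mathbf{s},i})$ for a unique irreducible $P_{\mathbf{s},i}\in\cc[y,t]$; regularity in $t$ holds because $V(P_{\mathbf{s},i})$ meets the $t$-axis in the finite set $\Phi_{\mathbf{s},i}(g_{\mathbf{s}}^{-1}(0))$ and so cannot contain it. For the indices $i_j$, the argument of Lemma~\ref{lemequalgerms} (with $g_{\mathbf{s}}$ in place of $H_{f,L_{\mathbf{s}}}$) shows that hypothesis~\eqref{maintrouble} of Proposition~\ref{p0} holds, so P{\l}oski's formula gives $\mathcal{L}_0(g_{\mathbf{s}})=\max_{j}1/\Delta(P_{\mathbf{s},i_j})$. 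For any remaining $i$ the true characteristic polynomial divides $P_{\mathbf{s},i}$, and Lemma~\ref{RSformula} yields $1/\Delta(P_{\mathbf{s},i})\le\mathcal{L}_0(g_{\mathbf{s}})$; combining these observations with the initial reduction completes the proof.

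The step I expect to require the most care is verifying that $g_{\mathbf{s}}=L_{\mathbf{s}}\circ f$ is genuinely proper: properness of $f$ does not automatically transfer to its composition with the non-injective linear projection $L_{\mathbf{s}}\colon\cc^m\to\cc^n$, since $\ker L_{\mathbf{s}}$ might meet the cone of limit directions of $\overline{f(\cc^n)}$ at infinity in a nontrivial locus. This cone is however algebraic of dimension $\le n$ with total degree bounded by a power of $d$, so the same mechanism of Lemma~\ref{arange0} underlying Corollary~\ref{reduct1} applies in parallel to both the tangent cone at $0$ and the cone of limit directions at infinity; for a suitable choice of $\mathbf{s}$ within the given independent system, $g_{\mathbf{s}}$ is simultaneously proper, finite at $0$, and satisfies $\mathcal{L}_0(g_{\mathbf{s}})=\mathcal{L}_0(f)$. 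Once this is settled, the remainder of the argument is a direct translation of the proof of Theorem~\ref{maintheorem1}.
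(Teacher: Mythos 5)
Your first two paragraphs capture the correct simplification, and the arithmetic is right: dropping the $(z_1^{d^n+1},\ldots,z_n^{d^n+1})$ perturbation, bounding the nonzero isolated zeros of a proper degree-$d$ polynomial map $\cc^n\to\cc^n$ by $d^n-1$ via B\'ezout, and plugging into Corollary \ref{lemeverysystem} (with $m=n$, $q=1$, $k=n$, degree parameter $d^n-1$) does give $\ell_N=(d^n-1)n(n-1)+n$, after which Lemma \ref{lemequalgerms}, Proposition \ref{p0}, Lemma \ref{RSformula} and Corollary \ref{reduct1} finish exactly as in the proof of Theorem \ref{maintheorem1}.

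The gap is precisely where you flag it, but your resolution does not close it. If the corollary is read with $m>n$ and $f\colon(\cc^n,0)\to(\cc^m,0)$ proper, two things go wrong. First, the statement asserts that $V(P_{f,L_{\bf s},N_i})$ equals the \emph{image} for every admissible tuple $\bf s$, so you would need $L_{\bf s}\circ f$ proper for \emph{all} $\bf s$; Lemma \ref{arange0} and Corollary \ref{reduct1} produce only one good $\bf s$. Second, even to locate a single $\bf s$ that simultaneously avoids the tangent cone of $\overline{f(\cc^n)}$ at the origin (needed for $\mathcal{L}_0(L_{\bf s}\circ f)=\mathcal{L}_0(f)$ via Proposition \ref{propSpodz2}) and its cone of limiting directions at infinity (needed for properness of $L_{\bf s}\circ f$), you must feed Lemma \ref{arange0} the union of two cones, each of total degree $\le d^n$ and dimension $\le n$, which would require $\ell_L\ge 2d^n(m-n)+n$, not the given $d^n(m-n)+n$; the phrase ``the same mechanism applies in parallel'' conceals exactly this doubling. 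The corollary is in fact printed with codomain $\cc^n$, i.e.\ $m=n$: then $\ell_L=n$, the minimum over $\bf s$ runs over the single tuple $(1,\ldots,n)$, $L_{\bf s}=(L_1,\ldots,L_n)$ is a linear automorphism of $\cc^n$ by independence, $L_{\bf s}\circ f$ is automatically proper, and the concern in your last paragraph is vacuous --- your second paragraph already constitutes a complete argument. As written for $m>n$, however, the proposal leaves a genuine unfilled gap.
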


Theorem \ref{maintheorem1} and Corollary \ref{cormaintheorem} give effective algorithms for computing the multiplicity and the  local {\L}ojasiewicz exponent of a polynomial mapping $\mathbb{C}^n\to\mathbb{C}^n$ finite at $0$. Moreover, they allow one to decide whether a polynomial mapping $(\mathbb{C}^n,0)\to(\mathbb{C}^n,0)$ is finite at $0$. Namely, by calculating the Łojasiewicz exponents of mappings $(f, M):\cc^n\to\cc^{m+q}$ for $M\in\LL(n,q)$ instead of $f$, we obtain

\begin{pr}\label{pr5newdim}
Let $d,n,m$, $m\ge n$, be positive integers, let $0\le q\le n$, and let
\[
%\ell_N=n+[(d^n+1)^n-1]n(n-1), \quad
\ell_L=d^n(m-n)+n, \quad\ell_M=d^n(n-q)+q.
\]
Let %$N_i\in\mathbb{L}(n,1)$, $1\le i\le \ell_N$,
$L_j\in \mathbb{L}(m,1)$, $1\le j\le \ell_L$, $M_k\in\LL(n,1)$, $1\le k\le \ell_M$, be sequences of independent functions. Let $f:(\cc^n,0)\to\cc^m,0)$ be a polynomial mapping of degree $d$. Set
$$
H_{f,L_{\bf s},M_{\bf k}}(z)=L_{\bf s}\circ(f(z),M_{\bf k}(z))+\left(z_1^{d^n+1},\ldots,z_n^{d^n+1}\right)
$$
for $L_{\bf s}=(L_{s_1},\ldots,L_{s_n})$, ${\bf s}=(s_1,\ldots,s_n)\in \mathbb{I}_L$ and $M_{\bf k}=(M_{k_1},\ldots,M_{k_q})$, ${\bf k}=(k_1,\ldots,k_q)$, $1\le k_1<\cdots<k_q\le\ell_M$.

\indent 1. If $\dim_0 V(f)\ge q+1$ then
$$
\min_{{\bf s},\,{\bf k}}\mathcal{L}_0(H_{f,L_{\bf s},M_{\bf k}})\ge d^n+1.
$$

\indent 2. If $\dim_0 V(f)\le q$ then
$$
\min_{{\bf s},\,{\bf k}} \mathcal{L}_0(H_{f,L_{\bf s},M_{\bf k}})\le d^n.
$$
\end{pr}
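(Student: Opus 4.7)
The plan is to interpret the proposition as a statement about the auxiliary polynomial mapping $g_{\bf k}:=(f,M_{\bf k})\colon(\cc^n,0)\to(\cc^{m+q},0)$, which has degree $d$ and preimage $g_{\bf k}^{-1}(0)=V(f)\cap V(M_{\bf k})$ at $0$. Since $L_{\bf s}\circ(f,M_{\bf k})=L_{\bf s}\circ g_{\bf k}$, the mapping $H_{f,L_{\bf s},M_{\bf k}}$ coincides with the proper mapping $H_{g_{\bf k},L_{\bf s}}$ from \eqref{0} applied to $g_{\bf k}$. Consequently, whenever $g_{\bf k}$ is finite at $0$, Corollary~\ref{reduct11} applied to $g_{\bf k}$ gives
\[
\mathcal{L}_0(g_{\bf k})=\min_{\bf s}\mathcal{L}_0(H_{f,L_{\bf s},M_{\bf k}}).
\]

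For part 1 I would argue by exhibiting a curve. If $\dim_0 V(f)\ge q+1$, then for every $\bf k$ the germ $V(f)\cap V(M_{\bf k})$ has dimension at least $1$ at $0$, because $V(M_{\bf k})$ is a linear subspace of codimension $q$. Pick a non-trivial analytic parametrization $\gamma\colon(\cc,0)\to V(f)\cap V(M_{\bf k})$ and set $a=\min_{\gamma_i\not\equiv 0}\ord_t\gamma_i$, so that $|\gamma(t)|\asymp|t|^a$. Since $f\equiv 0$ and $M_{\bf k}\equiv 0$ on $\gamma$, the composition $L_{\bf s}\circ(f,M_{\bf k})\circ\gamma$ vanishes identically and
\[
H_{f,L_{\bf s},M_{\bf k}}(\gamma(t))=(\gamma_1(t)^{d^n+1},\ldots,\gamma_n(t)^{d^n+1}),
\]
whose modulus is $\asymp|t|^{(d^n+1)a}$. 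Therefore $|H_{f,L_{\bf s},M_{\bf k}}(\gamma(t))|/|\gamma(t)|^\nu\to 0$ as $t\to 0$ for every $\nu<d^n+1$, giving $\mathcal{L}_0(H_{f,L_{\bf s},M_{\bf k}})\ge d^n+1$ for all admissible $\bf s,\bf k$.

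For part 2 I would produce a single $\bf k$ for which $g_{\bf k}$ is finite at $0$ and then invoke Proposition~\ref{p2}. Let $C=C_0(V(f))$ be the tangent cone; by Fact~\ref{factpropertiestangcone} $\dim C\le q$, and a local application of Lemma~\ref{Fact3}, Corollary~\ref{factdeltaofacone} and Fact~\ref{factdegreeoftangcone} yields $\delta(C)\le d^n$. Lemma~\ref{arange0} applied to the independent system $M_1,\ldots,M_{\ell_M}$ on $\cc^n$ then furnishes $\bf k=(k_1,\ldots,k_q)$ with $V(M_{\bf k})\cap C=\{0\}$. A routine rescaling argument (any sequence in $V(f)\cap V(M_{\bf k})$ converging to $0$ produces, after normalisation, a unit vector in $C\cap V(M_{\bf k})$, which is excluded) upgrades this to $V(M_{\bf k})\cap V(f)=\{0\}$ as germs at $0$. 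Hence $g_{\bf k}$ is finite at $0$ with $\deg g_{\bf k}=d$, Proposition~\ref{p2} gives $\mathcal{L}_0(g_{\bf k})\le d^n$, and the identification from the first paragraph produces $\min_{\bf s}\mathcal{L}_0(H_{f,L_{\bf s},M_{\bf k}})\le d^n$.

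The step I expect to be the main obstacle is the degree bound $\delta(C_0(V(f)))\le d^n$: although $V(f)$ is defined by $m$ polynomials of degree $\le d$ and could therefore have total degree as large as $d^m$ globally, only the \emph{local} behaviour at $0$ is relevant. The bound should follow by first replacing $f_1,\ldots,f_m$ by $n-\dim_0 V(f)$ generic linear combinations whose zero set has the same top-dimensional part as $V(f)$ near $0$ (Lemma~\ref{Fact3}), and then invoking Corollary~\ref{factdeltaofacone} to bound the resulting cone by $d^{n-\dim_0 V(f)}\le d^n$. Once this local estimate is in place, the remainder of the argument reduces cleanly to the machinery of Lemma~\ref{arange0}, Corollary~\ref{reduct11}, and Proposition~\ref{p2}.
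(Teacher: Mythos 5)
Your Part 1 is correct: picking a curve germ in $V(f)\cap V(M_{\bf k})$ (which has positive dimension at $0$ whenever $\dim_0 V(f)\ge q+1$), composing with $H_{f,L_{\bf s},M_{\bf k}}$, and reading off the order $d^n+1$ gives $\mathcal{L}_0(H_{f,L_{\bf s},M_{\bf k}})\ge d^n+1$ for every $\bf s,\bf k$, exactly as needed.

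For Part 2 your overall strategy (produce $\bf k$ so that $g_{\bf k}=(f,M_{\bf k})$ is finite at $0$, then pass to $\mathcal{L}_0(g_{\bf k})\le d^n$) is the natural one, but two points need repair. First, you invoke Corollary~\ref{reduct11} for the map $g_{\bf k}\colon\cc^n\to\cc^{m+q}$, yet that corollary requires $d^n\bigl((m+q)-n\bigr)+n$ independent linear forms in $\mathbb{L}(m+q,1)$, whereas the proposition supplies only $\ell_L=d^n(m-n)+n$; your identification $\mathcal{L}_0(g_{\bf k})=\min_{\bf s}\mathcal{L}_0(H_{f,L_{\bf s},M_{\bf k}})$ is therefore not justified as written. (In fact for the inequality $\le d^n$ you do not need the equality of Corollary~\ref{reduct11} at all: it would be enough to exhibit a single $\bf s$ with $L_{\bf s}\circ g_{\bf k}\colon\cc^n\to\cc^n$ finite at $0$, apply Proposition~\ref{p2} directly to this degree-$\le d$ map to get $\mathcal{L}_0(L_{\bf s}\circ g_{\bf k})\le d^n$, and then Proposition~\ref{p1} to absorb the order-$(d^n+1)$ perturbation. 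But the obstacle is the same: to guarantee that some $\bf s$ from the given $\ell_L$ forms makes $L_{\bf s}\circ g_{\bf k}$ finite, one needs to cut the $n$-dimensional cone $C_0(\overline{g_{\bf k}(\cc^n)})\subset\cc^{m+q}$ of total degree $\le d^n$ to $\{0\}$ by $n$ of the $L_j$, and Lemma~\ref{arange0} again asks for $d^n\bigl((m+q)-n\bigr)+n$ forms. This discrepancy should at least be flagged, as it may reveal a typo in the statement.) Second, the degree bound $\delta\bigl(C_0(V(f))\bigr)\le d^n$, which you correctly identify as the main obstacle, is not delivered by the ingredients you cite: Lemma~\ref{Fact3} bounds the degree of the top-dimensional part of the \emph{global} set $V(f)$ of dimension $\dim V(f)$, which may exceed $\dim_0 V(f)$, so the components of $V(f)$ through $0$ need not be among those controlled by that lemma. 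One has to carry out a genuinely local version of the genericity argument (choosing $n-\dim_0 V(f)$ generic linear combinations whose common zero germ at $0$ has dimension $\dim_0 V(f)$ and contains the germ of $V(f)$) and then handle the possibility that $C_0(V(f))$ has lower-dimensional components that are not components of the tangent cone of this generic complete-intersection germ. These steps are plausible but not routine, and they are exactly what your proof currently leaves unverified.
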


%\bibliography{bibliografia}{}
%\bibliographystyle{plain}

\end{document}